\documentclass[preprint,3p]{elsarticle}

\usepackage{geometry}
\date{}
\makeatletter
\def\ps@pprintTitle{%
  \let\@oddhead\@empty
  \let\@evenhead\@empty
  \let\@oddfoot\@empty
  \let\@evenfoot\@empty
}
\makeatother

\usepackage{xcolor}

\usepackage{amsmath}
\usepackage{amssymb}
\usepackage{amsfonts}
\usepackage{amsthm}
\usepackage{mathtools} 
\newtheorem{corollary}{Corollary}
\newtheorem{theorem}{Theorem}

\newtheorem{remark}{Remark}

\usepackage{enumerate}

\usepackage{graphicx}
\usepackage{subcaption}

\usepackage{hyperref}
\hypersetup{
    colorlinks=true, 
    urlcolor=blue,   
    linkcolor=blue,  
    citecolor=blue   
}

\usepackage[capitalize]{cleveref} 

\usepackage{natbib}

\usepackage{algorithm}
\usepackage{algpseudocode}

\usepackage[markup=underlined]{changes}
\definechangesauthor[name={Alejandro}, color=blue]{AS}

\newcommand{\bfq}{\boldsymbol{q}}
\newcommand{\bfx}{\boldsymbol{x}}
\newcommand{\bfPhi}{\boldsymbol{\Phi}}

\DeclareMathOperator{\Div}{div}

\newcommand{\btheta}{\boldsymbol{\theta}}

\newcommand{\vertiii}[1]{{\left\vert\kern-0.25ex\left\vert\kern-0.25ex\left\vert #1 
    \right\vert\kern-0.25ex\right\vert\kern-0.25ex\right\vert}}

\usepackage{tikz}

\usetikzlibrary{positioning, arrows.meta, decorations.pathreplacing, calc}
\usepackage{etoolbox} 
\usepackage{listofitems} 
\tikzset{>=latex} 
\colorlet{myred}{red!80!black}
\colorlet{myblue}{blue!80!black}
\colorlet{mygreen}{green!60!black}
\colorlet{mydarkred}{myred!40!black}
\colorlet{mydarkblue}{myblue!40!black}
\colorlet{mydarkgreen}{mygreen!40!black}

\tikzstyle{node}=[very thick,circle,draw=myblue,minimum size=22,inner sep=0.5,outer sep=0.6]
\tikzstyle{connect}=[->,thick,mydarkblue,shorten >=1]

\tikzset{ 
  node 1/.style={node,mydarkgreen,draw=mygreen,fill=white},
  node 2/.style={node,mydarkblue,draw=myblue,fill=white},
  node 3/.style={node,mydarkred,draw=myred,fill=white},
}

\tikzset{
  tanhicon/.pic={
    \draw[gray, thick, -{Latex[length=3pt]}] (-0.4,0) -- (0.4,0);
    \draw[gray, thick, -{Latex[length=3pt]}] (0,-0.4) -- (0,0.4);
    \draw[very thick, red!70!black, rounded corners=0.5pt] 
      plot[smooth, samples=150, domain=-0.3:0.3] (\x,{0.4*tanh(7*\x)});
  }
}

\usepackage{changes}
\definechangesauthor[name=DP, color=red]{DP}
\definechangesauthor[name=CU, color=blue]{CU}
\definechangesauthor[name=PS, color=orange]{PS}
\definechangesauthor[name=ADS, color=gray]{ADS}

\begin{document}

\begin{frontmatter}

\title{Robust Deep FOSLS for Transmission Problems}

\author[label1]{Alejandro Duque-Salazar}
\ead{alejandro.duque@ehu.eus}

\author[label2]{Paulina Sep\'ulveda}
\ead{paulina.sepulveda@pucv.cl}

\author[label3,label4]{Carlos Uriarte}
\ead{curiarte@bcamath.org}
\ead{carlos.uriarte@curtin.edu.au}

\author[label5]{Jamie M. Taylor}
\ead{jamie.taylor@cunef.edu}

\author[label1,label3,label6]{David Pardo}
\ead{david.pardo@ehu.eus}
\ead{dpardo@bcamath.org}

\address[label1]{University of the Basque Country (UPV/EHU), Leioa, Spain}
\address[label2]{Pontificia Universidad Católica de Valparaíso (PUCV), Chile}
\address[label3]{Basque Center for Applied Mathematics (BCAM), Bilbao, Spain}
\address[label4]{Curtin University, Perth, Australia}
\address[label5]{Department of Mathematics, CUNEF Universidad, Madrid, Spain}
\address[label6]{Basque Foundation for Science (Ikerbasque), Bilbao, Spain}

\begin{abstract}
This work presents a robust, energy-based deep learning framework for solving transmission problems in heterogeneous media, including cases with discontinuous material scenarios. We introduce a weighted First-Order System Least-Squares (FOSLS) formulation involving an energy-norm Poincar\'e constant and prove its equivalence to a natural energy norm of the underlying equations, with constants independent of material parameters. As a result, the optimization landscape remains aligned with a meaningful error approximation even under high material contrast, where standard neural network losses often deteriorate. We further prove that the FOSLS formulation, together with its integral-loss representation, exhibits a \emph{passive variance reduction} property, whereby the gradient variance progressively decreases as the loss diminishes, in contrast to methods such as VPINNs and Deep Ritz. 
From a numerical standpoint, we adopt a reduced-order perspective by constructing a low-dimensional space described by a neural network. The optimal coefficients are computed via a least-squares solver, and the space is subsequently improved through gradient-based updates.
By selecting the activation function $\operatorname{ReQU}$, the method mitigates the spurious overshoots typically observed in smooth networks when approximating discontinuities. Numerical experiments in 1D and 2D interface settings corroborate these findings.

\end{abstract}

\begin{keyword}
{Neural Networks; Deep Learning; Transmission Problems; First-order System Least Squares; Least-Squares; Robust numerical methods}
\end{keyword}

\end{frontmatter}

\section{Introduction}

Transmission problems in heterogeneous media occur in many applications, including heat conduction and wave propagation. Partial differential equations (PDEs) describe these phenomena, and their coefficients encode media properties. Where materials are discontinuous, coefficients vary and reduce PDE solution regularity, creating challenges for numerical methods \cite{bonito2013adaptive}. Capturing the solution accurately near material interfaces is essential. Discontinuous material coefficients may cause jumps in the gradient or tangential flux component, leading standard numerical approaches to spurious oscillations or accuracy loss \cite{gottlieb1997gibbs}. These challenges make alternative methods necessary to solve such PDEs accurately.

Physics-Informed Neural Networks (PINNs) \cite{raissi2019physics,jiao2021rate,cai2021physics} approximate the solution of the PDE via a neural network by minimizing a loss function that inherently relies on the strong form of the equation. However, this strong formulation imposes restrictive regularity requirements on the target solution. In problems with discontinuous coefficients, the residual can exhibit Dirac  delta-type singularities, violating the $L^2$-regularity assumption of the residual and making the strong form unsuitable \cite{taylor2023deep}. Consequently, these problems naturally motivate the use of weak or variational formulations.  

A mathematically natural way to relax these regularity requirements is to move from strong to weak formulations based on variational principles. This has motivated the development of Variational Physics-Informed Neural Networks (VPINNs) \cite{kharazmi2019variational,berrone2022variational, rojas2024robust} and Deep Ritz methods \cite{yu2018deep, uriarte2023deep}, which seek approximate solutions by minimizing functionals arising from weak formulations of the PDE. These approaches are attractive because they are consistent with classical finite element theory and naturally accommodate discontinuous coefficients and weak solutions. However, a major practical challenge remains in optimizing the resulting ariational integrals:  existing VPINNs and Deep Ritz formulations lack  a {\emph{ passive variance reduction}} property during training. In practice, the variance of the stochastic loss gradient often remains large even as the approximation improves, since quadrature noise is not inherently coupled to the PDE error. This can affect convergence unless  an excessively large number of quadrature points are used for approximating the gradient. 

To overcome these challenges, we turn to the First-Order System Least Squares (FOSLS) methodology, which reformulates the second-order PDE as an equivalent system of first-order equations \cite{cai1994first, bochev2009least,munzenmaier2011first}. 
As a result, each equation contributes an $L^2$-residual, avoiding the need to evaluate higher derivatives of the solution. Its components remain in their natural spaces $H^1$ for the primal variable and $H(\Div)$ for the flux.  
From a physical standpoint, the first-order formulation is especially meaningful because it treats the primary variable and its associated flux as independent functions. This separation aligns with fundamental physics laws such as Fourier's law of heat conduction \cite{fuhrer2021space} or Darcy’s law in porous media \cite{bochev2008locally}, where the flux represents a measurable physical quantity. Furthermore, the first-order system makes conservation principles explicit through balance equations, providing a more transparent representation of the underlying physics. In heterogeneous and composite materials, this formulation naturally accommodates discontinuities in material properties and allows for a physically consistent treatment of interface conditions across material boundaries \cite{doi:10.1137/S0036142903427688,doi:10.1137/S003614290342769X}. Another advantage of the FOSLS formulation is that it exhibits a passive variance reduction property: as the neural approximation approaches the exact solution, the variance in the stochastic approximation of the gradient of the loss naturally decreases, which accelerates stochastic gradient descent-based optimization.

While recent “Deep FOSLS” methods have explored this approach \cite{cai2020deep,bersetche2023deep,meissner2025deep, qiu2025variationally, 10.1093/imanum/draf073, castillo2025dpg}, a limitation remains: the equivalence constants relating the loss functional to the true error are influenced by physical coefficients, which may reduce accuracy in problems with high material contrast: the loss and the true error effectively decouple, so that a small loss no longer guarantees an accurate solution. To overcome this, we introduce a physically meaningful energy norm for the solution and its flux, along with a loss functional for the residual that is both physically motivated and mathematically balanced through the use of the energy-norm Poincaré constant. As a result, we prove that the equivalence constants  between the error and the loss become independent of the physical coefficients, ensuring robust convergence even in the presence of extreme parameter discontinuities.

Our method proposes a hybrid solver-in-the-loop strategy that employs a vector-valued neural network as an adaptive generator of low-dimensional approximation spaces in the spirit of reduced-order modeling \cite{quarteroni2015reduced}. The optimal solution is computed on these spaces via a linear least-squares solver, while the spaces themselves are updated using gradient-based optimization. That is, we aim to find a low-dimensional subspace so that its least-squares minimizer ---which may be readily computed--- is an accurate solution to our PDE.

Discrete neural approximations of PDE solutions may suffer from the quasi-Gibbs phenomenon, namely, localized oscillations that appear near discontinuities in the trained approximation when globally smooth neural networks attempt to represent non-smooth solutions through extremely steep transition layers  \cite{gottlieb1997gibbs, cai2021least}. Although the network remains smooth, these narrow layers can produce spurious overshoots or undershoots in the solution or its gradient near material interfaces, an effect that becomes particularly pronounced in high-contrast transmission problems and when highly regular activation functions such as $\tanh$ are employed. Several approaches have been proposed to alleviate this behavior. Adaptive sampling and integration-based strategies concentrate collocation points in regions with sharp variations, but they do not modify the intrinsic approximation space of the network and may therefore remain susceptible to quasi-Gibbs oscillations  \cite{gao2023failure}. 
Other strategies rely on localized refinement or material-dependent activations, mimicking adaptive finite-element techniques, yet they often require additional error estimators and may compromise the scalability of neural network approaches \cite{taylor2024adaptive, badia2025adaptive, sarma2024interface}. In contrast, our work addresses quasi-Gibbs behavior by choosing $\operatorname{ReQU}$ as the activation function. This choice is informed by approximation results showing that ReLU networks can represent discontinuities without exhibiting Gibbs-type oscillations \cite{cai2021least, cai2024least}. In the one-dimensional, single-hidden-layer case, this property admits an equivalent interpretation in terms of approximating jump discontinuities in the derivative. Guided by this observation, we improve the approximation of transmission problems without resorting to domain decomposition or local refinement strategies.

Despite the advantages, the current approach presents several challenges. First, the formulation introduces a new vector variable, increasing the number of unknowns. In practice, however, this extra cost is moderate in neural network settings, since the solution is represented through low-dimensional parametric representations. 
Second, we have not explored the use of different neural networks for the solution and the flux, and since these variables belong to different functional spaces, they may exhibit different approximation requirements near discontinuities. We leave the possibility of employing multiple networks for future work.
Finally, other architectural choices, hyperparameter settings, and the hybrid use of different losses for updating the spaces remain to be studied. Addressing these aspects represents a natural direction for future research.

The remainder of this manuscript is organized as follows.
\cref{sec:section2} establishes the theoretical foundation of the framework by introducing a weighted FOSLS functional and proving its robustness equivalence to the energy norm of the error.
\cref{sec:discrete} describes the finite-dimensional approximation, a procedure for estimating the energy-norm Poincaré constant on the discrete spaces required for the weighted functional, and a strategy for updating these spaces.
\cref{sec:section3} presents a neural network architecture that induces discrete approximation spaces, and discusses their initialization and regularity properties.
\cref{sec:integration} discusses the discretization of the loss functional through  stochastic integration, and states a theoretical result on passive variance reduction.
Finally, \cref{sec:section6} presents numerical experiments in one and two dimensions that validate the proposed methodology, compare it with standard non-robust schemes, illustrating its advantages over Ritz formulations and tanh-based networks, in particular, by mitigating the quasi-Gibbs phenomenon in problems with discontinuous coefficients, high material contrast, and coarse quadrature rules.

\section{Problem Statement}\label{sec:section2}

Let $\Omega \subset \mathbb{R}^d$, for $d \in \{1,2,3\}$, be a bounded domain with boundary $\partial \Omega$. We consider the diffusion equation with homogeneous Dirichlet boundary conditions given by the following formulation: find $u$ such that
\begin{equation}\label{eq: diffusion}
    \left\{ \begin{array}{rcll}
         -\Div (\kappa \nabla u) &=& f& \text{in } \Omega, \\
         u &=&0 & \text{on } \partial \Omega,
    \end{array} \right.
\end{equation}
where $\kappa \in L^{\infty}(\Omega)$ satisfies $\kappa\geq\kappa_0>0$ 
in $\Omega$ and  \(f \in L^2(\Omega) \). 

\begin{remark}
In the case of inhomogeneous Dirichlet boundary conditions, let ${u}$ denote the corresponding solution. Introducing a lift $u_D$ of the boundary data, i.e., a sufficiently regular function coinciding with the prescribed values on $\partial \Omega$, the decomposition $u - u_D$ reduces the problem to the homogeneous form \eqref{eq: diffusion}, with modified source term.
\end{remark}

Let  $H^1_0(\Omega):= \{ u\in L^2(\Omega):  \nabla u \in (L^2(\Omega))^d, u|_{\partial \Omega} = 0\}$. The weak formulation of problem~\eqref{eq: diffusion} reads as: find $u \in H_0^1(\Omega)$ such that
\begin{equation}\label{eq: weak_form}
 b(u,v):=\int_\Omega \kappa\nabla u\cdot\nabla v \, = \int_\Omega f v=:\ell(v), \qquad \forall v \in H^1_0(\Omega).
\end{equation}

The Lax-Milgram theorem~\cite{Ern2021} guarantees the existence and uniqueness of the solution of~\eqref{eq: weak_form}. Since the bilinear form $b$ is symmetric and positive definite, it induces an \emph{energy-type} inner-product and norm in $H^1_0(\Omega)$ defined as
\begin{equation}\label{eq: energy_norm}
(u,v)_{H^1_{0,\kappa}} := b(u,v) = \int_\Omega \kappa\nabla u\cdot\nabla v,\qquad\|u\|_{H^1_{0,\kappa}}^2 := b(u,u)= \int_\Omega \kappa |\nabla u|^2,\qquad u,v\in H^1_0(\Omega).
\end{equation}
This norm is more physically and mathematically meaningful than the standard $\kappa$-independent norms in $H^1(\Omega)$ and $H^1_0(\Omega)$, namely,
\begin{align*}
    \Vert\cdot\Vert_{H^1} := (\Vert\cdot\Vert_{L^2}^2+\Vert\nabla\cdot\Vert_{L^2}^2)^{1/2},\qquad
    \Vert\cdot\Vert_{H^1_0} := \Vert\nabla\cdot\Vert_{L^2}.
\end{align*}
 
For instance, when Eq.~\eqref{eq: diffusion} models the steady heat equation, where $u$ denotes the temperature field and $\kappa$ the thermal conductivity, the energy norm $\Vert\cdot\Vert_{H^1_{0,\kappa}}$ represents the total energy dissipated by heat conduction in maintaining the steady state. In this sense, it plays an analogous role to mechanical work in continuum mechanics, where the relevant energy norm quantifies the work required to sustain a deformation.

\subsection{First-Order System Least Squares (FOSLS)}\label{sec: FOSLS}
Let $\bfq := - \kappa \nabla u$ denote the flux. The diffusion problem \eqref{eq: diffusion} can be equivalently rewritten as the first-order system of equations
\begin{equation}\label{eq: FOS_PDE}
    \left\{\begin{array}{rcll}
         \Div \bfq &=& f,& \text{in } \Omega, \\
         \bfq + \kappa \nabla u&=&  0,& \text{in } \Omega, \\
         u &=&0, & \text{on } \partial \Omega .
    \end{array}\right.
\end{equation}
Since $f\in L^2(\Omega)$, the search spaces are naturally given by 
\begin{equation*}
    u \in H^1_0(\Omega),\qquad \bfq \in H(\Div; \Omega) := \{ \boldsymbol{q} \in (L^2(\Omega))^d:  \Div \bfq \in L^2(\Omega)\}.
\end{equation*}
For any $(u,\bfq)$ in the product space $H_0^1(\Omega) \times H(\Div;\Omega)$, the mathematical norm is often given by (see, e.g., \cite{cai1994first}) 
\begin{equation}\label{eq:standard_norm}
\| (u,\bfq) \|_{H^1_{0} \times H(\Div)} := \left(\|u\|^2_{H_{0}^1} + \| \bfq\|_{(L^2)^d}^2   +  \| \Div \bfq\|_{L^2}^2\right)^{1/2}.
\end{equation}

However, this norm has two limitations. First, because 
it does not incorporate the energy norm introduced in Eq.~\eqref{eq: energy_norm}. Second, for the exact solution $(u^*,\bfq^*)$ of~\eqref{eq: FOS_PDE},
the equivalence constants between $\|u^*\|_{H_0^1}$ and $\|\bfq^*\|_{L^2}$ are $\kappa$-dependent and possibly large. 

To overcome this, we introduce the following weighted norm in $H_0^1(\Omega)\times H(\Div;\Omega)$:
\begin{equation}\label{eq: error_norm}
\vertiii{(u,\bfq)}_\kappa := \left(\|u\|^2_{H_{0,\kappa}^1} + \| \bfq\|_{(L_{\kappa}^2)^d}^2   + C \| \Div \bfq\|_{L^2}^2\right)^{1/2},
\end{equation} where $C>0$ is a weighting coefficient that we will determine, $\|\cdot\|_{H^1_{0,\kappa}}$ is the energy norm defined in~\eqref{eq: energy_norm}, and $\|\cdot\|_{(L_{\kappa}^2)^d}$ is a weighted $L^2$-norm for the flux defined by 
\[\|\bfq\|_{(L_{\kappa}^2)^d} := \|\kappa^{-1/2} \bfq\|_{(L^2)^d}.\]

\subsection{Continuous loss functional}
We propose the following weighted functional associated with the first-order system given by Eq.~\eqref{eq: FOS_PDE}:
\begin{equation}\label{eq: loss}
\mathcal{L}(u,\bfq):= \Vert\kappa^{-1/2}\bfq+\kappa^{1/2}\nabla u\Vert^2_{(L^2)^d} + C_{\mathcal{L}}\Vert\Div\bfq-f\Vert^2_{L^2},
\end{equation}
for some constant $C_{\mathcal{L}}>0$ to be determined. For $C_{\mathcal{L}} = 1$, this reduces to the standard loss proposed in \cite{cai1994first, cai2020deep}, where the authors show that it is equivalent to the standard norm given by Eq. \eqref{eq:standard_norm}, with $\kappa$-dependent equivalence constants. 

Herein, we will show an equivalence of the loss in Eq.~\eqref{eq: loss} with the error measured by the \emph{physically meaningful} norm in Eq.~\eqref{eq: error_norm}, and choose $C$ and $C_\mathcal{L}$ to ensure that the upper- and lower-bound constants are $\kappa$-independent, therefore leading to a robust method.  We state and prove the following theorem.

\begin{theorem}\label{thm: theorem}
Let $C >0$  and $C_{\mathcal{L}}>0$ be the constants defined in~\eqref{eq: error_norm} and~\eqref{eq: loss}, respectively. Then, the loss function in Eq.~\eqref{eq: loss} satisfies:
\begin{equation*}\label{eq: robustness}
c_1\vertiii{ (u,\bfq) - (u^*,\bfq^*)}_\kappa^2 \leq \mathcal{L}(u,\bfq) \leq c_2\vertiii{ (u,\bfq) - (u^*,\bfq^*)}_\kappa^2,
\end{equation*}
with
\[
c_1 = \min \left\{\frac{1}{8}, \frac{C_{\mathcal{L}}}{6(C^P_{\kappa})^2 + {C}}\right\}, \qquad c_2 = 2 \max\left\{1, \frac{C_{\mathcal{L}}}{2C}\right\},
\]
where $C_\kappa^P$ is the energy-norm Poincar\'e constant 
defined as:
\[
C_\kappa^P := \sup_{0\neq v \in H_0^1(\Omega)} \frac{\|v\|_{L^2}}{\|v\|_{H_{0,\kappa}^1}}.
\]
\end{theorem}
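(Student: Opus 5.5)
Throughout, write $e := u - u^*$ and $\boldsymbol{\epsilon} := \bfq - \bfq^*$. Since $(u^*,\bfq^*)$ solves \eqref{eq: FOS_PDE}, we have $\Div\bfq^* = f$ and $\kappa^{-1/2}\bfq^* + \kappa^{1/2}\nabla u^* = 0$. Hence the loss depends only on the error:
\[
\mathcal{L}(u,\bfq) = \|\kappa^{-1/2}\boldsymbol{\epsilon} + \kappa^{1/2}\nabla e\|^2_{(L^2)^d} + C_{\mathcal{L}}\|\Div\boldsymbol{\epsilon}\|^2_{L^2}.
\]
Both bounds then reduce to comparing this quadratic form with $\|\kappa^{1/2}\nabla e\|^2 + \|\kappa^{-1/2}\boldsymbol{\epsilon}\|^2 + C\|\Div\boldsymbol{\epsilon}\|^2$.

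\textbf{Upper bound.} Apply $\|a+b\|^2 \le 2\|a\|^2 + 2\|b\|^2$ to the first term. This gives
\[
\mathcal{L} \le 2\|e\|^2_{H^1_{0,\kappa}} + 2\|\boldsymbol{\epsilon}\|^2_{(L^2_\kappa)^d} + \tfrac{C_{\mathcal{L}}}{C}\, C\|\Div\boldsymbol{\epsilon}\|^2,
\]
which is bounded by $2\max\{1, C_{\mathcal{L}}/(2C)\}\,\vertiii{(e,\boldsymbol{\epsilon})}_\kappa^2$. This is exactly $c_2$.

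\textbf{Lower bound.} This is the main step. Expand the cross term:
\[
\|\kappa^{-1/2}\boldsymbol{\epsilon} + \kappa^{1/2}\nabla e\|^2 = \|\kappa^{-1/2}\boldsymbol{\epsilon}\|^2 + \|\kappa^{1/2}\nabla e\|^2 + 2\int_\Omega \boldsymbol{\epsilon}\cdot\nabla e .
\]
Since $e \in H^1_0(\Omega)$, integration by parts gives $\int_\Omega \boldsymbol{\epsilon}\cdot\nabla e = -\int_\Omega e\,\Div\boldsymbol{\epsilon}$. The energy-norm Poincaré constant then yields
\[
\Bigl|\int_\Omega \boldsymbol{\epsilon}\cdot\nabla e\Bigr| \le C^P_\kappa\,\|e\|_{H^1_{0,\kappa}}\,\|\Div\boldsymbol{\epsilon}\|_{L^2}.
\]
This estimate is $\kappa$-robust because $C^P_\kappa$ absorbs the coefficient.

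Set $A := \|\kappa^{-1/2}\boldsymbol{\epsilon}\|^2 + \|\kappa^{1/2}\nabla e\|^2$ and $D := \|\Div\boldsymbol{\epsilon}\|^2$. Young's inequality gives
\[
2C^P_\kappa \|e\|_{H^1_{0,\kappa}} \|\Div\boldsymbol{\epsilon}\| \le \delta\|e\|^2_{H^1_{0,\kappa}} + \delta^{-1}(C^P_\kappa)^2 D .
\]
Taking $\delta$ close to 1 would lose too much of $A$, so instead I would use a convex-combination argument. Write $\mathcal{L} = \theta\mathcal{L} + (1-\theta)\mathcal{L}$. In the first piece use only $\mathcal{L} \ge C_{\mathcal{L}} D$. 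In the second piece use
\[
\mathcal{L} \ge \tfrac12 A - 2(C^P_\kappa)^2 D + C_{\mathcal{L}} D ,
\]
which follows from Young's inequality with $\delta = 1/2$. Alternatively, bound the residual directly: $\|\kappa^{1/2}\nabla e\| \le \|r\| + \|\kappa^{-1/2}\boldsymbol{\epsilon}\|$ with $r := \kappa^{-1/2}\boldsymbol{\epsilon} + \kappa^{1/2}\nabla e$, and control $\|\kappa^{-1/2}\boldsymbol{\epsilon}\|$ via $\|\kappa^{-1/2}\boldsymbol{\epsilon}\|^2 = (r, \kappa^{-1/2}\boldsymbol{\epsilon}) + \int_\Omega e\,\Div\boldsymbol{\epsilon}$.

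I expect this second route to produce the constant in the statement. It gives
\[
\|\kappa^{-1/2}\boldsymbol{\epsilon}\|^2 \le \|r\|\,\|\kappa^{-1/2}\boldsymbol{\epsilon}\| + C^P_\kappa \|e\|_{H^1_{0,\kappa}} \|\Div\boldsymbol{\epsilon}\| ,
\]
and, after substituting the bound for $\|e\|_{H^1_{0,\kappa}}$ and applying Young repeatedly,
\[
A \le 8\|r\|^2 + 6(C^P_\kappa)^2 D .
\]

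Finally, add $C D$ to both sides:
\[
\vertiii{\cdot}_\kappa^2 \le 8\|r\|^2 + \bigl(6(C^P_\kappa)^2 + C\bigr) D \le \max\Bigl\{8,\ \tfrac{6(C^P_\kappa)^2 + C}{C_{\mathcal{L}}}\Bigr\}\,\mathcal{L} .
\]
This is exactly $c_1^{-1}$.

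The main obstacle is the bookkeeping in the Young's inequality steps needed to land on the specific constants $8$ and $6$. The structural point, that integration by parts plus $C^P_\kappa$ controls the cross term uniformly in $\kappa$, is straightforward.
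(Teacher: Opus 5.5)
Your upper bound is correct and is the paper's argument. The gap is in the lower bound. You assert that ``substituting the bound for $\|e\|_{H^1_{0,\kappa}}$ and applying Young repeatedly'' gives $A \le 8\|r\|^2 + 6(C^P_\kappa)^2 D$, and this step does not follow. Write $X := \|\kappa^{-1/2}\boldsymbol{\epsilon}\|_{(L^2)^d}$, $Y := \|e\|_{H^1_{0,\kappa}}$, $\rho := \|r\|_{(L^2)^d}$ and $\beta := C^P_\kappa\|\Div\boldsymbol{\epsilon}\|_{L^2}$. Your second route uses only $Y \le \rho + X$ and $X^2 \le \rho X + \beta Y$. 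These two inequalities do not imply $X^2+Y^2 \le 8\rho^2 + 6\beta^2$. With $\rho=\beta=1$, the values $X = 1+\sqrt{2}$ and $Y = 2+\sqrt{2}$ satisfy both with equality, yet $X^2+Y^2 = 9+6\sqrt{2} > 14$. So no bookkeeping lands on $8$ and $6$, and the inequality your final line rests on is asserted, not derived.

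The structural reason is that you apply the energy identity to the flux error. There the Poincar\'e term is controlled by $Y$ rather than $X$, which produces a coupled system. The paper applies it to $e$ instead; this is what its dual-norm characterization of $\|u-u^*\|_{H^1_{0,\kappa}}$ through the residual $r_u$ amounts to. Testing with $v=e$ gives
\[
Y^2 = (r,\kappa^{1/2}\nabla e)_{(L^2)^d} + \int_\Omega e\,\Div\boldsymbol{\epsilon} \le (\rho+\beta)\,Y .
\]
This decouples after dividing by $Y$, so $Y^2 \le 2\rho^2 + 2\beta^2$. Only then does the triangle inequality $X \le \rho + Y$ give $X^2 \le 6\rho^2 + 4\beta^2$, and summing yields exactly $8\rho^2 + 6\beta^2$.

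Alternatively, the first route you abandoned already closes the argument, with a better constant. Your $\delta = 1/2$ estimate is just $\rho^2 = A - 2\int_\Omega e\,\Div\boldsymbol{\epsilon} \ge \tfrac12 A - 2\beta^2$, that is, $A \le 2\rho^2 + 4(C^P_\kappa)^2 D$. Your worry about the sign of $C_{\mathcal{L}} - 2(C^P_\kappa)^2$ comes only from merging the $D$-terms with $C_{\mathcal{L}}D$. Instead, compare with $\mathcal{L} = \rho^2 + C_{\mathcal{L}}D$ only at the end:
\[
\vertiii{(e,\boldsymbol{\epsilon})}_\kappa^2 \le 2\rho^2 + \bigl(4(C^P_\kappa)^2 + C\bigr)D \le \max\Bigl\{2,\ \tfrac{4(C^P_\kappa)^2 + C}{C_{\mathcal{L}}}\Bigr\}\,\mathcal{L}(u,\bfq).
\]
Since $\min\{1/2,\ C_{\mathcal{L}}/(4(C^P_\kappa)^2 + C)\} \ge c_1$, this implies the theorem without any convex-combination argument. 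Halving $A$ costs only a factor $2$, not ``too much''.
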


\begin{proof}
We first establish the upper bound. Applying Young's inequality together with the definition of the weighted norm, we have: 
\begin{align*}
\mathcal{L}(u,\bfq) &\leq 2\| \kappa^{1/2} \nabla u- \kappa^{1/2} \nabla u^* \|_{(L^2)^d}^2+ 2 \| \kappa^{-1/2} \bfq  + \kappa^{1/2} \nabla u^* \|_{(L^2)^d} ^2 +  C_{\mathcal{L}}\Vert\Div\bfq-f\Vert^2_{L^2}, \\&=2\| u- u^* \|_{H^1_{0, \kappa}}^2+ 2 \| \kappa^{-1/2} \bfq  + \kappa^{1/2} \nabla u^* \|_{(L^2)^d} ^2 +  C_\mathcal{L}\Vert\Div\bfq-f\Vert^2_{L^2}.
\end{align*}
The exact solution satisfies $ \kappa^{1/2} \nabla u^* = -\kappa^{-1/2} \bfq^*$  and $\Div \bfq^* = f$. Thus,
\begin{equation*}\label{eq: c2}
\begin{split}
    \mathcal{L}(u,\bfq) &\leq  2\| u- u^* \|_{H^1_{0, \kappa}}^2+  2\|\bfq - \bfq^*\|_{(L^2_{\kappa})^d}^2 + C_{\mathcal{L}}\Vert\Div\bfq-\Div \bfq^*\Vert^2_{L^2},\\
    & \leq 2 \max\left\{1, \frac{C_{\mathcal{L}}}{2C}\right\}\vertiii{(u,\bfq)-(u^*,\bfq^*)}_\kappa^2.
\end{split}
\end{equation*}
As a result, we consider $c_2 =  2 \max\left\{1, \tfrac{C_{\mathcal{L}}}{2C}\right\}$.

Next, for computing $c_1$ we consider bounds for each term of the energy norm of the error. First, we recall the equivalence between the energy norm of the error $u-u^*$ and the norm of the residual $r_u\in H^{-1}(\Omega):=(H^1_0(\Omega))^*$ defined by
\[
r_u(v) := b(u,v) - \ell(v), \qquad v \in H_0^1(\Omega).
\]
Specifically,
\begin{equation}\label{eq: energy_error_residual}
\|u-u^*\|_{H^1_{0,\kappa}}=\|r_u\|_{({H^1_{0,\kappa}})^*}=\sup\limits_{0\neq v\in H^1_0(\Omega)}{\frac{1}{\|v\|_{H^1_{0,\kappa}}} \int_\Omega \kappa\nabla u\cdot\nabla v-fv\,}.
\end{equation} We emphasize that we are equipping $H^{-1}(\Omega)$ with the dual norm induced via the energy norm in \eqref{eq: energy_norm}. An estimate of this term can be obtained by bounding the last integral in \eqref{eq: energy_error_residual}. To this end, we add and subtract $\bfq \cdot \nabla v$ and then integrate by parts. Then, for all $v\in H^1_0(\Omega)$,
\begin{equation*}
\begin{split}
\left(\int_\Omega \kappa\nabla u\cdot\nabla v-fv\, \right)^2 &=\left(\int_\Omega (\kappa^{-1/2} \bfq  + \kappa^{1/2} \nabla u)\cdot\kappa^{\frac{1}{2}}\nabla v-\bfq\cdot\nabla v-fv\,\right)^2\\
&= \left(\int_\Omega (\kappa^{-1/2} \bfq  + \kappa^{1/2} \nabla u)\cdot\kappa^{\frac{1}{2}}\nabla v\,+ \Div(\bfq) v-fv\, \right)^2.
\end{split}
\end{equation*}
Applying Young's inequality, followed by Cauchy-Schwarz's and the energy-norm Poincar\'{e} ($\|\cdot \|_{L^2}  \leq C^P_{\kappa} \|\cdot \|_{H^1_{0,\kappa}}$) inequalities, we obtain: for all $v\in H^1_0(\Omega)$,
\begin{align}\label{sec: residual_squared}
\left(\int_\Omega \kappa\nabla u\cdot\nabla v-fv\, \right)^2  &\leq 2\left(\int_\Omega (\kappa^{-1/2} \bfq  + \kappa^{1/2} \nabla u)\cdot\kappa^{\frac{1}{2}}\nabla v\,\right)^2+ 2\left(\int_\Omega \Div(\bfq) v-fv\, \right)^2, \nonumber\\
&\leq 2\|\kappa^{-1/2} \bfq  + \kappa^{1/2} \nabla u\|_{(L^2)^d}^2\|v\|_{H^1_{0,\kappa}}^2+2\|\Div(\bfq)-f\|_{L^2}^2\|v\|_{L^2}^2,\nonumber\\
&\leq 2\|\kappa^{-1/2} \bfq  + \kappa^{1/2} \nabla u\|_{(L^2)^d}^2\|v\|_{H^1_{0,\kappa}}^2+2(C^P_{\kappa})^2\|\Div(\bfq)-f\|_{L^2}^2\|v\|_{H^1_{0,\kappa}}^2. 
\end{align} Using the upper bound~\eqref{sec: residual_squared} in \eqref{eq: energy_error_residual}, we arrive at
\begin{equation}\label{eq: u_error_estimation}
\|u-u^*\|_{H^1_{0,\kappa}}^2 \leq  2\|\kappa^{-\frac{1}{2}}\bfq+\kappa^\frac{1}{2}\nabla u\|_{(L^2)^d}^2 + 2(C^P_{\kappa})^2\|\Div(\bfq)-f\|_{L^2}^2.
\end{equation}
Analogously, we estimate the error of the flux in the corresponding weighted norm. Applying Young's inequality together with \eqref{eq: u_error_estimation}, we obtain
\begin{align}\label{eq: q_error_estimation}
\|\bfq-\bfq^*\|_{(L_{\kappa}^2)^d}^2&= \|\bfq+\kappa \nabla u^*\|_{(L_{\kappa}^2)^d}^2, \nonumber\\
&\leq 2\|\bfq+\kappa\nabla u\|_{(L_{\kappa}^2)^d}^2+2\|\kappa\nabla u - \kappa\nabla u^*\|_{(L_{\kappa}^2)^d}^2, \nonumber\\
&= 2\|\kappa^{-\frac{1}{2}}\bfq+\kappa^\frac{1}{2}\nabla u\|_{(L^2)^d}^2+2\|u - u^*\|_{H^1_{0,\kappa}}^2, \nonumber\\
&\leq 2\|\kappa^{-\frac{1}{2}}\bfq+\kappa^\frac{1}{2}\nabla u\|_{(L^2)^d}^2+4\left(\|\kappa^{-\frac{1}{2}}\bfq+\kappa^\frac{1}{2}\nabla u\|_{(L^2)^d}^2+(C^P_{\kappa})^2\|\Div(\bfq)-f\|_{L^2}^2\right), \nonumber\\
&= 6\|\kappa^{-\frac{1}{2}}\bfq+\kappa^\frac{1}{2}\nabla u\|_{(L^2)^d}^2+4(C^P_{\kappa})^2\|\Div(\bfq)-f\|_{L^2}^2.
\end{align}
Combining  \eqref{eq: u_error_estimation} and \eqref{eq: q_error_estimation}, the  bound for the energy norm of the error reads as follows:
\begin{equation*}\label{eq: c1}
\begin{split}
\vertiii{(u,\bfq) - (u^*,\bfq^*)}_\kappa^2  &\leq  8\|\kappa^{-\frac{1}{2}}\bfq+\kappa^\frac{1}{2}\nabla u\|_{L^2}^2 + (6(C^P_{\kappa})^2 + { C})\|\Div(\bfq)-f\|_{L^2}^2,\\
&\leq \max \left\{8, \frac{6(C^P_{\kappa})^2 + C}{C_{\mathcal{L}}} \right\} \mathcal{L}(u,\bfq).
\end{split}
\end{equation*}
Therefore, we consider:
\[
c_1 = \min \left\{\frac{1}{8}, \frac{C_{\mathcal{L}}}{6(C^P_{\kappa})^2 + C}\right\}.
\]
\end{proof}
\begin{remark}
This approach can also be readily extended to Neumann conditions by restricting the trial space of the flux to the subspace of $H(\Div)$ functions such that $\bfq \cdot \mathbf{n} =0$ where the Neumann condition is imposed.
\end{remark}
\begin{corollary}\label{cor:1}
 Let $C$ and $C_{\mathcal{L}}$ in Eqs.~\eqref{eq: error_norm} and~\eqref{eq: loss} be defined as:
\begin{equation*}\label{no} 
C := (C_\kappa^P)^2,\qquad C_\mathcal{L} := 2(C_\kappa^P)^2, 
\end{equation*}
Then, the constants $c_1$ and $c_2$ in Theorem~\ref{thm: theorem} are independent of $\Omega$ and $\kappa$. Moreover,  $c_1=1/8$ and $c_2 = 2$, so that $c_2/c_1=16$. 

The corresponding norm and loss take the explicit form
\begin{align}\label{eq: final_norm}
\vertiii{(u,\bfq)}_\kappa^2
&= \Big( \|u\|^2_{H_{0,\kappa}^1} 
   + \|\bfq\|_{(L_{\kappa}^2)^d}^2   
   + (C^P_{\kappa})^2 \|\Div \bfq\|_{L^2}^2 \Big), \\[1ex]
\mathcal{L}(u,\bfq)
&= \|\kappa^{-1/2}\bfq+\kappa^{1/2}\nabla u\|^2_{(L^2)^d} 
   + 2 (C^P_{\kappa})^2 \|\Div \bfq-f\|^2_{L^2}.\label{eq: loss final}
\end{align}
\end{corollary}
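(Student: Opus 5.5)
The corollary follows by substituting the prescribed weights into the constants of Theorem~\ref{thm: theorem}, which we take as given. The plan has three steps: simplify $c_2$, simplify $c_1$, and then write out the norm and loss explicitly.

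\emph{Upper constant.} Set $C=(C^P_\kappa)^2$ and $C_{\mathcal{L}}=2(C^P_\kappa)^2$. Then $C_{\mathcal{L}}/(2C)=1$, so
\[
c_2 = 2\max\{1,1\} = 2 .
\]

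\emph{Lower constant.} With the same choice,
\[
6(C^P_\kappa)^2 + C = 7(C^P_\kappa)^2,
\qquad\text{so}\qquad
\frac{C_{\mathcal{L}}}{6(C^P_\kappa)^2+C} = \frac{2}{7}.
\]
Since $2/7 > 1/8$, the minimum in Theorem~\ref{thm: theorem} is $c_1 = 1/8$. Hence $c_2/c_1 = 16$.

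The cancellation of $C^P_\kappa$ in both ratios is the only substantive point. It works because $C$ and $C_{\mathcal{L}}$ are chosen with the same scaling $(C^P_\kappa)^2$ as the Poincaré term that appears in the lower-bound estimate. This is what makes $c_1$ and $c_2$ independent of $\Omega$ and $\kappa$: the whole dependence on the domain and on the coefficient enters only through $C^P_\kappa$, and $C^P_\kappa$ is finite because $\kappa\ge\kappa_0>0$ and the standard Poincaré inequality holds on the bounded domain $\Omega$.

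\emph{Explicit forms.} Finally, substitute $C=(C^P_\kappa)^2$ into \eqref{eq: error_norm} to obtain \eqref{eq: final_norm}, and $C_{\mathcal{L}}=2(C^P_\kappa)^2$ into \eqref{eq: loss} to obtain \eqref{eq: loss final}. There is no real obstacle here; the only check needed is the comparison $2/7 > 1/8$ in the minimum.
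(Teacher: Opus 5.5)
Your proposal is correct and follows the paper's approach: the paper's proof just says the corollary follows from Theorem~\ref{thm: theorem}. Your substitution ($C_{\mathcal{L}}/(2C)=1$, and $C_{\mathcal{L}}/(6(C^P_\kappa)^2+C)=2/7>1/8$, giving $c_1=1/8$ and $c_2=2$) is exactly the computation the paper leaves implicit.
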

 \begin{proof}
It follows from Theorem~\ref{thm: theorem}.
 \end{proof}
From now on, we define the following norm in $H(\Div)$ as
$$\|\bfq\|^2_{H(\Div, \kappa)}:=\|\bfq\|_{(L_{\kappa}^2)^d}^2 + (C_\kappa^P)^2\|\Div \bfq\|^2_{L^2}.$$
Thus, the energy norm can be expressed as:
\begin{align*}
\vertiii{(u,\bfq)}_\kappa^2
&=  \|u\|^2_{H_{0,\kappa}^1} 
   + \|\bfq\|_{H(\Div,\kappa)}^2. 
\end{align*}

\section{Discrete Subspace Approximation}
\label{sec:discrete}

We seek an approximation of the solution of the continuous problem in
finite-dimensional subspaces
\[
V_{\boldsymbol{\theta}}^u \times V_{\boldsymbol{\theta}}^{\bfq}
\;\subset\;
H_{0}^1 \times H(\mathrm{div}).
\]
Here, 
$V_{\btheta}^u$ and $V_{\btheta}^q$ are the discrete subspaces of $H^1_0$ and $H(\operatorname{div})$ where the approximations $u$ and $\bfq$ are correspondingly sought. The mapping $ \btheta \mapsto V_{\btheta}^u\times V_{\btheta}^q$ denotes a parametrization that defines this space.
The spaces $V_{\btheta}^u$, $V_{\btheta}^{\bfq}$ are defined via a given number of parametrised spanning functions, 
\[
V_{\boldsymbol{\theta}}^u := \mathrm{span}\{\varphi_i\}_{i=1}^{n_u},
\qquad
V_{\boldsymbol{\theta}}^{\bfq} := \mathrm{span}\{\boldsymbol{\tau}_i\}_{i=1}^{n_q}.
\]
Notice that for notational convenience we are omitting the dependence of these functions on $\btheta$. Elements $(u,\bfq)\in V_{\boldsymbol{\theta}}^u \times V_{\boldsymbol{\theta}}^{\bfq}$  are therefore represented as
\begin{align}\label{eq: basis_expansion}
u &= \sum_{i=1}^{n_u} c_i^u \,{\varphi}_i,
\qquad
\bfq = \sum_{i=1}^{n_q} c_i^{\bfq} \,{\boldsymbol{\tau}}_i,
\end{align}
where we collect the coefficients into the vector
$
\boldsymbol{c}:=(\boldsymbol{c}^u,\boldsymbol{c}^{\bfq})\in\mathbb{R}^n,$  $n:=n_u+n_q.
$

\subsection{Continuous loss over a discrete subspace}\label{sec: optimal_coefficients}
 
In terms of the coefficients ${\boldsymbol{c}}$, the quadratic loss of Eq.~\eqref{eq: loss final} can be written as
\begin{equation}
\mathcal{L}(u,\bfq)
=
{\boldsymbol{c}}^\top
\mathbf{H}_{\boldsymbol{\theta}}
{\boldsymbol{c}}
-
2\,{\boldsymbol{c}}^\top  {\boldsymbol{f}}_{\boldsymbol{\theta}}
+
\ell_{\boldsymbol{\theta}},
\label{eq:loss_algebraic}
\end{equation}
where $\mathbf{H}_{\boldsymbol{\theta}}\in\mathbb{R}^{n\times n}$ is symmetric and
positive semidefinite and with the following block decomposition
\[
\mathbf{H}_{\boldsymbol{\theta}}
=
\begin{pmatrix}
\mathbf{H}^{uu} & \mathbf{H}^{u\bfq} \\
(\mathbf{H}^{u\bfq})^\top & \mathbf{H}^{\bfq\bfq}
\end{pmatrix},
\]
with blocks defined by
\begin{align}
(\mathbf{H}^{uu})_{ij}
&=
\int_\Omega
\kappa \,\nabla {\varphi}_i \cdot \nabla {\varphi}_j
 ,
\qquad
(\mathbf{H}^{u\bfq})_{ij}
=
\int_\Omega
\nabla {\varphi}_i \cdot {\boldsymbol{\tau}}_j
 ,  \label{eq:Huu}
\\
(\mathbf{H}^{\bfq\bfq})_{ij} 
&=
\int_\Omega
\kappa^{-1}
\,{\boldsymbol{\tau}}_i \cdot {\boldsymbol{\tau}}_j
+
2\bigl(C_\kappa^P\bigr)^2
\int_\Omega
(\Div {\boldsymbol{\tau}}_i)
(\Div {\boldsymbol{\tau}}_j).\nonumber
\end{align}
The linear and constant terms are given by
\[
(\boldsymbol{f}_{\boldsymbol{\theta}})_j
= 
\begin{cases}
    0, \qquad  1\leq j \leq n_u,\\
    2\bigl(C_\kappa^P\bigr)^2 \int_\Omega f\, \Div {\boldsymbol{\tau}}_{j-n_u}, \qquad n_u + 1 \leq j \leq n
\end{cases},\qquad
\ell_{\boldsymbol{\theta}}
=
2\bigl(C_\kappa^P\bigr)^2
\int_\Omega
f^2.
\]

\paragraph{Least-Squares formulation} 
We are interested in minimizing the loss functional over the discrete spaces, i.e,
\begin{equation*}
(u_{\btheta}, \bfq_{\btheta}) :=\underset{(u,\bfq)\in V_{\boldsymbol{\theta}}^u \times V_{\boldsymbol{\theta}}^{\bfq}}{\mathrm{arg min}}
\mathcal{L}(u,\bfq),
\end{equation*}
where $\mathcal{L}$ denotes the  least--squares functional in~\eqref{eq:loss_algebraic}, and $(u_{\btheta}, \bfq_{\btheta})$ represents a function  that minimizes it over $V_{\boldsymbol{\theta}}^u \times V_{\boldsymbol{\theta}}^{\bfq}$.

\paragraph{Least-Squares solver}

To gain stability for the least-squares solver, we apply a change of variables to the discrete system and introduce a simple diagonal scaling 
based on the block entries of $\mathbf{H}_{\boldsymbol{\theta}}$, which provides an effective 
spectral normalization at negligible additional cost. For that, we define:

\[
 d_i := \sqrt{(\mathbf{H}^{uu})_{ii}+\varepsilon} \quad \text{for } i=1,\dots,n_u,
 \qquad
 d_{n_u+j} := \sqrt{(\mathbf{H}^{\bfq\bfq})_{jj}+\varepsilon}
 \quad \text{for } j=1,\dots,n_q,
 \]
and collect these values into a diagonal matrix
\begin{align}
\mathbf{D}_{\boldsymbol{\theta}} = \mathrm{diag}(d_1,\dots,d_n). \label{eq:diag}
\end{align}
In the above, we add a small $\varepsilon>0$
to stabilize the discrete system when $\mathbf{D}_{\btheta}$ becomes singular or near singular, for instance, when some spanning functions have negligible support inside $\Omega$. 

Then, we perform the change of variables
\[
\boldsymbol{c} := \mathbf{D}_{\boldsymbol{\theta}}^{-1}\,\tilde{\boldsymbol{c}}.
\]
Then, \eqref{eq:loss_algebraic} becomes:
\begin{equation}
\mathcal{L}(u,\bfq)
=
{\tilde{\boldsymbol{c}}}^\top
\tilde{\mathbf{H}}_{\boldsymbol{\theta}}
{\tilde{\boldsymbol{c}}}
-
2\,{\tilde{\boldsymbol{c}}}^\top  {\tilde{\boldsymbol{f}}}_{\boldsymbol{\theta}}
+
\ell_{\boldsymbol{\theta}},
\label{eq:loss_algebraic_change}
\end{equation}
where $\tilde{\mathbf{H}}_{\boldsymbol{\theta}}:=\mathbf{D}_{\boldsymbol{\theta}}^{-1}   {\mathbf{H}}_{\boldsymbol{\theta}} \mathbf{D}_{\boldsymbol{\theta}}^{-1}$, and ${\tilde{\boldsymbol{f}}}_{\boldsymbol{\theta}}:=\mathbf{D}_{\boldsymbol{\theta}}^{-1}{{\boldsymbol{f}}}_{\boldsymbol{\theta}}$.

\begin{remark}\label{rk:mu} The scaled matrix $\tilde{\mathbf H}_{\boldsymbol{\theta}}$ is symmetric and
positive semidefinite, with nonnegative diagonal entries bounded by one. 
Hence,
\[
\lambda_{\max}(\tilde{\mathbf H}_{\boldsymbol{\theta}}) \le n.
\]
Since the dimension of the matrix is moderate, we can safely select a Tikhonov parameter $\mu>0$ independent of the particular spanning set. 
\end{remark}
Thus:
\begin{equation*}
\tilde{\boldsymbol{c}}^*
=
\arg\min_{\tilde{\boldsymbol{c}}\in\mathbb{R}^n}
\left(
\tilde{\boldsymbol{c}}^\top
(\tilde{\mathbf{H}}_{\boldsymbol{\theta}}+\mu \mathbf{I})
\tilde{\boldsymbol{c}}
-
2\,\tilde{\boldsymbol{c}}^\top
\tilde{\boldsymbol{f}}_{\boldsymbol{\theta}}
\right)=
\left(
\tilde{\mathbf{H}}_{\boldsymbol{\theta}}+\mu \mathbf{I}
\right)^{-1}
\tilde{\boldsymbol{f}}_{\boldsymbol{\theta}} .
\label{eq:LS_solution}
\end{equation*}

The physical coefficients associated with the original (non-normalized) spanning functions
are then recovered by
\[
\boldsymbol{c}^*
=
\mathbf{D}_{\boldsymbol{\theta}}^{-1}\tilde{\boldsymbol{c}}^* ,
\]
and the corresponding solution $(u_{\boldsymbol{\theta}},\bfq_{\boldsymbol{\theta}})$
is reconstructed using the spanning function expansion in \eqref{eq: basis_expansion}.

\subsection{Approximation of the energy-norm Poincar\'e constant 
\label{sec: Poincare}}
Our framework involves the Poincar\'{e} constant $C_\kappa^P$, whose exact value is generally unknown {\em a priori}. To estimate it, we consider its approximation over the finite-dimensional subspace $V_{\boldsymbol\theta}^u$.  Let
\[
C_{\kappa,{\boldsymbol\theta}}^P := \max_{0\neq v \in V_{\boldsymbol\theta}^u} \frac{\|v\|_{L^2}}{\|v\|_{H_{0,\kappa}^1}} \leq \sup_{0\neq v \in H_0^1(\Omega)} \frac{\|v\|_{L^2}}{\|v\|_{H_{0,\kappa}^1}} = C_{\kappa}^P.
\]  This constant is computed from the underlying eigenvalue problem associated to the second-order PDE: find $(u_\lambda,\lambda)\in V_{\boldsymbol\theta}^u\times \mathbb{R}$ such that
\begin{align*}
\int_\Omega \kappa \nabla u_\lambda \cdot \nabla v 
= \lambda \int_\Omega u_\lambda v,
\qquad \forall v \in V_{\btheta}^u.
\end{align*}
Letting $\lambda_{\min,\boldsymbol\theta}$ be the smallest positive eigenvalue, we then set $C_{\kappa,\boldsymbol\theta}^{P}= (\lambda_{\min,\boldsymbol\theta})^{-1/2}$.

The quality of the approximation is dependent on how well the subspace $V_{\boldsymbol{\theta}}^u$ approximates the first eigenfunction. 
Expanding $u_\lambda$ in terms of a chosen  finite-dimensional set $\{{\varphi}_i\}_{i=1}^{n_u}$ of $V_{\boldsymbol{\theta}}^u$, the computation of  $\lambda_{\min,\boldsymbol{\theta}}$ reduces to solving the generalized eigenvalue problem
\begin{align}
\mathbf{H}^{uu}\boldsymbol{a}=\lambda \mathbf{M}\boldsymbol{a}, \label{eq:eigs_O}
\end{align}
where $\boldsymbol{a}$ is the coordinate vector of the eigenfunction
$u_\lambda \in V_{\boldsymbol{\theta}}^u$ with respect to the spanning functions
$\{{\varphi}_i\}_{i=1}^{n_u}$, $\mathbf{H}^{uu}$ is defined as in Eq.~\eqref{eq:Huu} and
$\mathbf{M}$ denotes the mass matrix, i.e.,
\[
\mathbf{M}_{ij} := \int_\Omega {\varphi}_i {\varphi}_j, \qquad 1\leq i,j\leq n_u. 
\]

Both matrices are symmetric and positive semidefinite. In particular, $\mathbf{M}$ may fail to be positive definite if the spanning functions are linearly dependent in $L^2(\Omega)$ or have negligible or vanishing support within $\Omega$.

As in the least-squares solver, we perform a change of variables and regularization for numerical stability: we first perform a change of variables and then regularize the resulting generalized eigenvalue problem. Instead of solving Eq.~\eqref{eq:eigs_O}, we  consider
\begin{align}
(\tilde{\mathbf{H}}^{uu} + \alpha_1\mathbf{I})\, \tilde{\boldsymbol{a}}
= \lambda\, (\tilde{\mathbf{M}} + \alpha_2 \mathbf{I})\, \tilde{\boldsymbol{a}}, \label{eq:gen eigen}
\end{align}
where $\alpha_1, \alpha_2 >0$ are regularization parameters, 
${\tilde{\mathbf{H}}}^{uu} :=  \mathbf{D}_{\btheta,u} ^{-1}{\mathbf{H}}^{uu} \mathbf{D}_{\btheta,u}^{-1}$, ${\tilde{\mathbf{M}}} :=  \mathbf{D}_{\btheta,u} ^{-1}{\mathbf{M}} \mathbf{D}_{\btheta,u}^{-1}$, $\tilde{\boldsymbol{a}} :=  \mathbf{D}_{\btheta,u} ^{-1} \boldsymbol{a} $, and $\mathbf{D}_{\btheta,u}$ correspond to the upper block of size $n_u$ of the  matrix $\mathbf{D}_{\btheta}$ defined in Eq.~\eqref{eq:diag}.

\begin{remark}
The parameter $\alpha_2>0$ ensures that 
$\tilde{\mathbf{M}}+\alpha_2\mathbf I$ is invertible and $\alpha_1>0$ regularizes the stiffness matrix 
$\tilde{\mathbf{H}}^{uu}$, which is generally only positive semidefinite and may have a null eigenvalue. 
In practice, both parameters are chosen to be small and fixed, just large enough to obtain an accurate approximation of the smallest eigenvalue, which is the primary quantity of interest. Although the regularization perturbs the exact discrete eigenvalues,
standard eigenvalue perturbation theory implies that, for sufficiently
small $\alpha_1$ and $\alpha_2$, the smallest eigenvalue of
\eqref{eq:gen eigen} remains a consistent approximation of
$\lambda_{\min,\boldsymbol{\theta}}$.
\end{remark}

\begin{remark}
In practice,  $\mathbf{H}^{uu}$ is assembled as part of the least--squares loss evaluation, since it depends only on the discrete space $V_{\boldsymbol{\theta}}^u$. Because the loss itself involves the Poincar\'e constant, this computation is performed once per update of the spaces. The extra cost for solving the energy-norm Poincar\'e constant is limited to the solution of the small generalized eigenvalue problem, which remains inexpensive as long as the dimension of the parameter space is moderate.
\end{remark}

\subsection{Update of the parametric spaces}
We define the loss functional
\begin{equation}
\mathcal{J}:
\boldsymbol{\theta} \longmapsto
\mathcal{L}(u_{\boldsymbol{\theta}},\bfq_{\boldsymbol{\theta}})
:=
\min_{(u,\bfq)\in V_{\boldsymbol{\theta}}^u \times V_{\boldsymbol{\theta}}^{\bfq}}
\mathcal{L}(u,\bfq),
\label{eq:loss_theta}
\end{equation}
which measures the optimal value of the functional attainable within the parametrized approximation spaces. The approximate solution
$(u_{\btheta},\bfq_{\btheta})$ is sought within these spaces by minimizing $\mathcal{L}$ as described in Section~\ref{sec: optimal_coefficients}.
The update of the space $V_{\boldsymbol{\theta}}^u \times V_{\boldsymbol{\theta}}^{\bfq}$ is then formulated as the following optimization problem:
\begin{equation*}
\min_{\boldsymbol{\theta}} \mathcal{J}(\boldsymbol{\theta}),
\label{eq:nested_minimization}
\end{equation*}
which aims to obtain optimal discrete spaces to approximate the solution. 
\begin{remark}
The proposed formulation can be interpreted as a reduced--order modeling
strategy in which the discrete spaces are optimized through the minimization of a
variational residual, with linear dependence on the reduced coefficients and
nonlinear dependence on the space parametrization.
\end{remark}

\begin{remark}
The framework involves two conceptually distinct minimization processes based on the same continuum loss functional. One may generalize these ideas by considering
\[
(u_{\boldsymbol{\theta}}, \bfq_{\boldsymbol{\theta}})
=
\operatorname*{arg\,min}_{(u,\bfq) \in V_{\boldsymbol{\theta}}}
\mathcal{L}_1(u,\bfq),
\]
and the minimization of
\[
\mathcal{J}(\boldsymbol{\theta})
=
\mathcal{L}_2\bigl(u_{\boldsymbol{\theta}}, \bfq_{\boldsymbol{\theta}}\bigr).
\]
Notice that our framework corresponds to the particular choice $\mathcal{L}_1 = \mathcal{L}_2 = \mathcal{L}$.

Alternative definitions of $\mathcal{J}$ could be adopted, for example based on 
a mixture of PDE and data-driven objectives. Similarly,
$\mathcal{L}$ could be approximated using different discretization strategies, such as a finite element method over a mesh dependent on the parameters $\btheta$ (see, e.g., \cite{aballay2025r}). This perspective suggests the possibility of introducing hybrid schemes employing different losses.
\end{remark}

\section{Neural Network-based Parametrization} \label{sec:section3}
We construct a finite-dimensional space
$
V_{\boldsymbol{\theta}}^u \times V_{\boldsymbol{\theta}}^{\bfq} 
$
using a feedforward neural network with $L$ hidden layers, defined recursively as
\[
\boldsymbol{\Phi}_0(\bfx) := \bfx \in \mathbb{R}^d, \qquad
\boldsymbol{\Phi}_l(\bfx) := \sigma_l\!\left(\mathbf{W}_l \boldsymbol{\Phi}_{l-1}(\bfx) + \mathbf{b}_l\right),
\quad 1 \le l \le L,
\]
where  
$\mathbf{W}_l$, $\mathbf{b}_l$ are trainable parameters, $\boldsymbol{\Phi}_l = (\Phi_l^{(1)},\dots,\Phi_l^{(n_l)}) \in \mathbb{R}^{n_l}$ and $\sigma_l\in C^{1,1}_{\mathrm{loc}}(\mathbb{R})$ is an activation function that acts component-wise. This regularity is essential for a consistent numerical evaluation of the gradient of the variational loss functional. In particular, activations with lower regularity, such as ReLU, lead to distributional derivatives with respect to the trainable parameters, introducing Dirac delta-type singularities that hinder a stable and consistent computation of gradients (see \cite[Section 2.2]{taylor2025regularity}).

The output of the last hidden layer,
\[
\boldsymbol{\Phi}_L : \Omega \to \mathbb{R}^{n_L}, \qquad
\boldsymbol{\Phi}_L = (\Phi_L^{(1)}, \dots, \Phi_L^{(n_L)}),
\]
is a vector-valued function, with each component $\Phi_L^{(i)} \in  W^{2,\infty}(\Omega)$. 
These functions provide a $\boldsymbol{\theta}$-dependent parametric spanning set of the form
\[
\boldsymbol{\theta} := \{\mathbf{W}_1, \mathbf{b}_1, \dots, \mathbf{W}_L, \mathbf{b}_L\}.
\]

To enforce homogeneous Dirichlet boundary conditions on $V_{\boldsymbol{\theta}}^u$, we introduce a smooth {cutoff} function 
$g_D \in C^{1,1}(\Omega)$, that vanishes on $\partial \Omega$ and is strictly positive in the interior.  
The spanning set is then defined via a multiplicative ansatz:
\begin{align*}
\varphi_j(x) := g_D(x)\,\Phi_L^{(j)}(x),\qquad 
 \boldsymbol{\tau}_{j +(k-1)n_L}(x) := \Phi_L^{(j)}(x)\,\mathbf{e}_{k},
\quad j=1,\dots,n_L,\; k=1,\dots,d,
\end{align*}
where $\{\mathbf{e}_k\}_{k=1}^d$ is the canonical basis of $\mathbb{R}^d$. This multiplicative ansatz is standard in variational formulations and ensures conformity of the discrete spaces within
$H_0^1(\Omega) \times H(\mathrm{div};\Omega)$ independently of $\boldsymbol{\theta}$
(see e.g., \cite{Berrone2023Dirichlet}).
Figure~\ref{fig: nn} illustrates our particular parametrization of the discrete space and the whole scheme for finding the solution presented  in Section~\ref{sec:discrete}.

\begin{figure}[!ht]
\begin{tikzpicture}[x=1.8cm, y=1.3cm, >=stealth]

\def\ninput{3}
\def\nhiddenA{4}
\def\nhiddenB{4}
\def\nhiddenC{4}
\def\noutput{3}

\foreach \i in {1,...,\ninput}
    \node[circle,draw,fill=gray!20, minimum size=0.6cm] (I\i) at (0,-\i-0.5) {};

 \node at (-0.4,-1-0.5) {$x_1\to$}; 
 \node at (-0.4,-2-0.5) {$x_2\to$}; 
 \node at (0,-2.5-0.5) {$\vdots$};  
 \node at (-0.4,-3-0.5) {$x_d\to$}; 

\foreach \i in {1,...,\nhiddenA}
    \node[circle,draw,fill=gray!20, minimum size=0.6cm] (H1\i) at (1.5,-\i) {};

 \node at (1.5,-3.5) {$\vdots$};   

\foreach \i in {1,...,\nhiddenB}
    \node[circle,draw,fill=gray!20, minimum size=0.6cm] (H2\i) at (2.5,-\i) {};
     \node at (2.5,-3.5) {$\vdots$}; 

\foreach \i in {1,...,\nhiddenC}
    \node[circle,draw,fill=gray!20,minimum size=0.6cm] (H3\i) at (3.5,-\i) {};
     \node at (3.5,-3.5) {$\vdots$}; 

\foreach \i in {1,...,\noutput}
    \node[circle,draw,fill=gray!20,minimum size=0.6cm] (O\i) at (5.,-1.5*\i+0.5) {};
 \node at (5.6,-1*1.5+0.5) {$\leftarrow u\in V_{\btheta}^u$};
 \node at (5.45,-2*1.5+0.5) {\small{$\leftarrow \bfq^{x_1}$}}; 
 \node at (5.45,-2.5*1.5+0.5) {$ \vdots$};
 \node at (5.45,-3*1.5+0.5) {\small$ \leftarrow\bfq^{x_d}$}; 
\draw [thick, decorate,decoration={brace}]
(5.75,-2.3) -- (5.75,-4.2);
\node at (6.1,-3.2) {$\in V_{\btheta}^{\bfq}$};

\draw[dashed, thick, rounded corners, fill=blue!20] 
    (4.4,-1.8) rectangle (4.75,-0.5);
\node at (4.6,-0.7) {${g_D}$};

\foreach \i in {1,...,\ninput}
    \foreach \j in {1,...,\nhiddenA}
        \draw[red,thick,->] (I\i) -- (H1\j);

\foreach \i in {1,...,\nhiddenA}
    \foreach \j in {1,...,\nhiddenB}
        \draw[red,thick,->] (H1\i) -- (H2\j);

\foreach \i in {1,...,\nhiddenB}
    \foreach \j in {1,...,\nhiddenC}
        \draw[red,thick,->] (H2\i) -- (H3\j);

\foreach \i in {1,...,\nhiddenC}
    \foreach \j in {1,...,\noutput}
        \draw[blue,thick,->] (H3\i) -- (O\j);

\node at (1.5,-0.5) {$\boldsymbol{\Phi}_1$};
\node at (2.5,-0.5) {$\boldsymbol{\Phi}_2$};
\node at (3,-0.5) {$...$};
\node at (3.5,-0.5) {$\boldsymbol{\Phi}_{L}$};

\draw[ thick, rounded corners] 
    (-1.,0) rectangle (3.8,-4.5);
\node at (0,-0.5) {Neural network};%
\node at (0,-4.2) {$\btheta$ parameters};%

\draw[ thick, rounded corners] 
    (6,-2.4) rectangle (8.4,-1.3);
\node at (7.2,-2) {$\mathcal{J}(\btheta) :=\displaystyle{\min_{(u,\bfq)\in V_{\btheta}^u\times V_{\btheta}^{\bfq}}} \mathcal{L}(u,\bfq)$};%

\draw[thick, blue] (4,-4.4) --(4,-4.6) --(4.6,-4.6);
\node[blue] at (4.8,-4.6) {LS};
\draw[->,thick, blue] (5,-4.6) -- (7.3,-4.6)--(7.3,-2.4);

\draw[<-,thick, red] (0,-4.5) --(0,-5) --(3.4,-5);
\node[red] at (4,-5) {Update $\btheta$};
\draw[thick, red] (4.5 ,-5) -- (6.4,-5)--(6.4,-2.4);

\draw[thick] (5.8,-0.7) --(5.8,-0.2)--(6.1,-0.2);
\node at (6.3,-0.2) {$C_{\kappa,\boldsymbol{\theta}}^P$};
\draw[thick,->](6.6,-0.2)--(7.,-0.2) -- (7.,-1.3);

\end{tikzpicture}
\caption{Fully connected neural network architecture and update.\label{fig: nn}}
\end{figure}
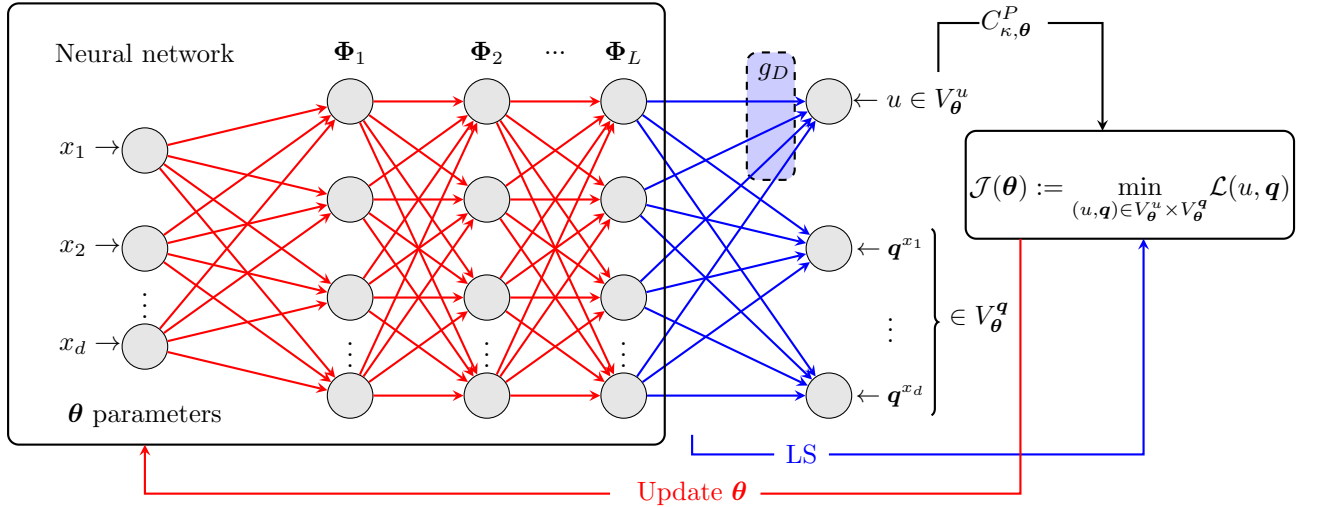

\subsection{Initialization of the discrete space}
\label{sec: bases_initialization}

 This subsection describes the construction of the initial  discrete spaces
$V_{\boldsymbol{\theta}}^u \times V_{\boldsymbol{\theta}}^{\bfq}$.
We focus on the choice of activation function and the resulting functional setting.

\paragraph{Activation function and functional setting}
We select the Rectified Quadratic Unit (ReQU) activation function, defined by
\[
\operatorname{ReQU}(x) := (\max\{0,x\})^2.
\]
Together with the strongly imposed boundary conditions $g_D \in C^{1,1}(\Omega)$, and the selected architecture, this choice yields an ansatz space
\[
V_{\boldsymbol{\theta}}^u \times V_{\boldsymbol{\theta}}^{\bfq}
\subset H_0^1(\Omega) \times H(\Div;\Omega),
\]
which is conforming with the functional setting of the  problem. 
Among admissible $C^{1,1}_{\mathrm{loc}}$ activations, $\mathrm{ReQU}$ is particularly appealing because it has comparatively low regularity while remaining in this class, which helps mitigate the quasi-Gibbs phenomenon.

\paragraph{Geometric structure of the ansatz space}
The initial distribution of the network parameters determines the initial geometric structure of the ansatz space.
Following~\cite{cai2024least}, we characterize this structure in terms of the support of the spanning functions,
defined through \emph{breaking hypersurfaces}.
For the $l$-th hidden layer, these hypersurfaces (breaking points in one dimension and breaking curves in two dimensions)
are defined as
\begin{equation}\label{eq:breaking_surfaces}
\mathcal{S}^{(l)}_j
:=
\left\{
\boldsymbol{x} \in \mathbb{R}^d :
\big(\mathbf{W}_l \boldsymbol{\Phi}_{l-1}(\boldsymbol{x}) + \mathbf{b}_l\big)^{(j)} = 0
\right\},
\qquad j = 1, \ldots, n_l.
\end{equation}
Each hypersurface $\mathcal{S}^{(l)}_j$ separates the input domain into regions where the corresponding
network component is either active or inactive, thereby inducing a piecewise-polynomial structure of the ansatz.

\begin{remark}
When considering more than one hidden layer, the breaking hypersurfaces associated with 
$\mathcal{S}^{(l)}_j$ can be described by piecewise-polynomial equations of degree up to $2l$. 
 In particular, in 2D these hypersurfaces appear as \emph{breaking curves} rather than straight lines. This contrasts with the case of standard ReLU networks, where the activation introduces linear partition boundaries. 
\end{remark}

\paragraph{Initialization in one dimension}
In the one-dimensional case, $\Omega = (0,1)$, the weights and biases of the first hidden layer $\boldsymbol{\Phi}_1$
are chosen as
\[
\mathbf{W}^{(i)}_1 = (-1)^{i+1}, 
\qquad 
\mathbf{b}^{(i)}_1 = (-1)^i \frac{i}{n_1+1},
\qquad i = 1, \ldots, n_1.
\]
This initialization produces $n_1$ equispaced breaking points located strictly inside the domain. Figure~\ref{fig:initialization} illustrates the resulting $\{\Phi_1^{(j)}\}_{j=1}^{10}$ functions when  $n_1=10$,  and their corresponding breaking points. For a single hidden layer ($L=1$), this choice uniquely determines the ansatz spaces $ V_{\boldsymbol{\theta}}^u \times V_{\boldsymbol{\theta}}^{\bfq}$.
\begin{figure}[!ht]
     \centering
 \begin{subfigure}[t]{0.49\textwidth}
 \centering
 \includegraphics[width=\linewidth]{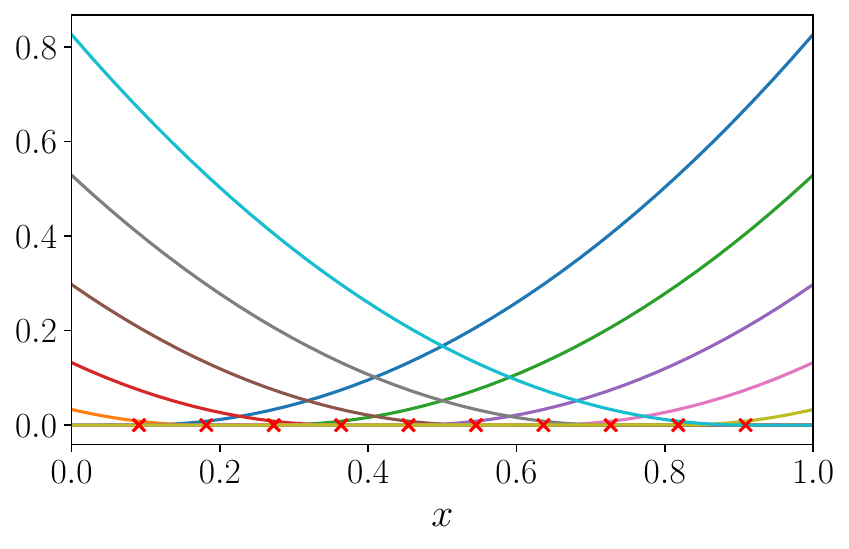}
 \caption{$\{\bfPhi_1^{(j)}\}_{j=1}^{10}$ }
\end{subfigure} 
\begin{subfigure}[t]{0.49\textwidth}
\centering
         \includegraphics[width=\linewidth]{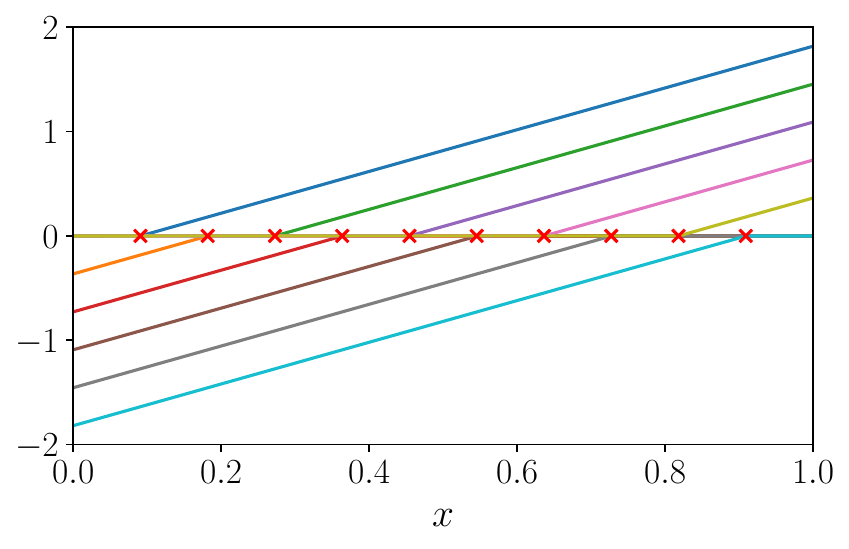}
         \caption{\centering Derivatives of $\{\bfPhi_1^{(j)}\}_{j=1}^{10}$\label{fig: init}}
     \end{subfigure}
     \caption{Initialization of $\{\bfPhi_1^{(j)}\}_{j=1}^{10}$ (left) and the corresponding derivatives (right) in $1$D.  The red markers represent the breaking point of each function. \label{fig:initialization}}
  
\end{figure}

\paragraph{Higher-dimensional initialization}
In higher dimensions, the initialization of the coefficients defining $\boldsymbol{\Phi}_1$
is extended via tensor products.
This construction yields axis-aligned breaking hypersurfaces that are equispaced along each coordinate direction.
Figure~\ref{fig: basis2D} shows an example of the functions generated by the components of $\boldsymbol{\Phi}_1$
when $\Omega = (0,1)^2$, $n_1 = 4$.

\paragraph{Deep networks}
For deep architectures ($L > 1$), the weights of the subsequent hidden layers are initialized
as identity mappings, i.e., with identity weight matrices and zero biases.
This choice preserves the geometric structure induced by the first layer while allowing
the optimization process to adapt higher-order features during training.

\begin{figure}[!ht]
\centering
\begin{subfigure}{0.32\textwidth}
\centering
    \includegraphics[width = \linewidth]{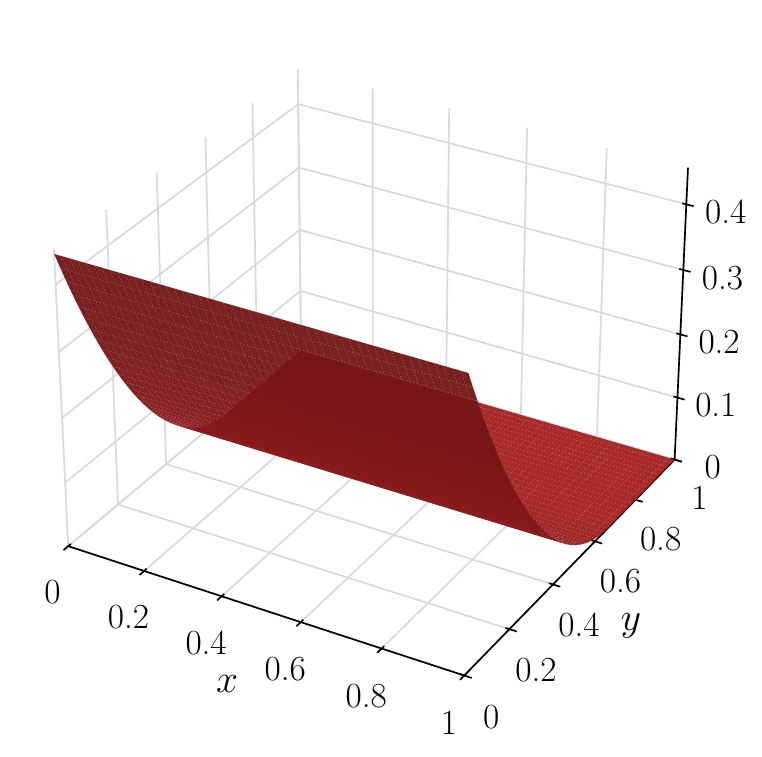}
    \caption{$\bfPhi_1^{(1)}$}
\end{subfigure}
\begin{subfigure}{0.32\textwidth}
\centering
    \includegraphics[width = \linewidth]{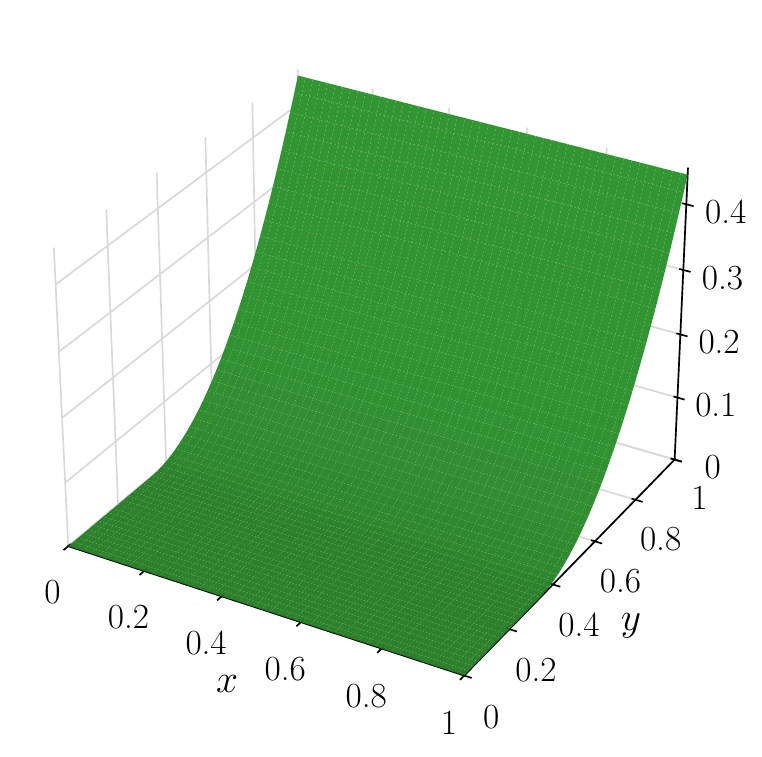}
    \caption{$\bfPhi_1^{(2)}$}
\end{subfigure}
\begin{subfigure}{0.32\textwidth}
\centering
    \includegraphics[width = \linewidth]{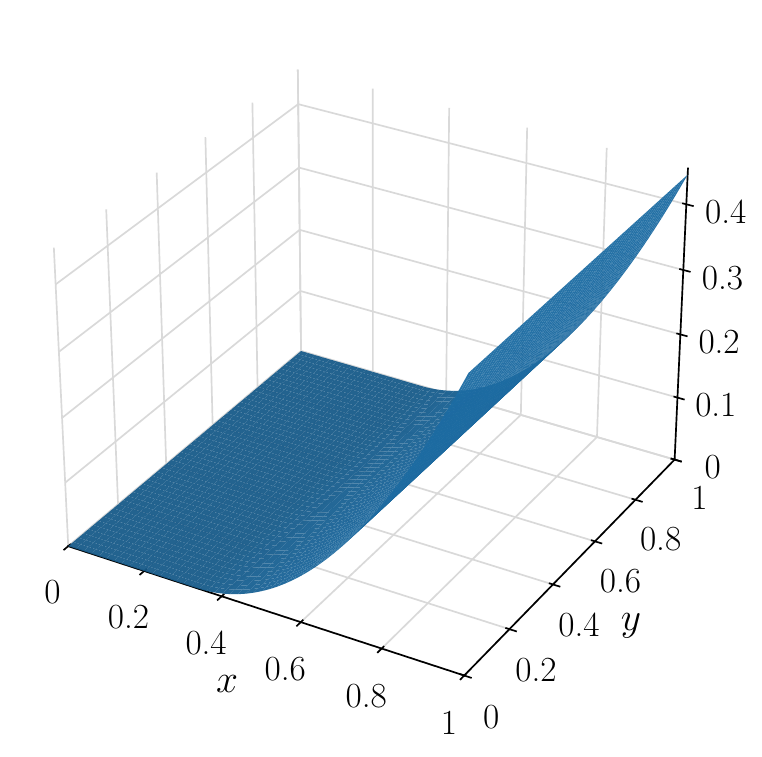}
    \caption{$\bfPhi_1^{(3)}$}
\end{subfigure}\\
\centering
\begin{subfigure}{0.33\textwidth}
\centering
    \includegraphics[width = \linewidth]{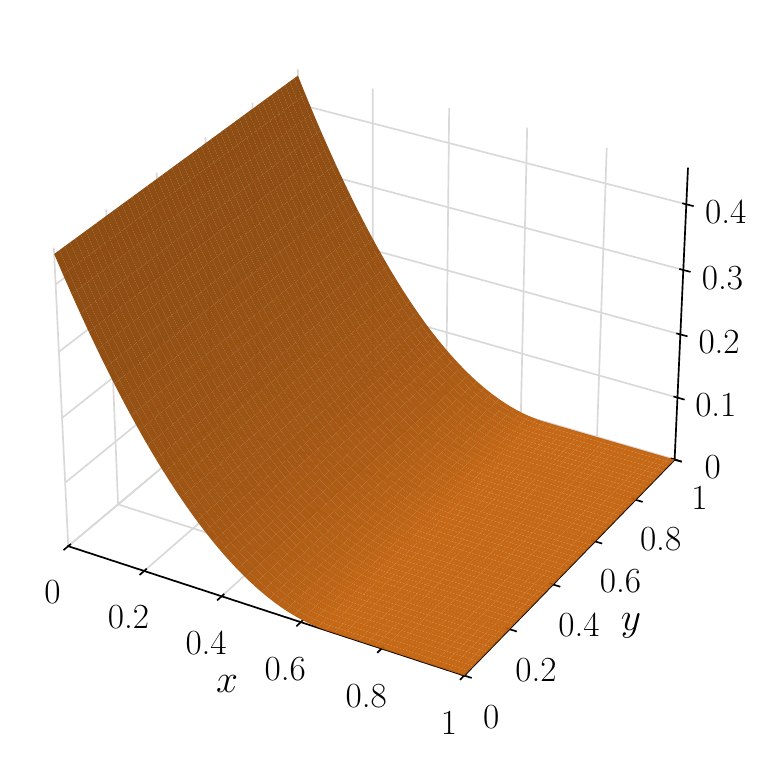}
    \caption{$\bfPhi_1^{(4)}$}
\end{subfigure}
\begin{subfigure}{0.33\textwidth}
\centering
    \includegraphics[width = \linewidth]{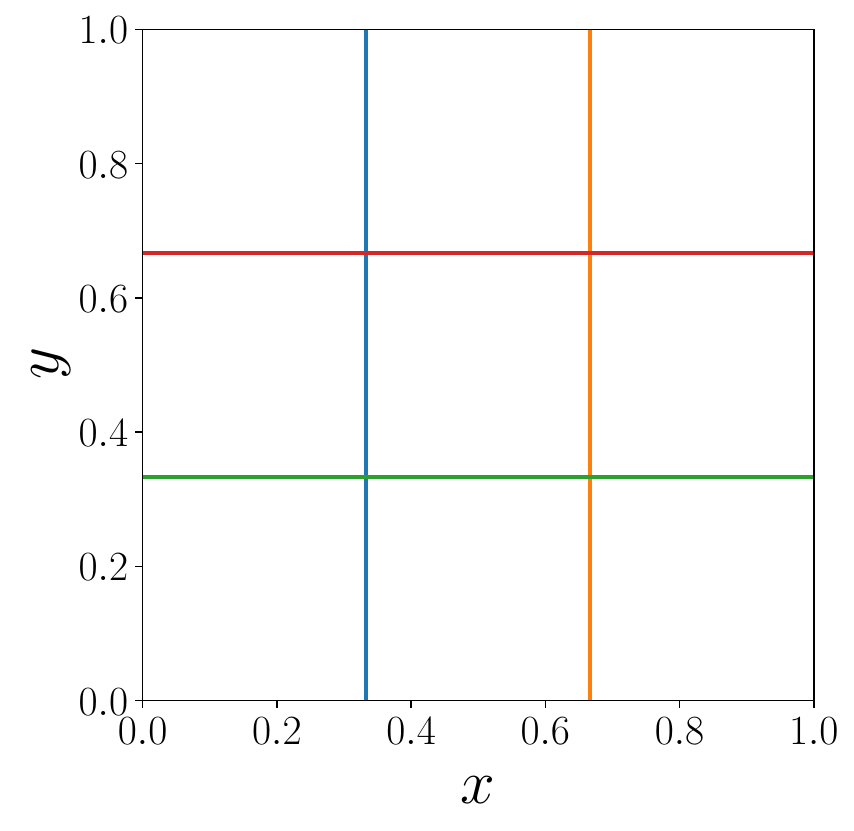}
    \caption{Breaking lines \label{fig:2D_BL}}
\end{subfigure}
\caption{Initialization of $\{\bfPhi_1^{(j)}\}_{j=1}^{8}$ in $2$D. In panel (e), the breaking line color corresponds to the associated component. \label{fig: basis2D}}
\end{figure}

\section{Stochastic Quadrature}\label{sec:integration}
Since for arbitrary parametric spaces $V_{\btheta}^u\times V^{\bfq}_{\btheta}$ the integrals present in the loss \eqref{eq:loss_algebraic_change} and in the energy-norm Poincar\'{e} approximation \eqref{eq:gen eigen} cannot be computed exactly, we resort to a quadrature rule $Q_N$ with $N$ points. The integral of a function 
$\mathcal{I}$ over $\Omega$ is then approximated by
\[
\int_{\Omega} \mathcal{I}
\;\approx\; 
Q_N[\mathcal{I}]
\;:=\;
\sum_{i=1}^N w_i\, \mathcal{I}(x_i),
\]
where $\{(w_i,x_i)\}_{i=1}^N$ are the corresponding quadrature weights and points.

Classical deterministic rules (e.g., Gaussian quadrature) may lead to overfitting in neural network spaces \cite{rivera2022quadrature}. 
To overcome this overfitting, the Monte Carlo (MC) estimator samples $x_i$ uniformly in $\Omega$ with weights $w_i=|\Omega|/N$,
\[
Q_N^{\mathrm{MC}}[\mathcal{I}] 
:= \frac{|\Omega|}{N}\sum_{i=1}^N \mathcal{I}(x_i),
\]
which is unbiased, i.e.,  $\mathbb{E}[Q_N^{\mathrm{MC}}[\mathcal{I}]] = \int_{\Omega} \mathcal{I}$, and satisfies 
\[
\operatorname{Var}\!\left(Q_N^{\mathrm{MC}}[\mathcal{I}]\right)
=
\frac{|\Omega|^2}{N}
\left(
\frac{1}{|\Omega|}\int_\Omega \mathcal{I}^2
-
\left(\frac{1}{|\Omega|}\int_\Omega \mathcal{I}\right)^2
\right),
\]
yielding a root mean-squared error of order $\mathcal{O}(N^{-1/2})$.

As the variance decays slowly with $N$, a large number of integration points may be required to reduce integration errors. We therefore adopt
a first-order stratified strategy. Let $\mathcal{T}_h=\{K\}$ be a partition of $\Omega$ into disjoint hyperrectangles and denote by $F_K:\widehat{K}=(-1,1)^d\to K$ the associated affine maps with Jacobian $|J_{F_K}|$. Using the unbiased first-order rule of \cite{haber1967modified}, the quadrature formula reads: 

For each $K$, let $\widehat{\boldsymbol{x}}_1^{K}$ be uniformly distributed in $\widehat{K}$ and define $\widehat{\boldsymbol{x}}_2^{K}=-\widehat{\boldsymbol{x}}_1^{K}$, with weights $\widehat{w}_1^{K}=\widehat{w}_2^{K}=2^{d-1}$. The global approximation is given by
\begin{equation*}\label{eq:quadrature_loss}
Q_N^{P1}[\mathcal{I}]
:=
\sum_{K\in\mathcal{T}_h}
|J_{F_K}|
\sum_{j=1}^{2}
\widehat{w}_j^K\,
\mathcal{I}\circ F_K(\widehat{\boldsymbol{x}}_j^K). 
\end{equation*}
When employing a uniform partition, the root mean-squared error of this rule behaves as $\mathcal{O}(N^{-1/2-2/d})$ for sufficiently smooth integrands, see \cite{taylor2025stochastic}.

 \subsection{Passive variance reduction}

The functional
\eqref{eq:loss_theta} is discretized as 
 \begin{equation}\label{eq: quadrature_loss}
 \mathcal{J}(\btheta) \approx  Q^{P1}_N[\mathcal{I}_{\btheta}] := \sum_{K \in \mathcal{T}_h}  |J_{F_K}| \sum_{j=1}^{2} \widehat{w}_j^K \left( \|\kappa^{1/2}\bfq_{\btheta}+ \kappa^{-1/2}\nabla u_{\btheta}\|^2_{\mathbb{R}^d} +2(C^P_{\kappa})^2|\Div\bfq_{\btheta} - f|^2\right)\circ F_K(\widehat{\boldsymbol{x}}_j^K),
 \end{equation}
 where $\mathcal{I}_{\btheta}$ is the integrand of the loss function.

The parameters $\btheta$ are updated by means of a gradient-based optimization algorithm.  
The update requires the explicit computation of the gradient of the functional \eqref{eq: loss final}. 
Replacing the exact loss by its quadrature approximation leads to 
\[
\nabla_{\btheta}\mathcal{J} 
\approx 
\nabla_{\btheta} Q_N^{P_1}[\mathcal{I}_{\btheta}].
\]
 The resulting discrete parameter update can be interpreted as a Stochastic Gradient Descent (SGD) scheme.
The following theorem demonstrates that as $(u_{\btheta},\bfq_{\btheta})$ approaches the exact solution, the variance of the gradient approximation naturally decreases, which accelerates the convergence of the optimization strategy\footnote{This is not the case for other deep variational approaches as VPINNs \cite{kharazmi2019variational, URIARTE202576} and its variants (see, e.g. \cite{rojas2024robust}), or Deep Ritz \cite{yu2018deep}.}.

\begin{theorem}[Passive variance reduction of the loss gradient]
\label{thm: passive_variance}
Assume that the neural network parametrizations
$(u_{\btheta},\bfq_{\btheta})$ is sufficiently smooth with respect to the trainable parameters $\btheta$. 
 Furthermore, for each component $\theta$ of the parameter vector $\boldsymbol{\theta}$, assume that there exists a constant $C_{\rm grad}>0$ such that, for almost every
$\boldsymbol{x}\in\Omega$,
\begin{equation}
\label{eq:bounded_grad}
\left\|
\begin{pmatrix}
\kappa^{1/2}(\boldsymbol{x})\dfrac{\partial \bfq_{\btheta}}{\partial \theta}(\boldsymbol{x})
+\kappa^{-1/2}(\boldsymbol{x})\dfrac{\partial \nabla u_{\btheta}}{\partial \theta}(\boldsymbol{x})
\\[1ex]
\sqrt{2}\,C^P_{\kappa}\,\dfrac{\partial \Div \bfq_{\btheta}}{\partial \theta}(\boldsymbol{x})
\end{pmatrix}
\right\|_{\mathbb{R}^{d+1}}
\le C_{\rm grad}.
\end{equation}

Then, for the unbiased quadrature approximation
$Q^{P1}_N[\mathcal{I}_{\btheta}]$ of the loss functional in Eq.~\eqref{eq: quadrature_loss},
the variance of its gradient satisfies
\begin{equation}
\label{eq: passive_variance_bound_improved}
\operatorname{Var}\left[ \frac{\partial}{\partial \theta} Q_N^{P1}[\mathcal{I}_{\btheta}]\right]
\leq
4C_{\rm grad}^2 \, |\Omega|\,
\mathcal{L}(u_{\btheta}, \bfq_{\btheta}),
\end{equation}
where the constant $C_{\rm grad}>0$ depends only on the bound
\eqref{eq:bounded_grad} and is independent of the quadrature rule.
\end{theorem}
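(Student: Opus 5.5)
Write the integrand as $\mathcal{I}_{\btheta}=\|\boldsymbol{R}_{\btheta}\|^2_{\mathbb{R}^{d+1}}$, where
\[
\boldsymbol{R}_{\btheta}:=\begin{pmatrix}\kappa^{-1/2}\bfq_{\btheta}+\kappa^{1/2}\nabla u_{\btheta}\\ \sqrt{2}\,C^P_\kappa(\Div\bfq_{\btheta}-f)\end{pmatrix}.
\]
Then $\int_\Omega \mathcal{I}_{\btheta}=\mathcal{L}(u_{\btheta},\bfq_{\btheta})$, and the vector $\boldsymbol{G}_{\btheta}:=\partial_\theta\boldsymbol{R}_{\btheta}$ is exactly the vector bounded in \eqref{eq:bounded_grad}. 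Note that $f$ does not depend on $\theta$, and that the placement of the $\kappa^{\pm1/2}$ factors should be read consistently with \eqref{eq: loss final}. By the chain rule, pointwise
\[
\partial_\theta\mathcal{I}_{\btheta}=2\,\boldsymbol{R}_{\btheta}\cdot\boldsymbol{G}_{\btheta},\qquad\text{so}\qquad |\partial_\theta\mathcal{I}_{\btheta}|^2\le 4C_{\rm grad}^2\,\mathcal{I}_{\btheta}
\]
by Cauchy--Schwarz and \eqref{eq:bounded_grad}. This pointwise inequality is the heart of the argument: the squared derivative of the integrand is controlled by the integrand itself, hence by the local residual.

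Next, exchange differentiation and quadrature. The quadrature points and weights of $Q_N^{P1}$ do not depend on $\btheta$, so $\partial_\theta Q_N^{P1}[\mathcal{I}_{\btheta}]=Q_N^{P1}[\partial_\theta\mathcal{I}_{\btheta}]$; the smoothness assumption on the parametrization justifies this pointwise. One subtlety is that $(u_{\btheta},\bfq_{\btheta})$ are the least-squares minimizers, so they depend on $\btheta$ also through the coefficients $\boldsymbol{c}^*(\btheta)$. I would simply absorb this into the assumed smoothness of the parametrization, since \eqref{eq:bounded_grad} is stated for the total derivative.

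It remains to bound $\operatorname{Var}(Q_N^{P1}[g])$ for $g=\partial_\theta\mathcal{I}_{\btheta}$. I would use $\operatorname{Var}(X)\le\mathbb{E}[X^2]$ together with the stratified structure. The elements are sampled independently, so the variance is the sum over $K$ of the variances of the local two-point estimators. For each element, apply Cauchy--Schwarz to the two-term sum:
\[
\Bigl(|J_{F_K}|\sum_{j}\widehat w_j\, g(F_K(\widehat{\boldsymbol{x}}_j))\Bigr)^2\le |J_{F_K}|^2\,2^{d}\sum_j \widehat w_j\, g(F_K(\widehat{\boldsymbol{x}}_j))^2,
\]
using $\sum_j\widehat w_j=2^d=|\widehat K|$. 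Taking expectations, each $\widehat{\boldsymbol{x}}_j$ is uniform on $\widehat K$, so the expectation becomes $|J_{F_K}|\,2^d\int_K g^2=|K|\int_K g^2$. Summing over $K$ and bounding $|K|\le|\Omega|$ yields $\operatorname{Var}\le|\Omega|\int_\Omega g^2$. The pointwise bound then gives $\int_\Omega g^2\le 4C_{\rm grad}^2\int_\Omega\mathcal{I}_{\btheta}=4C_{\rm grad}^2\,\mathcal{L}(u_{\btheta},\bfq_{\btheta})$, which is \eqref{eq: passive_variance_bound_improved}.

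The main obstacle is bookkeeping the constants in the variance of the stratified rule so that exactly $|\Omega|$ appears rather than something larger. The bound $|K|\le|\Omega|$ is crude but suffices; in fact $\max_K|K|$ would give a sharper result. The pointwise estimate itself is straightforward.
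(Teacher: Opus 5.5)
Your argument is correct, and it reaches the bound by a different route from the paper.

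\textbf{The paper's route.} The paper does not use the stratified structure at all. It bounds $|\langle \mathbf{R}_{\btheta},\partial_\theta\mathbf{R}_{\btheta}\rangle|\le C_{\rm grad}\|\mathbf{R}_{\btheta}\|$ pointwise. It then applies Cauchy--Schwarz once to the \emph{entire} quadrature sum, using only that the weights are positive and sum deterministically to $|\Omega|$. Next it takes expectations using unbiasedness for $\|\mathbf{R}_{\btheta}\|^2$, and finally uses $\operatorname{Var}\le\mathbb{E}[X^2]$ globally. This needs no independence between elements and applies verbatim to any unbiased positive-weight rule, including plain Monte Carlo. But it controls the full second moment, which contains $(\partial_\theta\mathcal{J})^2$, so it cannot show any decay in $N$.

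\textbf{Your route.} You move the Cauchy--Schwarz down to the integrand, $|\partial_\theta\mathcal{I}_{\btheta}|^2\le 4C_{\rm grad}^2\,\mathcal{I}_{\btheta}$. You then use independence of the samples across elements to split the variance into local two-point contributions, applying $\operatorname{Var}\le\mathbb{E}[X^2]$ only element by element. This requires assuming the $\widehat{\boldsymbol{x}}_1^K$ are independent over $K$. That is standard for Haber's stratified rule but is not stated explicitly in the paper, so you should state it as a hypothesis.

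\textbf{What each buys.} In exchange for that assumption, your computation actually proves $\operatorname{Var}\le 4C_{\rm grad}^2\,(\max_K|K|)\,\mathcal{L}(u_{\btheta},\bfq_{\btheta})$. This implies the stated bound. For a uniform partition into $N/2$ cells it is smaller by a factor $2/N$, so it captures the expected $\mathcal{O}(N^{-1})$ decay of the gradient variance on top of the passive reduction with the loss. The paper's estimate cannot see this decay. It would be worth keeping $\max_K|K|$ rather than relaxing to $|\Omega|$.

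Your reading of the $\kappa^{\pm1/2}$ placement consistently with the loss in \eqref{eq: loss final} is the right one. The identity $\int_\Omega\|\mathbf{R}_{\btheta}\|^2=\mathcal{L}$ requires it.
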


\begin{proof}
Introduce the $\mathbb{R}^{d+1}$-valued residual
\[
\mathbf{R}_{\btheta}
:=
\begin{pmatrix}
\kappa^{1/2}\bfq_{\btheta}+\kappa^{-1/2}\nabla u_{\btheta}
\\[0.5ex]
\sqrt{2}\,C^P_{\kappa}\,(\Div\bfq_{\btheta}-f)
\end{pmatrix}.
\]
Since 
\[
\|\mathbf{R}_{\btheta}\|_{\mathbb{R}^{d+1}}^2
=
\|\kappa^{1/2}\bfq_{\btheta}+\kappa^{-1/2}\nabla u_{\btheta}\|_{\mathbb{R}^d}^2
+
2(C^P_{\kappa})^2 |\Div\bfq_{\btheta}-f|^2,
\]
we have
\[
\mathcal{L}(u_{\btheta},\bfq_{\btheta})
=
\int_{\Omega} \|\mathbf{R}_{\btheta}\|_{\mathbb{R}^{d+1}}^2= \int_{\Omega} \mathcal{I}_{\btheta}. 
\]

Differentiating the quadrature-based loss \eqref{eq: quadrature_loss} with respect to a single parameter $\theta$ of $\btheta$, and using the chain rule, we obtain
\begin{equation*}
\begin{aligned}
\frac{\partial}{\partial\theta}Q^{P1}_N[\mathcal{I}_{\btheta}]
&=
2\sum_{K \in \mathcal{T}_h} |J_{F_K}| \sum_{j=1}^{2} \widehat{w}_j^K
\Big\langle
\mathbf{R}_{\btheta},
\frac{\partial \mathbf{R}_{\btheta}}{\partial \theta}
\Big\rangle_{\mathbb{R}^{d+1}}
\circ F_K(\widehat{\boldsymbol{x}}_j^K),
\end{aligned}
\end{equation*}
where
\[
\frac{\partial \mathbf{R}_{\btheta}}{\partial \theta}
=
\begin{pmatrix}
\kappa^{1/2}\dfrac{\partial \bfq_{\btheta}}{\partial\theta}
+\kappa^{-1/2}\dfrac{\partial \nabla u_{\btheta}}{\partial \theta}
\\[1ex]
\sqrt{2}\,C^P_{\kappa}\,\dfrac{\partial\Div\bfq_{\btheta}}{\partial\theta}
\end{pmatrix}.
\]

Using the bound \eqref{eq:bounded_grad}, the positivity of the quadrature weights, and the Cauchy--Schwarz inequality in $\mathbb{R}^{d+1}$, we obtain
\begin{equation}
\label{eq:boundedQM_clarified}
\begin{aligned}
\left|
\frac{\partial}{\partial \theta} Q^{P1}_N[\mathcal{I}_{\btheta}]
\right|
&\le
2C_{\rm grad}
\sum_{K\in\mathcal{T}_h}|J_{F_K}| \sum_{j=1}^{2} \widehat{w}_j^K
\|\mathbf{R}_{\btheta}\circ F_K(\widehat{\boldsymbol{x}}_j^K)\|_{\mathbb{R}^{d+1}}.
\end{aligned}
\end{equation}
Applying now the Cauchy--Schwarz inequality to the quadrature sum gives
\begin{equation*}
\begin{aligned}
\left(
\sum_{K\in\mathcal{T}_h}|J_{F_K}| \sum_{j=1}^{2} \widehat{w}_j^K
\|\mathbf{R}_{\btheta}\circ F_K(\widehat{\boldsymbol{x}}_j^K)\|_{\mathbb{R}^{d+1}}
\right)^2
&\le \left(
\sum_{K\in\mathcal{T}_h}|J_{F_K}| \sum_{j=1}^{2} \widehat{w}_j^K
\right)\\
& \times\left(
\sum_{K\in\mathcal{T}_h}|J_{F_K}| \sum_{j=1}^{2} \widehat{w}_j^K
\|\mathbf{R}_{\btheta}\circ F_K(\widehat{\boldsymbol{x}}_j^K)\|_{\mathbb{R}^{d+1}}^2
\right).
\end{aligned}
\end{equation*}
Since the quadrature is unbiased and exactly integrates constants, the first factor equals $|\Omega|$. Therefore, from \eqref{eq:boundedQM_clarified},
\begin{equation*}
\begin{aligned}
\left|
\frac{\partial}{\partial \theta} Q^{P1}_N[\mathcal{I}_{\btheta}]
\right|^2
&\le
4C_{\rm grad}^2 |\Omega|
\sum_{K\in\mathcal{T}_h}|J_{F_K}| \sum_{j=1}^{2} \widehat{w}_j^K
\|\mathbf{R}_{\btheta}\circ F_K(\widehat{\boldsymbol{x}}_j^K)\|_{\mathbb{R}^{d+1}}^2.
\end{aligned}
\end{equation*}

Taking expectations and using again the unbiasedness of the quadrature rule, we obtain
\begin{equation*}
\begin{aligned}
\mathbb{E}\!\left[
\left|
\frac{\partial}{\partial \theta} Q^{P1}_N[\mathcal{I}_{\btheta}]
\right|^2
\right]
&\le
4C_{\rm grad}^2 |\Omega|
\,
\mathbb{E}\!\left[
\sum_{K\in\mathcal{T}_h}|J_{F_K}| \sum_{j=1}^{2} \widehat{w}_j^K
\|\mathbf{R}_{\btheta}\circ F_K(\widehat{\boldsymbol{x}}_j^K)\|_{\mathbb{R}^{d+1}}^2
\right]
\\
&=
4C_{\rm grad}^2 |\Omega|
\int_{\Omega}\|\mathbf{R}_{\btheta}(\boldsymbol{x})\|_{\mathbb{R}^{d+1}}^2\,d\boldsymbol{x}
\\
&=
4C_{\rm grad}^2 |\Omega|\,\mathcal{L}(u_{\btheta},\bfq_{\btheta}).
\end{aligned}
\end{equation*}
Since
\[
\operatorname{Var}\left[ \frac{\partial}{\partial \theta} Q_N^{P1}[\mathcal{I}_{\btheta}]\right]
\le
\mathbb{E}\!\left[
\left|
\frac{\partial}{\partial \theta} Q_N^{P1}[\mathcal{I}_{\btheta}]
\right|^2
\right],
\]
the result follows.
\end{proof}

\section{Numerical Experiments}\label{sec:section6}

\subsection{Numerical procedure and setting}
\label{sec:implementation}

We initialize the space $V_{\btheta}^u\times V_{\btheta}^{\bfq}$ as detailed in Section~\ref{sec: bases_initialization}. 
We then solve the resulting finite-dimensional least-squares problem over
$V_{\boldsymbol{\theta}}^u \times V_{\boldsymbol{\theta}}^{\bfq}$, stated in Section \ref{sec: optimal_coefficients}. Subsequently, we update the network parameters using the Adam optimizer \cite{kingma2015adam}.
If the energy-norm Poincar\'{e} constant $C_{\kappa}^P$ is unknown, we estimate it via the eigenvalue problem in Section~\ref{sec: Poincare} at predetermined iterations, approximating the integrals with the same quadrature rule. Since this constant serves as an upper-bound constant, we keep the largest value between the previous and the new estimate. Algorithm~\ref{alg:training} summarizes the overall training procedure. 

\begin{algorithm}[H]
\caption{Training and update of the parametric trial spaces}
\label{alg:training}
\begin{algorithmic}[1]
\Require Number of iterations $K$, domain $\Omega$.
\State Initialize nonlinear parameters $\boldsymbol{\theta}^{(0)}$
\For{$k = 0,1,\dots,K-1$}
    \State Construct discrete spaces
    $V_{\boldsymbol{\theta}^{(k)}}^u \times
    V_{\boldsymbol{\theta}^{(k)}}^{\bfq}$
    \If{Poincaré update step}
        \State Update the energy-norm Poincaré constant $C_{\kappa,\btheta}^P$
    \EndIf
     \State Solve the reduced least-squares problem in
    $V_{\boldsymbol{\theta}^{(k)}}^u \times
    V_{\boldsymbol{\theta}^{(k)}}^{\bfq}$
    \State Update parameters $\boldsymbol{\theta}^{(k+1)}$ with Adam
\EndFor
\State Solve the reduced least-squares problem in
    $V_{\boldsymbol{\theta}^{(K)}}^u \times
    V_{\boldsymbol{\theta}^{(K)}}^{\bfq}$
\State \Return $(u_{\text{NN}}, \bfq_{\text{NN}}): = (u_{\boldsymbol{\theta}^{(K)}},\bfq_{\boldsymbol{\theta}^{(K)}})$
\end{algorithmic}
\end{algorithm}

Following the computation of the optimal solution at Step 7, we assess accuracy at each iteration by evaluating the error in the $H_{0,\kappa}^1 \times H(\Div, \kappa)$ norm and its individual components.  We compute all error norms using a fine trapezoidal rule on an equispaced grid.
\subsection{One-dimensional experiments}\label{sec:1D_ex}
We consider the domain $\Omega = (0,1)$. We use a shallow neural network ($L=1$) with ReQU activation and $n_1=16$ spanning functions, whose hidden layer output $\bfPhi_1$ induces a piecewise-quadratic representation in 1D.  The homogeneous Dirichlet boundary condition is enforced via the lifting $g_D = x(1-x)$.
The least-squares solver utilizes a Tikhonov parameter $\mu = 10^{-12}$. When needed, we estimate the energy-norm Poincar\'{e} constant every 100 iterations using regularization parameters $\alpha_1 = 10^{-8}$ and $\alpha_2 = 10^{-10}$.  We also take $\varepsilon = 10^{-15}$ in Eq. \eqref{eq:diag}.

\subsubsection{Effect of Stochastic Integration}\label{sec: num_integration}
We investigate how the number of quadrature points $N$ affects training convergence and stability by comparing the proposed FOSLS formulation against the Deep Ritz (DR) method \cite{yu2018deep}:
\[
\mathcal{L}_{\text{Ritz}}(u) = \int_{\Omega} \left(\frac{1}{2}\kappa |\nabla u|^2 -fu\right).
\]
We consider a smooth diffusion problem with $\kappa=1$ and exact solution $u^{*}=\sin(2\pi x)$. Neural networks are trained using  a learning rate of $10^{-4}$. To visualize the evolution of the loss and the errors, we employ a log-binning approach\footnote{We partition the iterations into 40 logarithmically equispaced bins, compute the mean and standard deviation within each bin, and display the results with error bars.}.
\begin{figure}[!ht]
     \begin{subfigure}[t]{0.49\linewidth}
        \includegraphics[width = \linewidth]{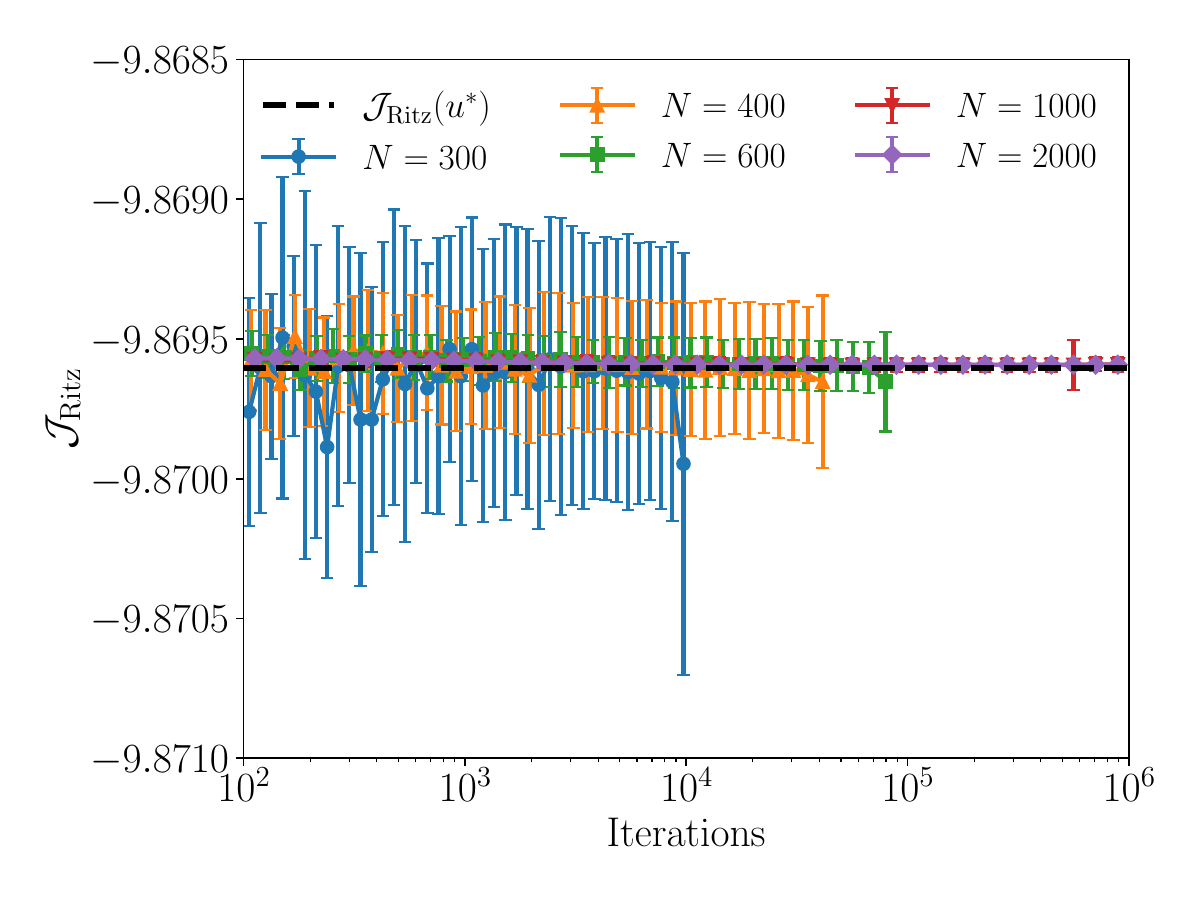}
        \caption{Deep Ritz functional\label{fig:DR_loss}}
    \end{subfigure}
     \begin{subfigure}[t]{0.49\linewidth}
        \includegraphics[width = \linewidth]{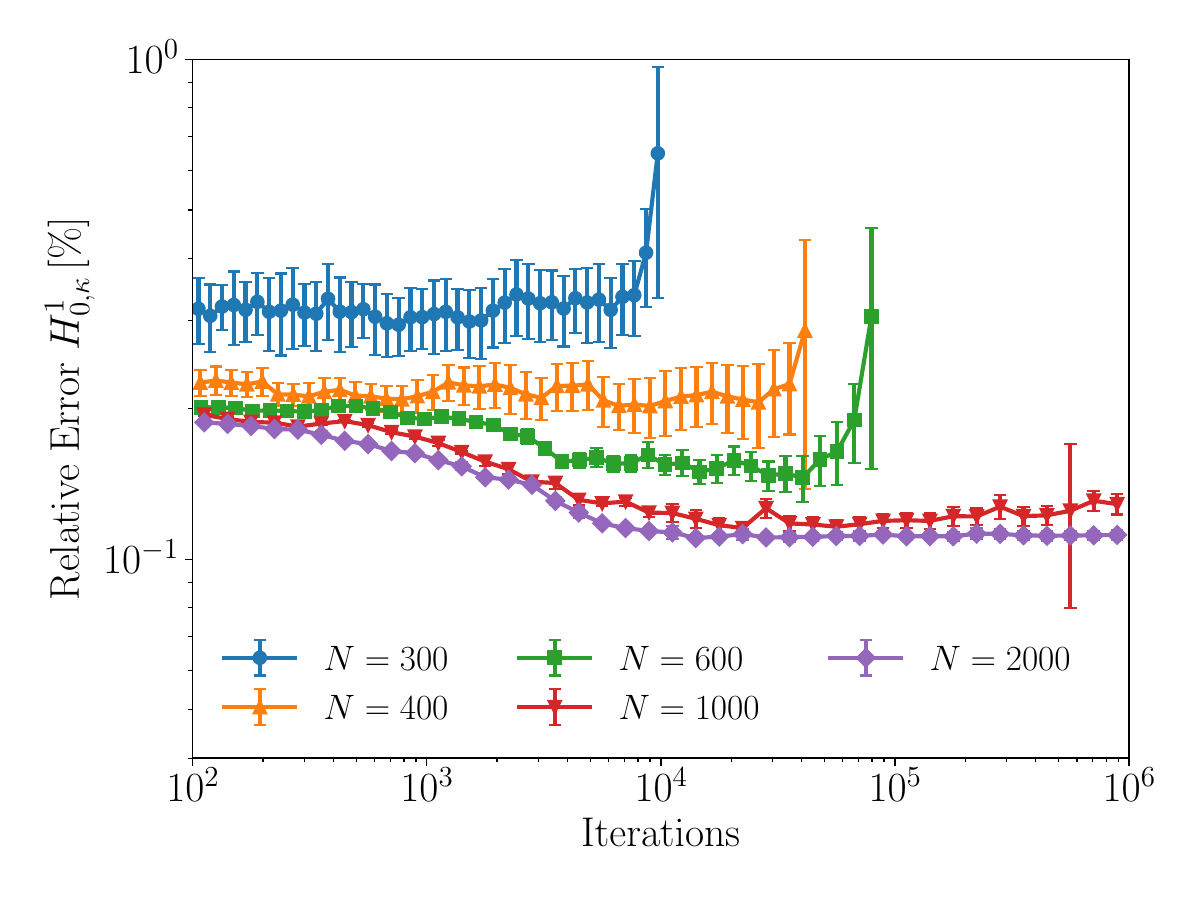}
        \caption{Relative error in $H_{0,\kappa}^1$ \label{fig:DR_error}}
    \end{subfigure}
    \caption{Loss and $H_{0,\kappa}^1$ relative errors  obtained from the DR functional on a smooth problem for varying number of integration points. Notice the first 100 iterations are not shown. In panel \ref{fig:DR_loss}, $\mathcal{J}_{\rm Ritz}$  denotes the functional \eqref{eq:loss_theta} with respect to $\mathcal{L}_{\rm Ritz}$.\label{fig: integration_Ritz}}
\end{figure}

 For the DR method, we consider $N \in \{300, 400, 600, 1000, 2000\}$ quadrature points. Figure~\ref{fig: integration_Ritz} shows the results. Since the DR formulation fails to satisfy the Passive Variance Reduction property, the loss exhibits overflow errors (see Figure \ref{fig:DR_loss}). This behavior is associated with a high variance in the approximation of the gradient. As a consequence, the training becomes unstable unless a sufficiently large number of integration points are used (see Figure~\ref{fig:DR_error}).

\begin{figure}[!t]
    \begin{subfigure}[t]{0.49\linewidth}
        \includegraphics[width = \linewidth]{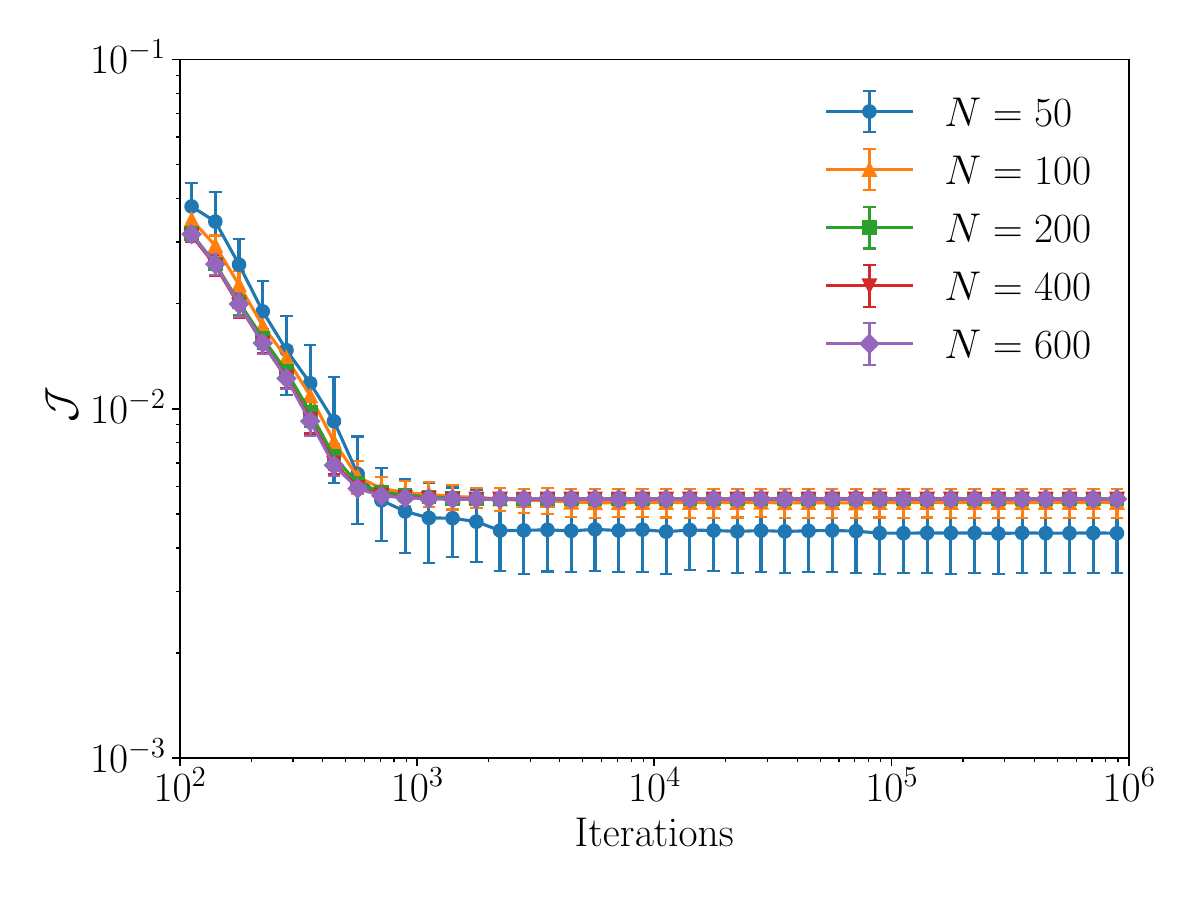}
        \caption{FOSLS functional}
    \end{subfigure}
    \begin{subfigure}[t]{0.49\linewidth}
        \includegraphics[width = \linewidth]{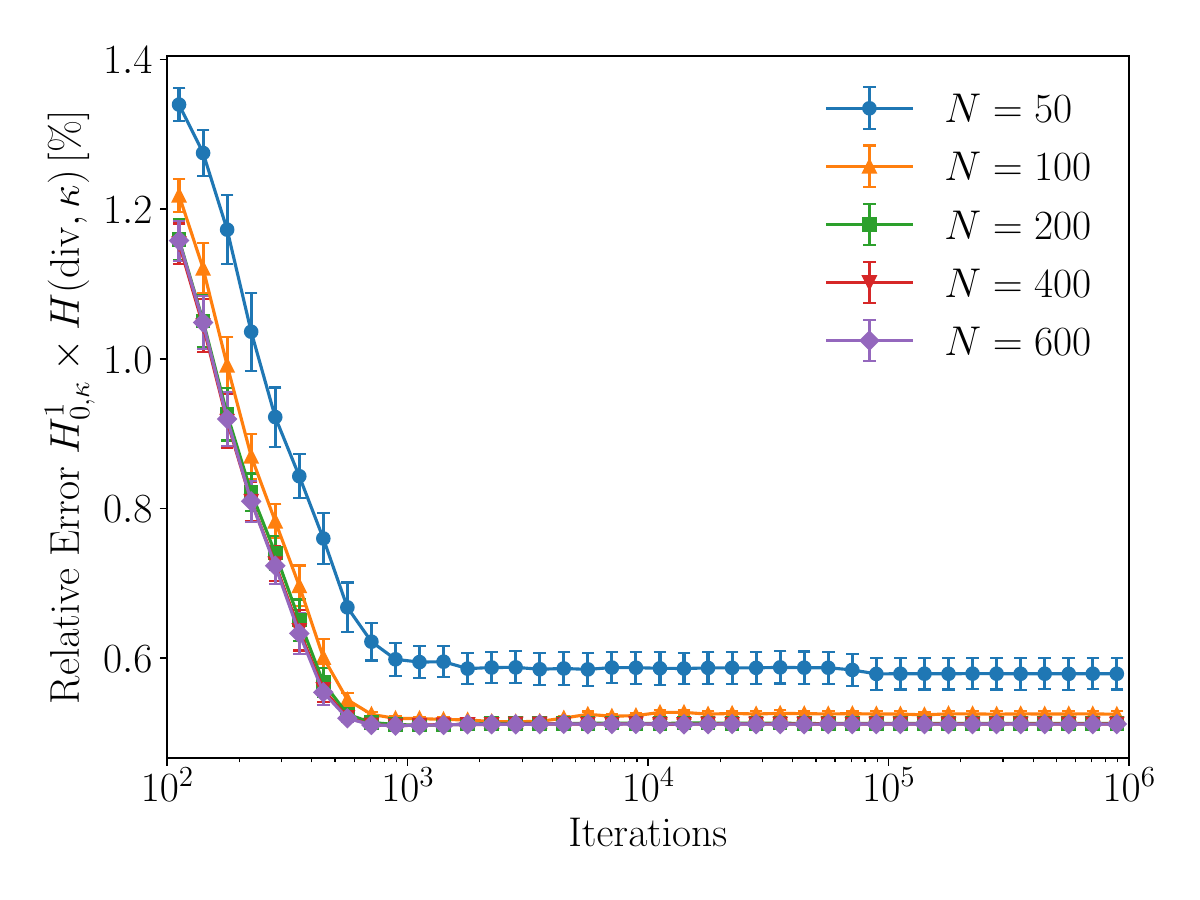}
        \caption{Relative error in energy norm $H_{0,\kappa}^1 \times H(\Div,\kappa)$}
    \end{subfigure}\\
    \caption{Loss and relative errors obtained from the  FOSLS functional on a smooth problem for varying number of integration points. Notice the first 100 iterations are not shown. \label{fig: integration_FOSLS} }
\end{figure}
 For FOSLS, we test the method under coarser rules: $N \in \{50, 100, 200, 400, 600\}$. Figure~\ref{fig: integration_FOSLS} shows the results. Despite employing significantly fewer quadrature points than with the DR method, the training remains stable.  
 This behavior underscores the importance of defining a loss functional that satisfies the Passive Variance Reduction property. In higher-dimensional settings or more challenging problems to integrate, formulations possessing this property can achieve stable training while maintaining a reasonably small number of quadrature points.

\subsubsection{Robust functional vs Standard (non-robust) functional}
In this and the following examples, we consider the following setup: a diffusion problem with a  discontinuous coefficient and a  smooth source term:
\begin{equation}\label{eq: interface_problem}
\kappa=
\begin{cases}
\kappa_0, & x\in \bigl(0,\tfrac12\bigr),\\[4pt]
1, & x\in \bigl(\tfrac12,1\bigr),
\end{cases}
\qquad \text{ and} \qquad 
f(x)=4\pi^2\sin(2\pi x),
\end{equation}
where $\kappa_0>0$. For this family of problems, the exact solution is given by:
\[
u^*=
\begin{cases}
\dfrac{1}{\kappa_0}\,\sin(2\pi x), & x\in \bigl[0,\tfrac12\bigr],\\[8pt]
\sin(2\pi x), & x\in \bigl[\tfrac12,1\bigr],
\end{cases} \qquad \text{ and }\qquad 
\bfq^*=-2\pi\cos(2\pi x).
\]
The exact solution satisfies $u^* \in H^{1}(\Omega)\backslash H^2(\Omega)$ and $\bfq^* \in H^2(\Omega)$.  We use $2\, 000$ points for the stochastic integration.
 
We compare the proposed robust norm--loss pair \eqref{eq: final_norm}--\eqref{eq: loss final} with the standard FOSLS formulation used in the literature \cite{cai2020deep}, which employs the unweighted $H_0^1 \times H(\Div)$ norm \eqref{eq:standard_norm} and the loss \eqref{eq: loss} with $C_{\mathcal{L}} = 1$. We evaluate problem \eqref{eq: interface_problem} across extreme contrast regimes, $\kappa_0 \in \{10^{-6},\,10^{-3},\,10^{3},\,10^{6}\}$. For each case, we train the neural network using the robust loss for $2\,500$ iterations with a learning rate of $10^{-4}$. At each iteration, we evaluate both loss formulations alongside the corresponding energy norms. 

\begin{figure}[!ht]
    \centering
    \begin{subfigure}{0.49\linewidth}
        \includegraphics[width = \textwidth]{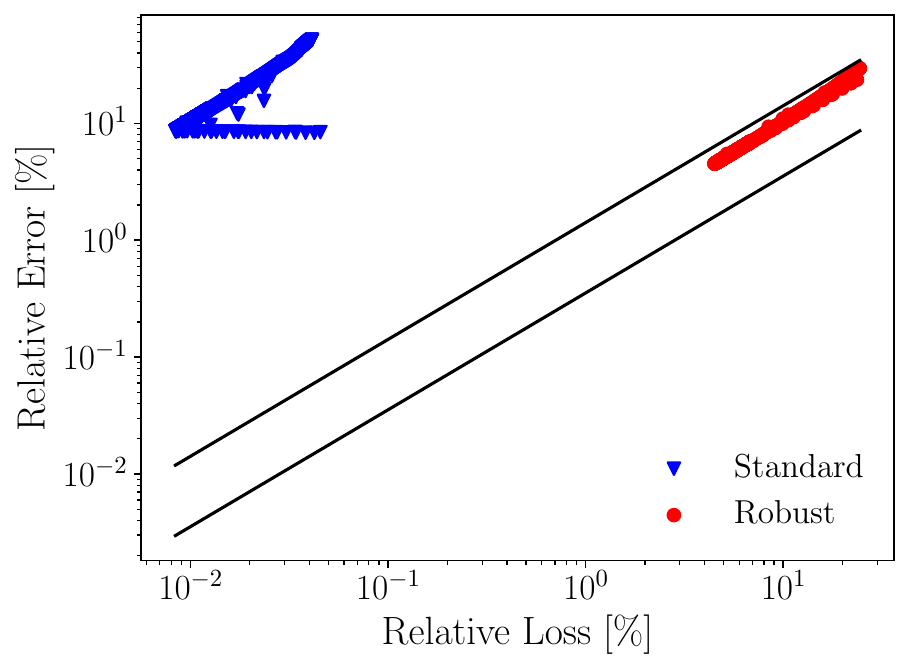}
        \caption{$\kappa_0 = 10^{-6}$}
    \end{subfigure}\hfill
    \begin{subfigure}{0.49\linewidth}
        \includegraphics[width = \textwidth]{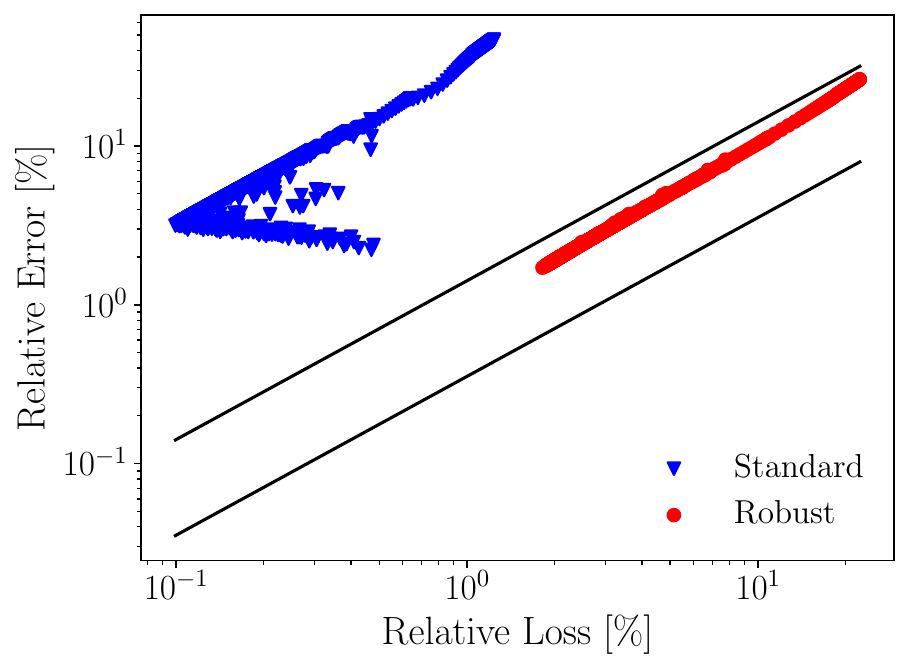}
        \caption{$\kappa_0 = 10^{-3}$}
    \end{subfigure}\\
    \begin{subfigure}{0.49\linewidth}
        \includegraphics[width = \textwidth]{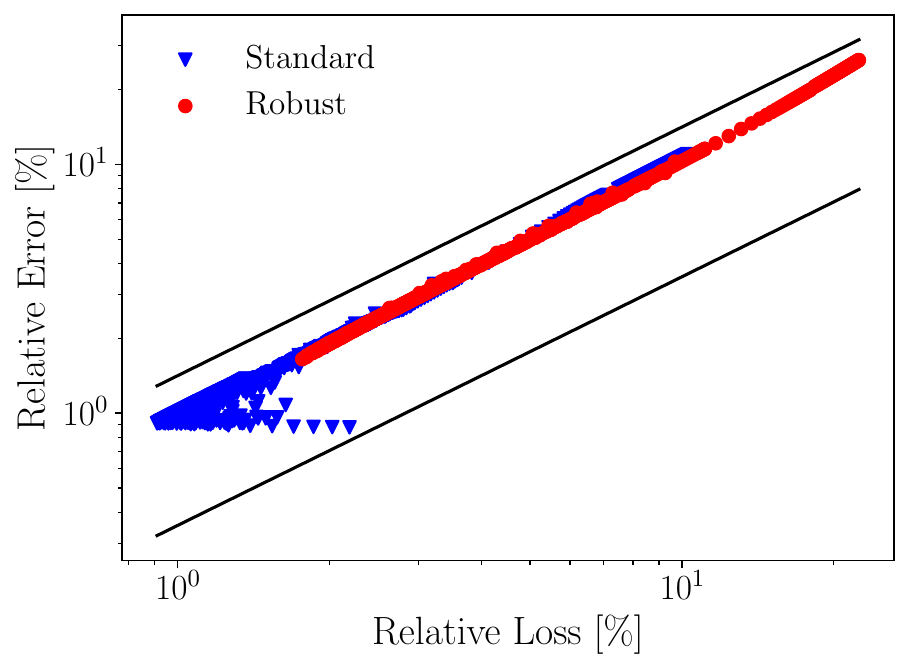}
        \caption{$\kappa_0 = 10^{3}$}
    \end{subfigure}\hfill
    \begin{subfigure}{0.49\linewidth}
       \includegraphics[width = \textwidth]{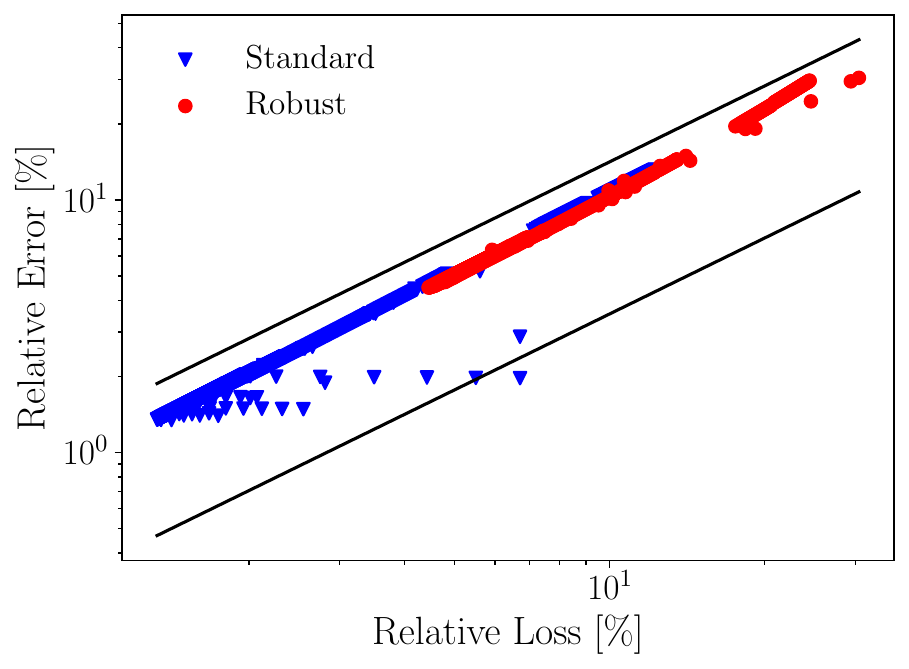}
        \caption{$\kappa_0 = 10^{6}$}
    \end{subfigure}
    \caption{Comparison between robust and non-robust FOSLS formulation. The relative loss is defined as the square root of the loss over the error norm. The loss is evaluated with the same fine integration rule as the error. The black lines correspond to the robustness constant bounds derived in Section \ref{sec: FOSLS}. \label{fig: robustness}}
\end{figure}

 Figure~\ref{fig: robustness} illustrates the correlation between the loss and the error norm for all tested values of $\kappa_0$. For  the standard formulation, we observe a strong $\kappa_0$-dependent relationship between loss and error, which severely deteriorates the correlation. Conversely, with our proposed approach, not only do we obtain a robust error estimator as dictated by the theory, but we indeed observe an almost linear dependence between the loss and the norm of the error. This confirms the theoretical analysis in Section~\ref{sec: FOSLS}: the proposed loss acts as a robust error estimator.  
The approximation of the energy-norm Poincar\'{e} constant is displayed in Figure \ref{fig: Poincare}, showing good agreement with the reference values across all cases.

\begin{figure}[!ht]
    \centering \includegraphics[width = 0.5\linewidth]{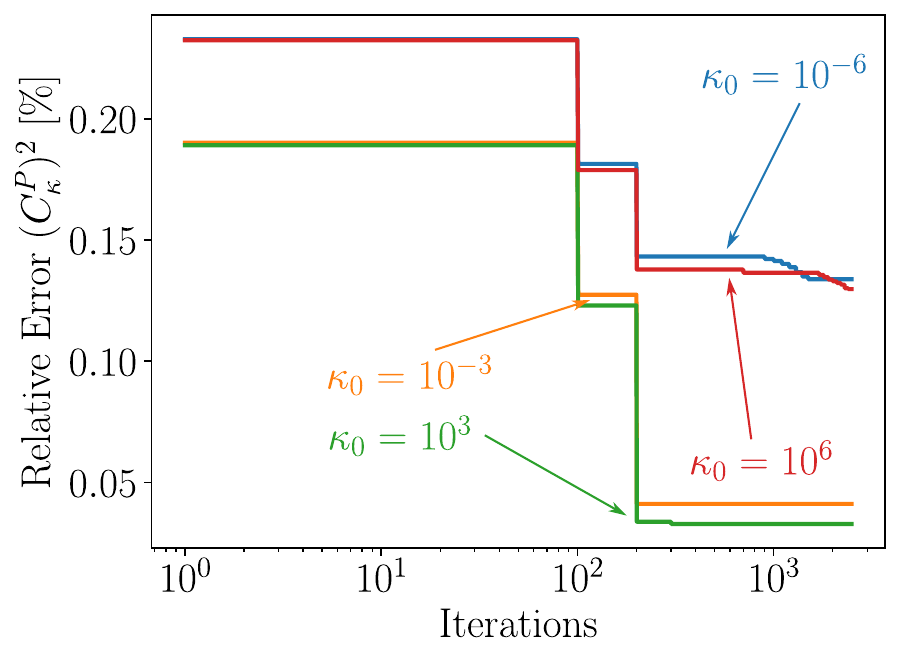}
    \caption{Relative errors in the energy-norm Poincar\'{e} constant. The reference value is computed by solving numerically the transcendental equation in \ref{Appendix-poincare}. \label{fig: Poincare}}
\end{figure}

\subsubsection{Quasi-Gibbs phenomenon}
In \cite{cai2024least}, the authors provide a constructive proof showing that ReLU networks ($\sigma(x)=\mathrm{ReLU}(x)$) can approximate characteristic functions in $\mathbb{R}^d$ over sufficiently regular domains without exhibiting Gibbs-type oscillations. This construction  assumes that the discontinuity set (location and geometry) is known \emph{a priori}. However, herein we allow the neural network to \emph{discover} the discontinuity location.  While the network can approximate this location, we often observe a quasi-Gibbs phenomenon due to the misalignment between the derivative discontinuity and its neural network approximation, an effect that is magnified when using a smooth activation function $\sigma$. This is particularly relevant because such oscillations enter directly into the loss functional, thereby requiring a more careful integration to control the variance.

To see this quasi-Gibbs phenomenon, we compare  our $\operatorname{ReQU}$-based discrete space versus a smooth neural network using $\sigma(x)=\tanh(mx)\in C^\infty, \, m>0,$ in the problem \eqref{eq: interface_problem} with $\kappa_0 = 3$. For the former space, we consider two cases: (a) the network defined in Section \ref{sec:1D_ex} and (b) a neural network  with $L=2$ hidden layers and $n_1=n_2= 16$. For the latter, we consider $m \in \btheta$, i.e., a parameter to be optimized, initialized with $m=50$.  Figure~\ref{fig: gradient_error} shows the results. The smooth network exhibits a noticeable quasi-Gibbs phenomenon localized near the interface. This behavior was observed throughout all iterations, a pattern also observed with other $\sigma\in C^\infty$ functions. In contrast, the $\operatorname{ReQU}$ network exhibits only mild oscillations beyond those inherent to its piecewise structure. Even with more hidden layers, after some iterations, the quasi-Gibbs phenomenon is dampened considerably. To quantify this phenomenon near the interface, we measure the Total Variation (TV) of the gradient error,  $\|\nabla(\nabla(u^*-u_{\mathrm{NN}}))\|_{L^1}$ restricted to the interval $[0.4, 0.6]$ in the final iteration. The neural network with $\operatorname{ReQU}$ activation and $L=1$ layer has a TV of  $ 4.76$,  whereas the network with $L=2$ layers has a TV of $4.67$. The $\tanh$ network with $m \in \btheta$ has a TV of $9.29$.  A simple  calculation helps us interpret the TV--error. Since we are measuring the total variation of the gradient error, we model the jump in the gradient by considering:
\[
g_\varepsilon(x)=
\begin{cases}
0, & x<-\varepsilon,\\[3pt]
\dfrac{A}{2\varepsilon}(x+\varepsilon), & -\varepsilon \le x \le \varepsilon,\\[6pt]
A, & x>\varepsilon,
\end{cases}
\]
which represents an \emph{ideal} piecewise linear approximation of a step function with jump $A$. The total variation of this approximation is
\[
\int_{\mathbb{R}} | \nabla g_\varepsilon|
=
\int_{-\varepsilon}^{\varepsilon} \frac{A}{2\varepsilon}\,dx
=
A.
\]
Thus, even an ideal non-oscillatory approximation contributes a nonzero amount $A$ to the TV norm.  In our case, the jump in the gradient is $2\pi \left(1-1/3\right)\approx 4.188$,  which we can interpret as an excess error, with the excess error in the tanh case being around nine times larger than the ReQU implementation. Moreover, since the exact gradient is discontinuous whereas the approximation has a continuous gradient, the $L^\infty-$error is expected to be at least of order $A/2$. Hence, the errors observed near the discontinuity should not be interpreted as unexpectedly large.

\begin{figure}[!ht]
\begin{subfigure}{0.49\linewidth}
        \includegraphics[width = \linewidth]{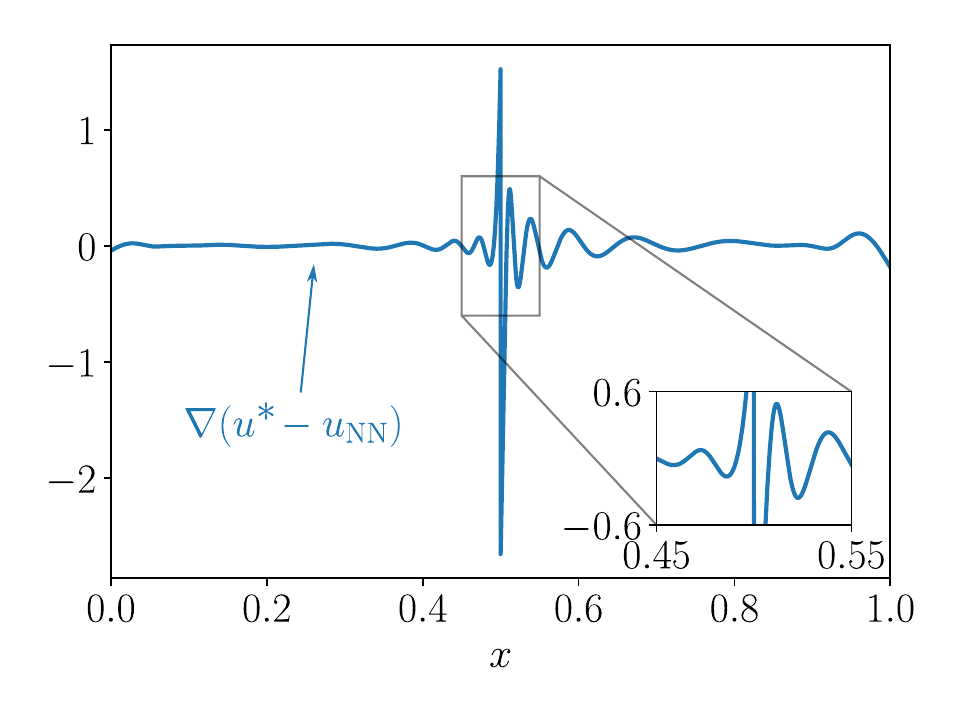}
        \caption{$\sigma =\tanh(mx)$, $L=1$, and $10\,000$ iterations}
    \end{subfigure}
    \begin{subfigure}{0.49\linewidth}
        \includegraphics[width = \linewidth]{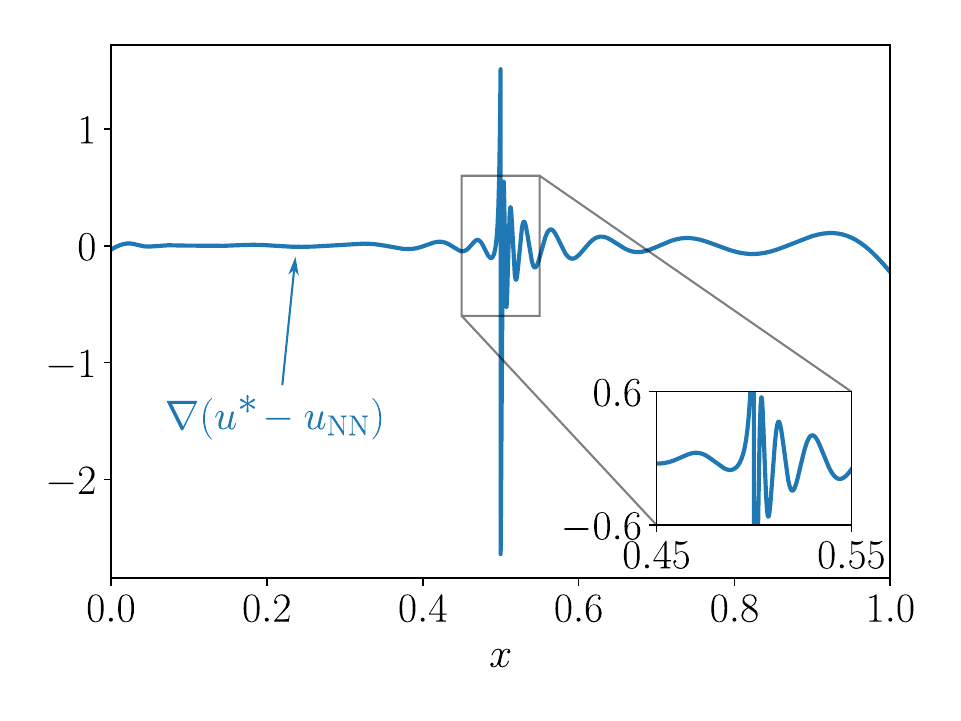}
        \caption{$\sigma =\tanh(mx)$, $L=1$, and $50\,000$ iterations}
    \end{subfigure}\\
\begin{subfigure}{0.49\linewidth}
        \includegraphics[width = \linewidth]{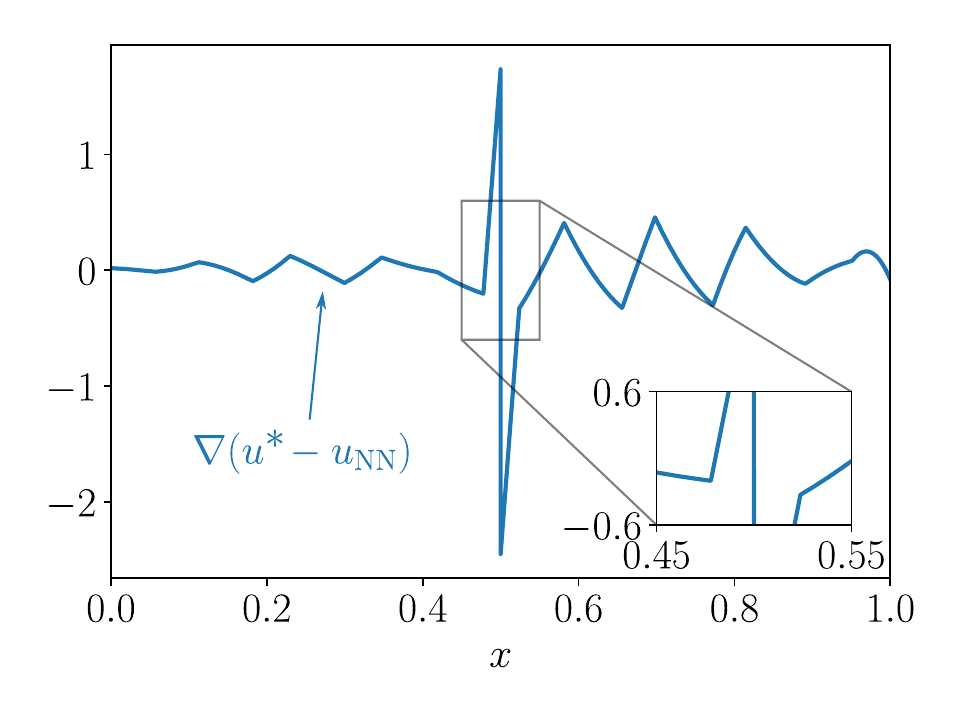}
        \caption{$\sigma =\operatorname{ReQU}$, $L=1$ layers, and 50 iterations}
    \end{subfigure}
    \begin{subfigure}{0.49\linewidth}
        \includegraphics[width = \linewidth]{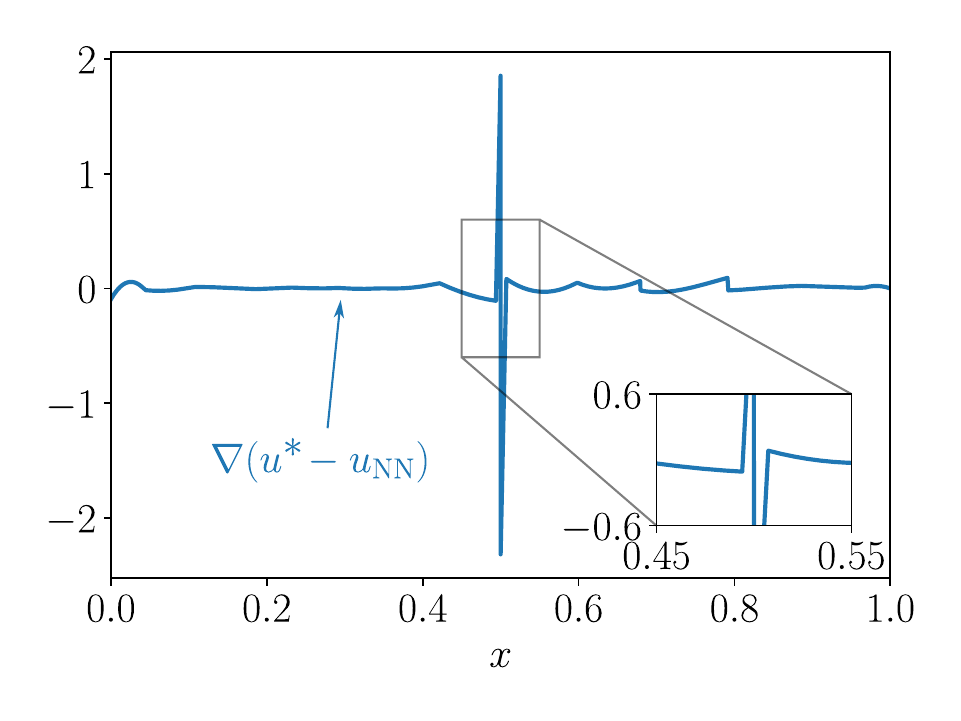}
        \caption{$\sigma =\operatorname{ReQU}$, $L=1$, and 150 iterations}
    \end{subfigure}\\
    \begin{subfigure}{0.49\linewidth}
        \includegraphics[width = \linewidth]{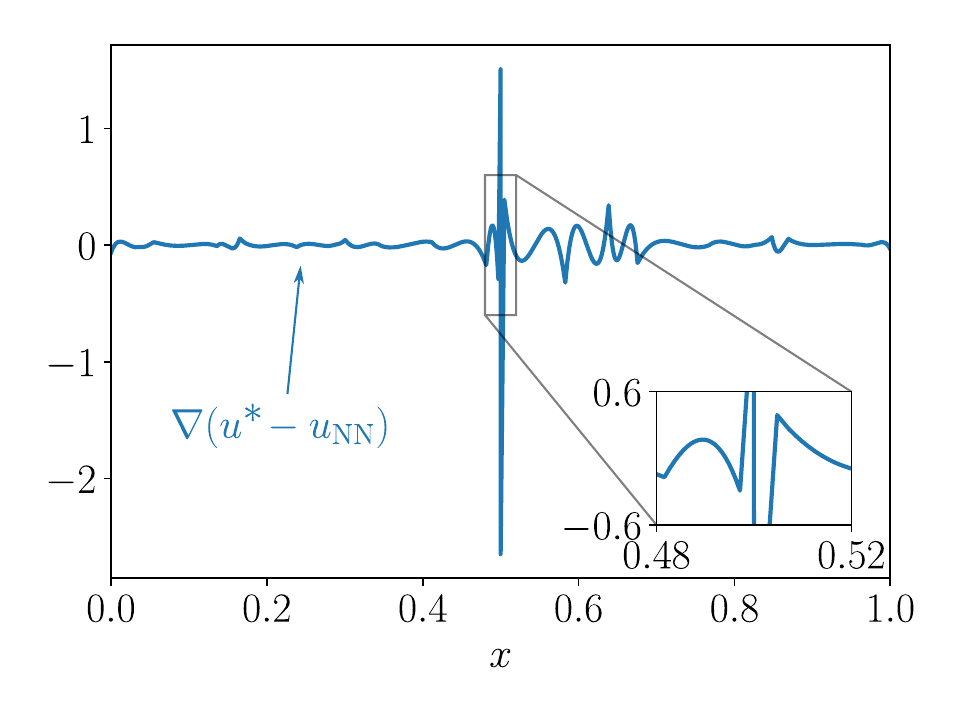}
        \caption{$\sigma =\operatorname{ReQU}(x)$, $L=2$, and 200 iterations}
    \end{subfigure}
    \begin{subfigure}{0.49\linewidth}
        \includegraphics[width = \linewidth]{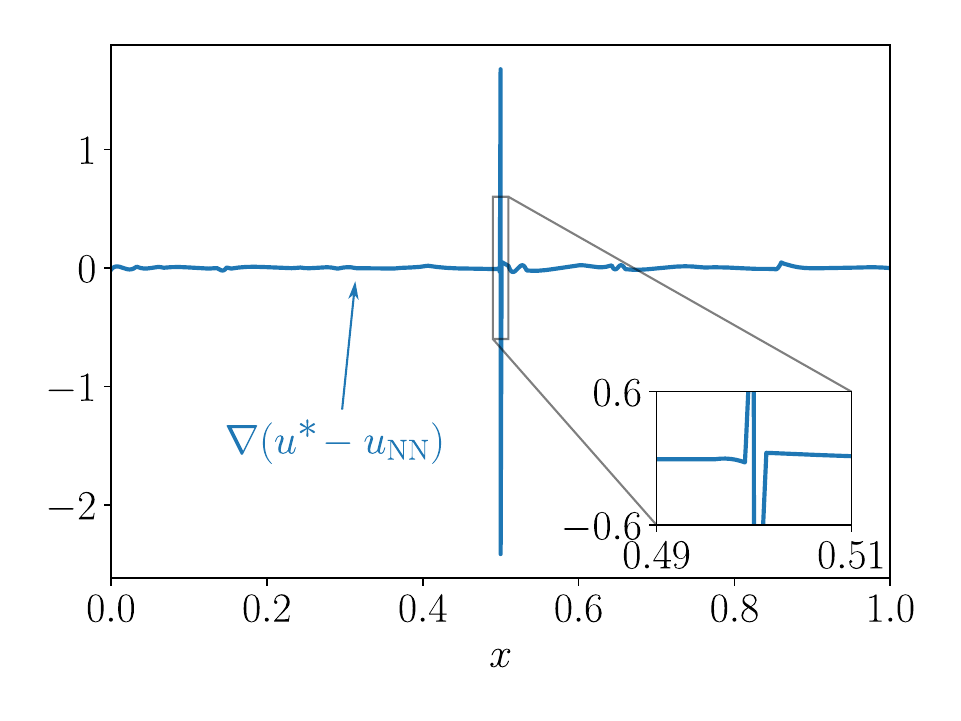}
        \caption{$\sigma = \operatorname{ReQU}(x)$, $L=2$ layers, and  $2\,000$ iterations}
    \end{subfigure}
    \caption{Gradient error $\nabla (u^*-u_{\text{NN}})$ with different neural network architectures.}\label{fig: gradient_error} 
\end{figure}

\subsubsection{Interface problem. Discontinuous diffusion term \label{sec:JamieInterface}}
We now train the neural network for $2\,500$ iterations to solve the problem \eqref{eq: interface_problem}. During the last $1\,000$ iterations we apply an exponential decay to the learning rate with a factor of $0.995$ per iteration. Figure \ref{fig: interface_approximation} shows the loss evolution and final approximations obtained. The pairs $(u_{\text{NN}}, \bfq_\text{NN})$ provide good approximations of $(u^*, \bfq^*)$. 
\begin{figure}[!ht]
     \begin{subfigure}{0.49\linewidth}
     \includegraphics[width = \linewidth]{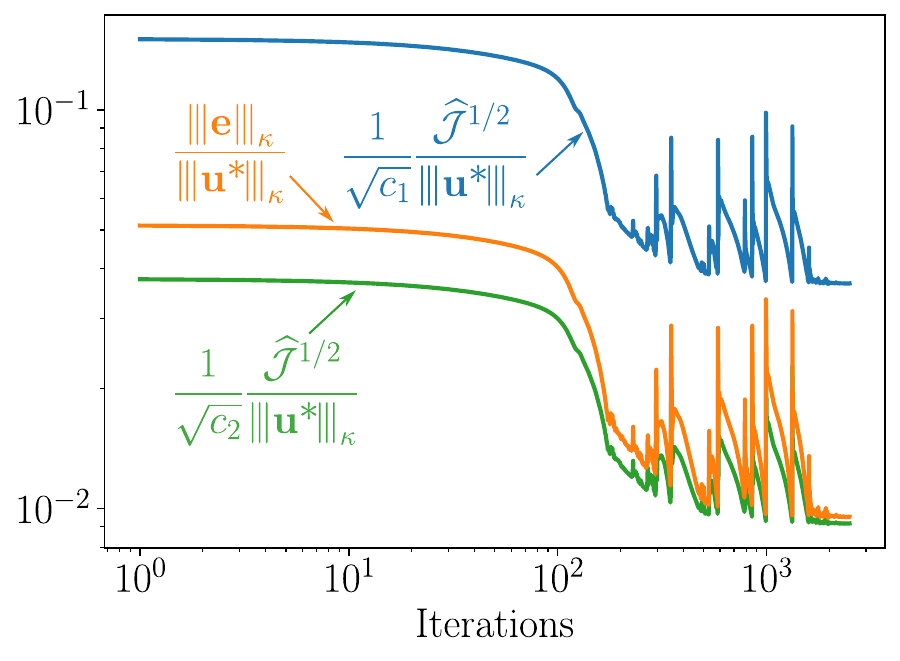}
     \caption{Loss-Error robustness \label{fig:loss_vs_error}}
    \end{subfigure}
    \begin{subfigure}{0.49\linewidth}
     \includegraphics[width = \linewidth]{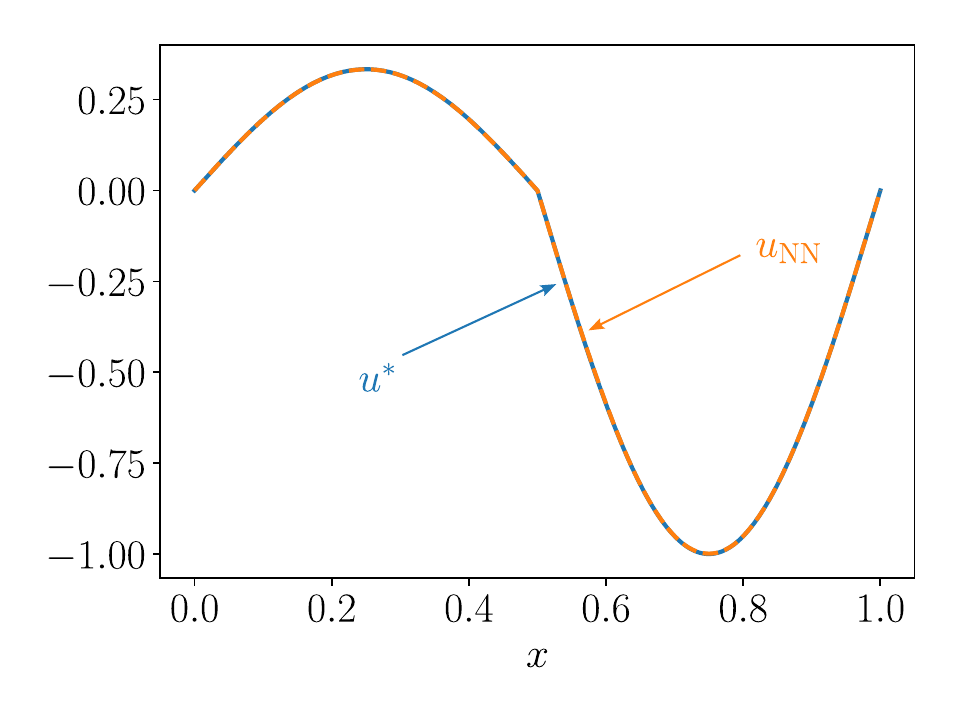}
      \caption{Approximation of $u^*$}
    \end{subfigure}\\
    \begin{subfigure}{0.49\linewidth}
    \centering
    \includegraphics[width = \linewidth]{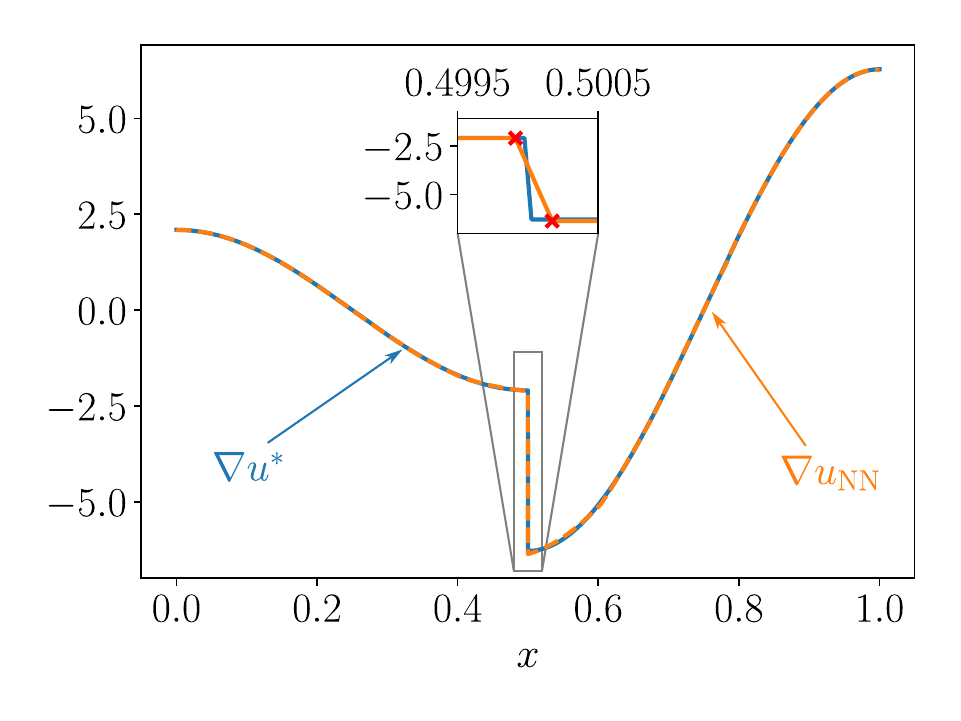}
    \caption{Gradient approximation  of $\nabla u^*$ \label{fig:grad_u}}
    \end{subfigure}
    \begin{subfigure}{0.49\linewidth}
    \centering
        \includegraphics[width =  \linewidth]{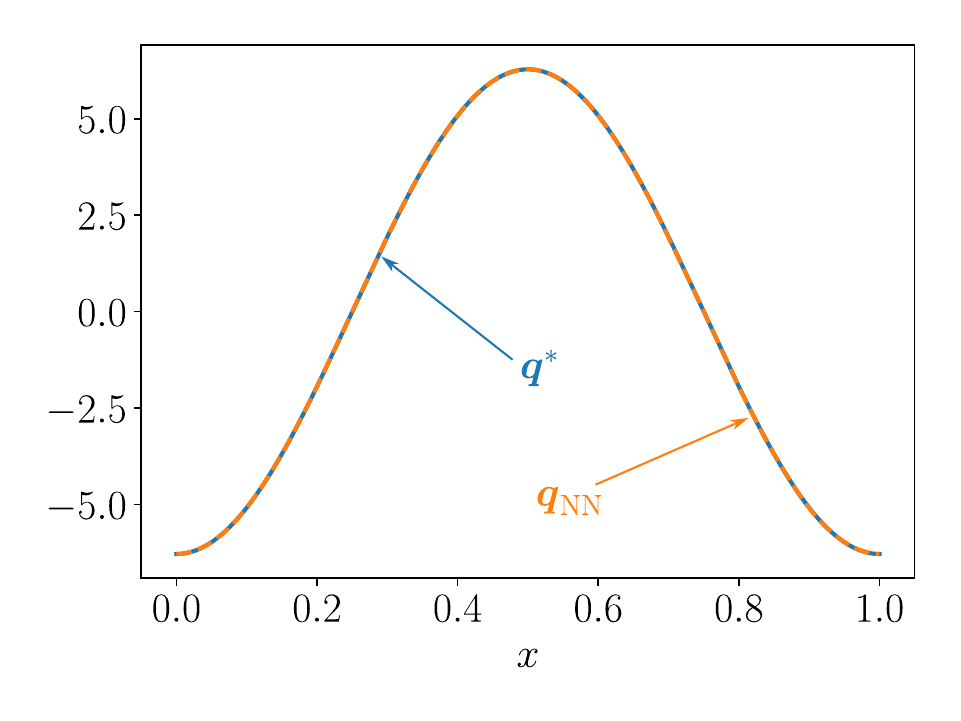}
        \caption{Flux approximation of $\bfq^*$}
    \end{subfigure}
    \caption{Loss and error evolution and final approximations. In panel (a), we denote $\mathbf{e} = (u^*, \bfq^*)-(u_{\text{NN}}, \bfq_{\text{NN}})$ and $\mathbf{u}^* = (u^*, \bfq^*)$.  The validation loss $\widehat{\mathcal{J}}$ was computed with a fine integration rule. The red crosses in panel (c) correspond to the evaluation of the gradient at the breaking points closest to the interface.  The relative error in $H_{0, \kappa }^1$ is $0.82\%$ and the relative error in $H(\Div,\kappa)$ is $0.98\%$. \label{fig: interface_approximation}}
\end{figure}
 Figure~\ref{fig: interface_errors} shows the relative energy norms in both $u$ and $\bfq$. Notice that across all iterations, the errors attained in $\bfq$ are much smaller than the errors in $u$. This occurs because the discontinuity in the exact solution is located in $u$ and it is therefore more difficult to approximate.

\begin{figure}[!ht]
\centering
   \includegraphics[width =0.49\linewidth]{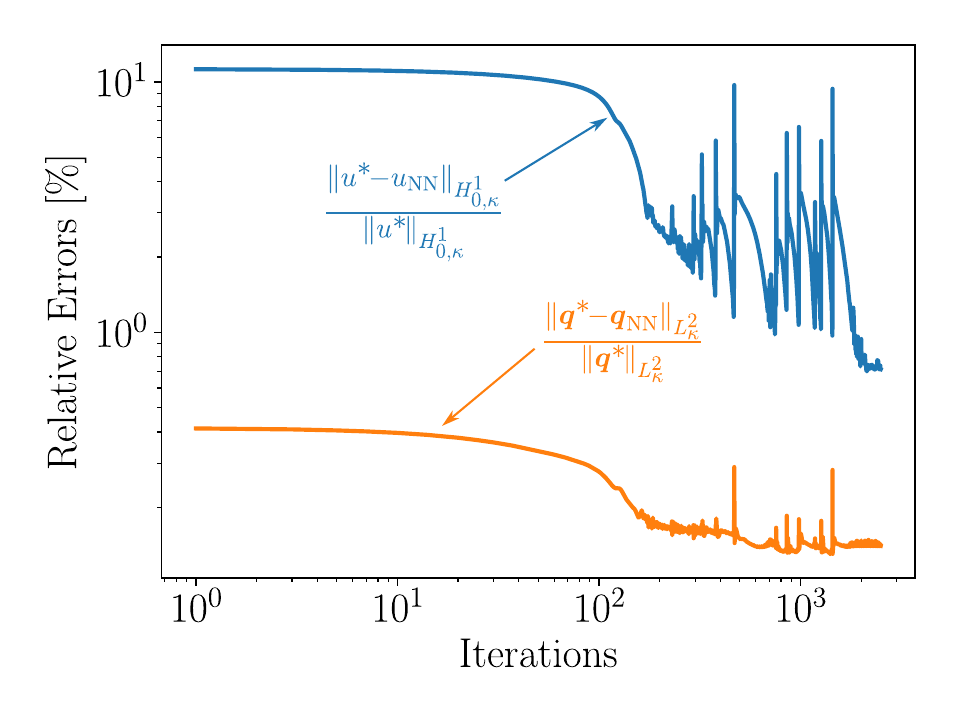}
    \caption{Relative errors in the energy norms of $u$ and $\bfq$. \label{fig: interface_errors}}
\end{figure}

\subsection{Two-dimensional experiments}
We consider the domain $\Omega = (0,1)^2$.  We  consider a neural network space with  $L=2$ hidden layers and $n_1=n_2 =32$. The least-squares problem and the energy-norm Poincaré constant estimation are performed in the same manner as in the 1D case. The Dirichlet lifting is $g_D = x(1-x)y(1-y)$. The initial learning rate is taken as $10^{-3}$, which decays over the last  $1\,000$ iterations at a rate of $0.995$. We employ a $100\times 100$ mesh for the stochastic integration. 
For the error evaluation, we approximate the norms using a tensor-product trapezoidal rule on a uniform Cartesian grid.  We consider two interface problems that differ in the component of the solution that exhibits less regularity: either the gradient $\nabla u^*$ or the flux $\bfq^*$.

\subsubsection{Transmission problem over a circular interface with discontinuous solution gradient}
We consider a problem given by a discontinuous coefficient defined across a circular interface:
\[
\kappa=
\begin{cases}
1, & |(x,y)- (1/2, 1/2)|\leq 1/4,\\[4pt]
3, & |(x,y)-(1/2, 1/2)|> 1/4.
\end{cases}
\]
For the exact solution, we select $u^* = \sin( 2 \pi x) \sin (2 \pi y) (|(x,y)-(1/2, 1/2)|^2- (1/4)^2 ) /\kappa$. With this setup, the flux $\bfq^*$ is everywhere continuous while the gradient $\nabla u^*$ exhibits a jump discontinuity at the circular interface.  We train a neural network for $25\,000$ iterations.  

Figure~\ref{fig:circular_interface_results} shows the final results.  Figures~\ref{fig: gradu_error_circ}--\ref{fig: q_error_circ} provide a direct comparison of \(u_{\text{NN}}\) and \(\bfq_{\text{NN}}\) through the same quantity $\kappa^{1/2} |\nabla u^*| = \kappa^{-1/2} |\bfq^*|$. The approximation of \(\nabla u^*\) is less accurate than that of \(\bfq^*\), as it exhibits a discontinuity, which requires a precise positioning of the breaking curves to properly approximate the interface. 
Figure~\ref{fig:circular_gradient} shows the directional derivatives of $u$ along various cross-sections orthogonal to the interface, as well as the breaking curves associated to the second hidden layer $\mathcal{S}^2 = \{\mathcal{S}^{(2)}_j\}_{j=1}^{n_2}$, where  $\mathcal{S}^{(2)}_j$ is defined as in \eqref{eq:breaking_surfaces}. The quasi-Gibbs phenomenon is negligible across all observed sections.

\begin{figure}[!ht]
    \centering
    \begin{subfigure}{0.49\linewidth}
         \includegraphics[width=\linewidth]{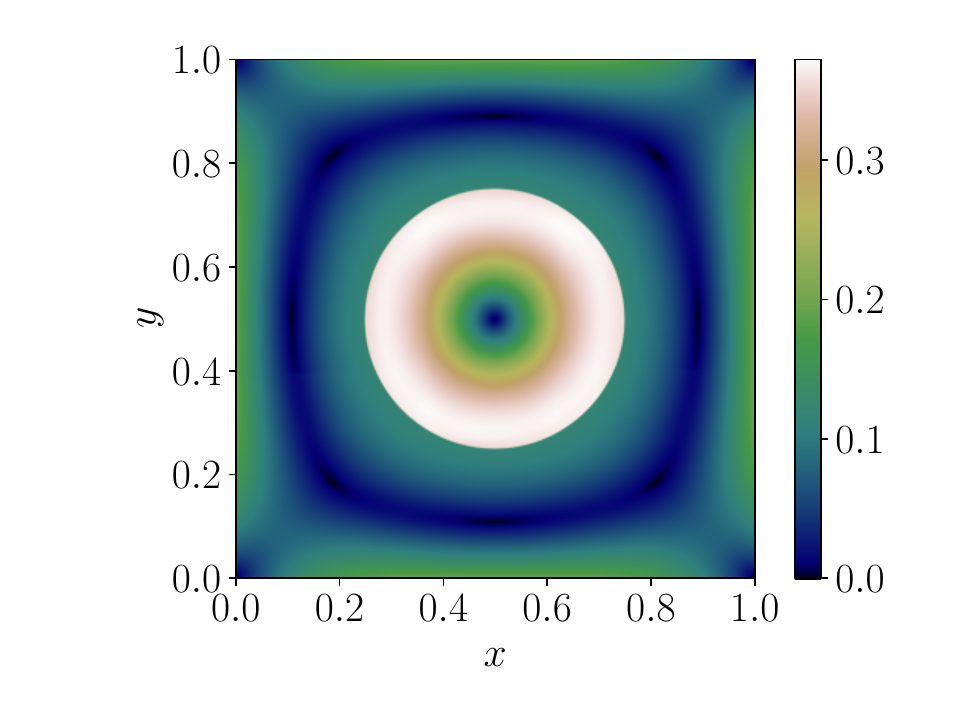}
         \caption{$|\nabla u_{\text{NN}}|$}
    \end{subfigure}
    \begin{subfigure}{0.49\linewidth}
         \includegraphics[width=\linewidth]{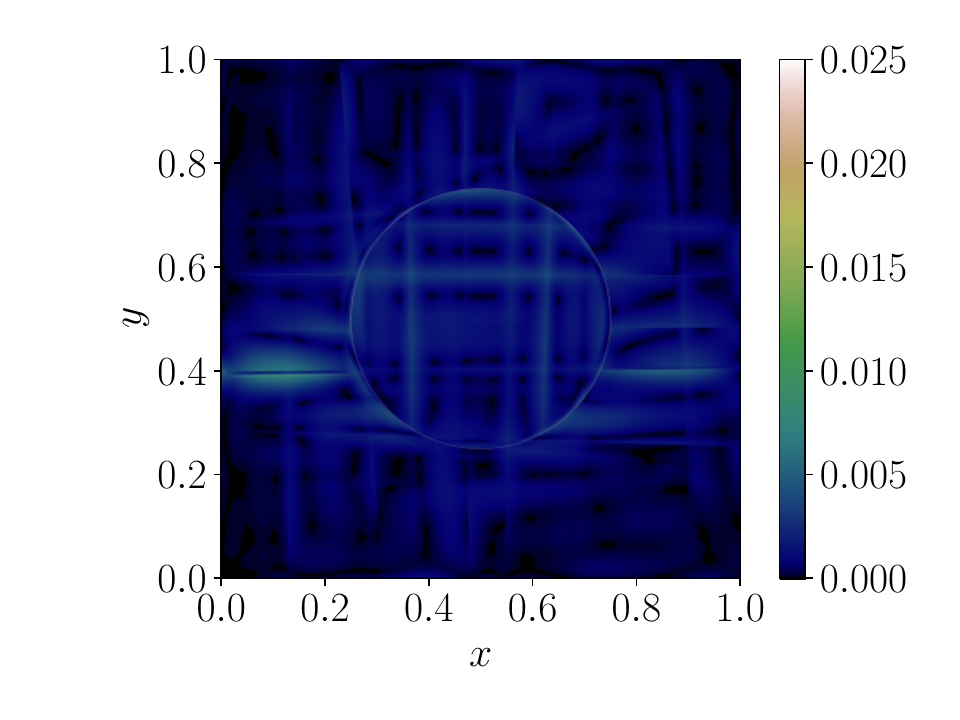}
          \caption{$|\kappa^{1/2}\nabla(u^*-u_{\text{NN}})|$ \label{fig: gradu_error_circ}}
    \end{subfigure}\\
    \begin{subfigure}{0.49\linewidth}
        \includegraphics[width=\linewidth]{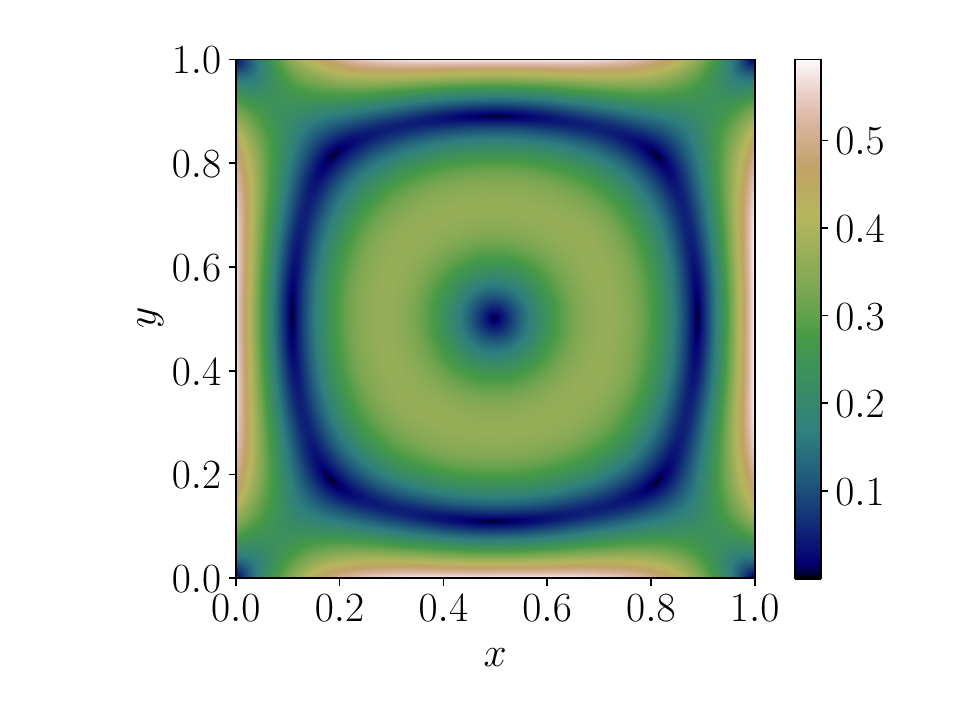}
        \caption{$|\bfq_{\text{NN}}|$ \label{fig: q_error_circ}}
    \end{subfigure}
     \begin{subfigure}{0.49\linewidth}
         \includegraphics[width=\linewidth]{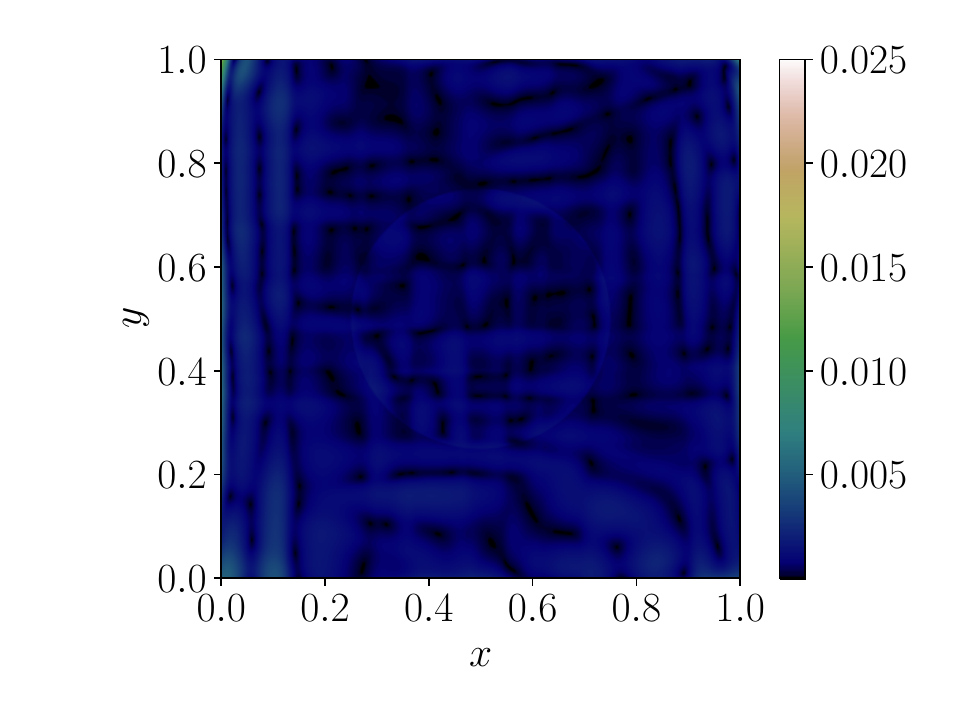}
          \caption{$|\kappa^{-1/2}(\bfq^*-\bfq_{\text{NN}})|$}
    \end{subfigure}
    \caption{Gradient and flux approximations for a problem with a discontinuity in the gradient. Following the energy norms, the errors are weighted to have a comparable measure of the error.  In panel (b), we capped the magnitude to 0.025 for proper visualization of the errors. The final relative error in $H_{0,\kappa}^1$ is $1.83\%$ and in $H(\Div,\kappa)$ is $0.32\%$.}
    \label{fig:circular_interface_results}
\end{figure}

\begin{figure}[!ht]
    \centering
    \begin{subfigure}{0.49\linewidth}
         \includegraphics[width=\linewidth]{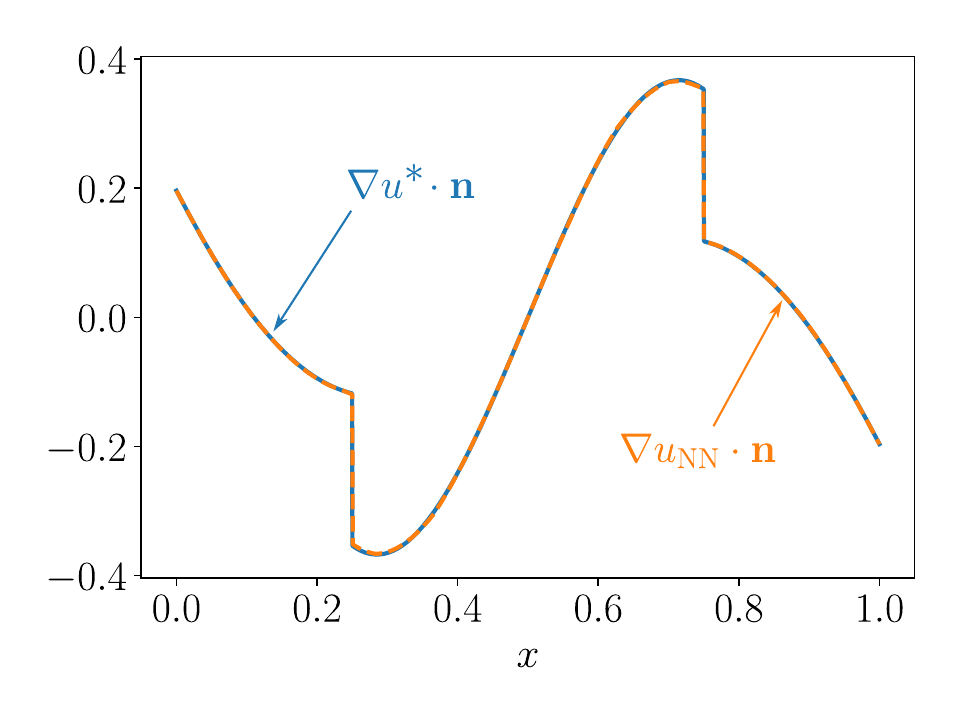}
         \caption{$\mathbf{n} = (1,0)$, at $y =0.5$}
    \end{subfigure}
    \begin{subfigure}{0.49\linewidth}
         \includegraphics[width=\linewidth]{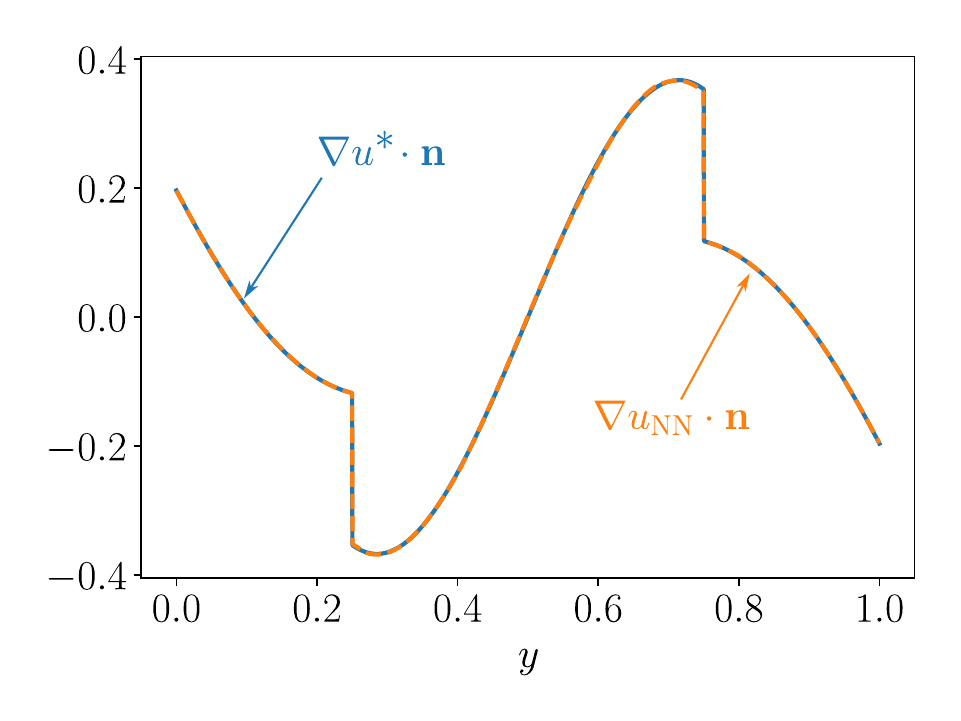}
          \caption{$\mathbf{n} = (0,1)$, at $x =0.5$ \label{fig: gradu_xy}}
    \end{subfigure}\\
    \begin{subfigure}{0.49\linewidth}
        \includegraphics[width=\linewidth]{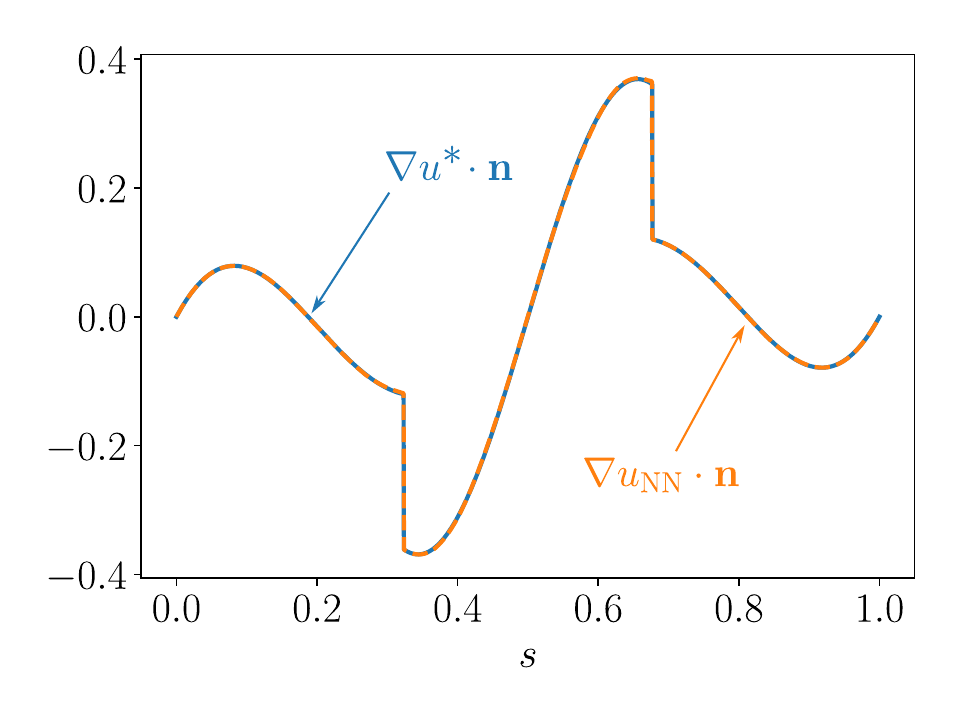}
        \caption{$\mathbf{n} = (1/{\sqrt2},1/{\sqrt2})$, at $ y=x$ \label{fig: grad_anti_diag}}
    \end{subfigure}
     \begin{subfigure}{0.49\linewidth}
         \includegraphics[width=\linewidth]{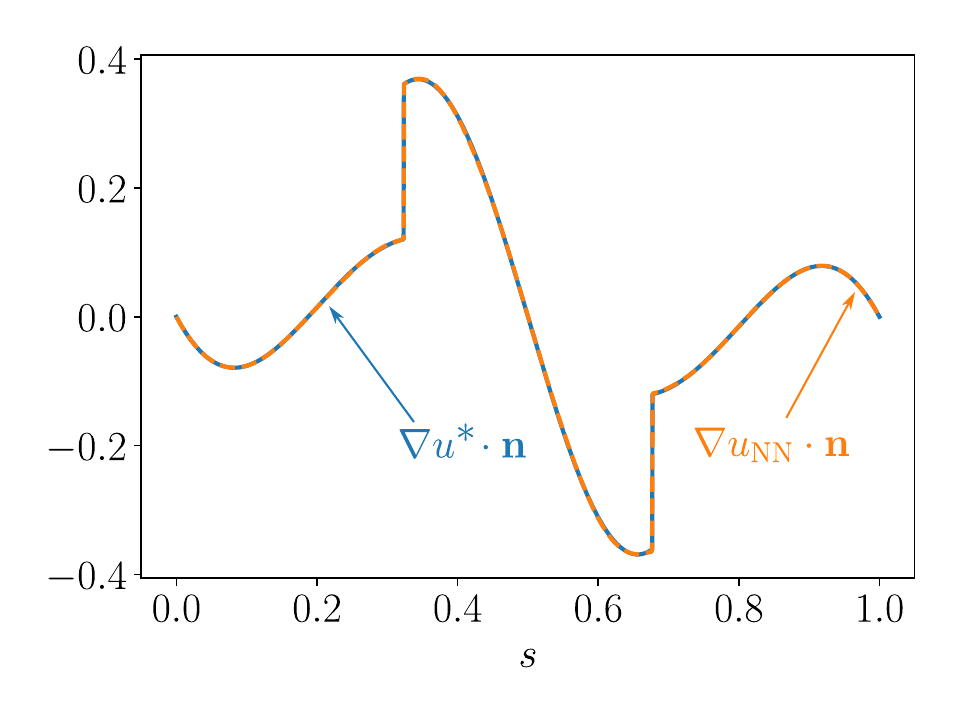}
          \caption{$\mathbf{n} = (-1/{\sqrt2},1/{\sqrt2})$, at $y=1-x$}
    \end{subfigure}
    \begin{subfigure}{\linewidth}
    \centering
         \includegraphics[width=0.5\linewidth]{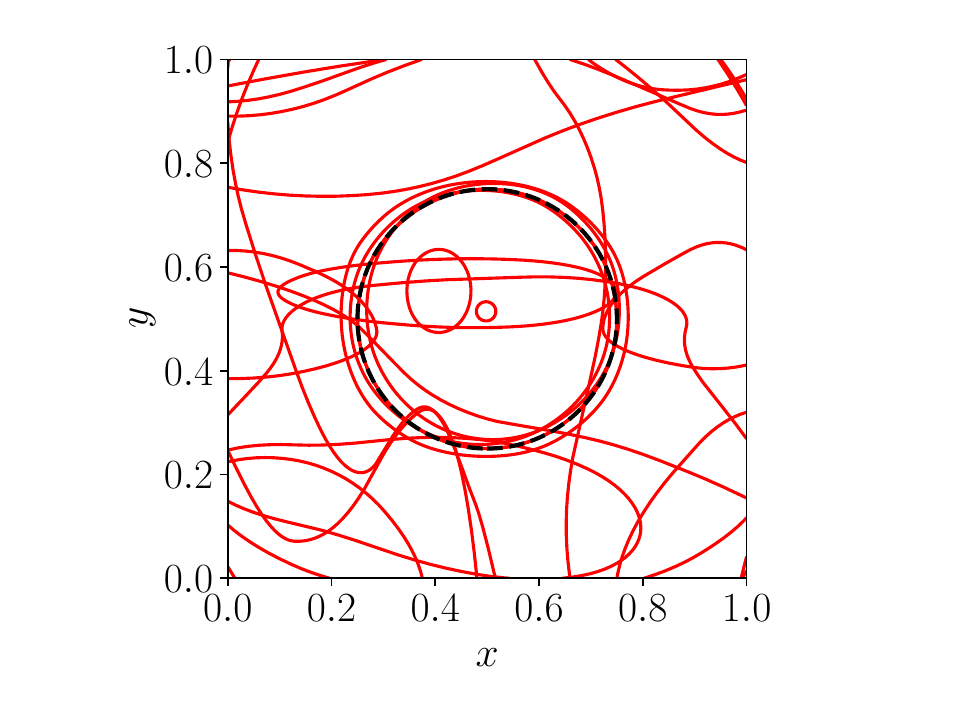}
          \caption{Breaking curves  $\mathcal{S}^{2}$\label{fig:breaking_curves_circ}}
    \end{subfigure}
    \caption{Directional derivative approximation on different cross-sections and the breaking curves $\mathcal{S}^{2}$. In panels (c) and (d), we use  $s\in [0,1]$  to denote the parameterization variable of the corresponding sections. Panel (e) shows in red the breaking curves of the last hidden layer, while the interface is represented by a dashed black line.}
    \label{fig:circular_gradient}
\end{figure}

\subsubsection{Transmission problem over a plane interface with discontinuous flux}
We now consider a problem with the following diffusion coefficient
\[
\kappa=
\begin{cases}
1, & (x, y)\in \bigl(0,\tfrac12\bigr) \times (0,1),\\[4pt]
3, & (x,y)\in \bigl(\tfrac12,1\bigr)\times (0,1).
\end{cases}
\]
For the exact solution, we select $u^* = (\cos (2\pi x)-1)\sin( \pi y)$. This setup induces a discontinuity in the component of the flux tangent to the interface  $\bfq^*_y$ rather than in the gradient $\nabla u^*$. Since $\partial_x u^*|_{x = 1/2} = 0$ the normal flux is continuous across the interface, ensuring that $\bfq^*$ remains in  $ H(\Div)$. We train a neural network for $10\, 000$ iterations. 

Figure~\ref{fig:plane_results} shows the final results. In contrast to the previous example, we now observe that  the approximation  \(\kappa^{-1/2}|\bfq^*|\) is less accurate than that of \(\kappa^{1/2}|\nabla u^*|\). This occurs because now the discontinuity appears in the tangential component of the flux $\bfq^*$ rather than on the gradient of the solution $\nabla u^*$. Figure~\ref{fig:plane_flux} further shows that the neural network has difficulties approximating the tangent component of the flux on different sections due to the discontinuities.

\begin{figure}[!ht]
    \centering
    \begin{subfigure}{0.49\linewidth}
         \includegraphics[width=\linewidth]{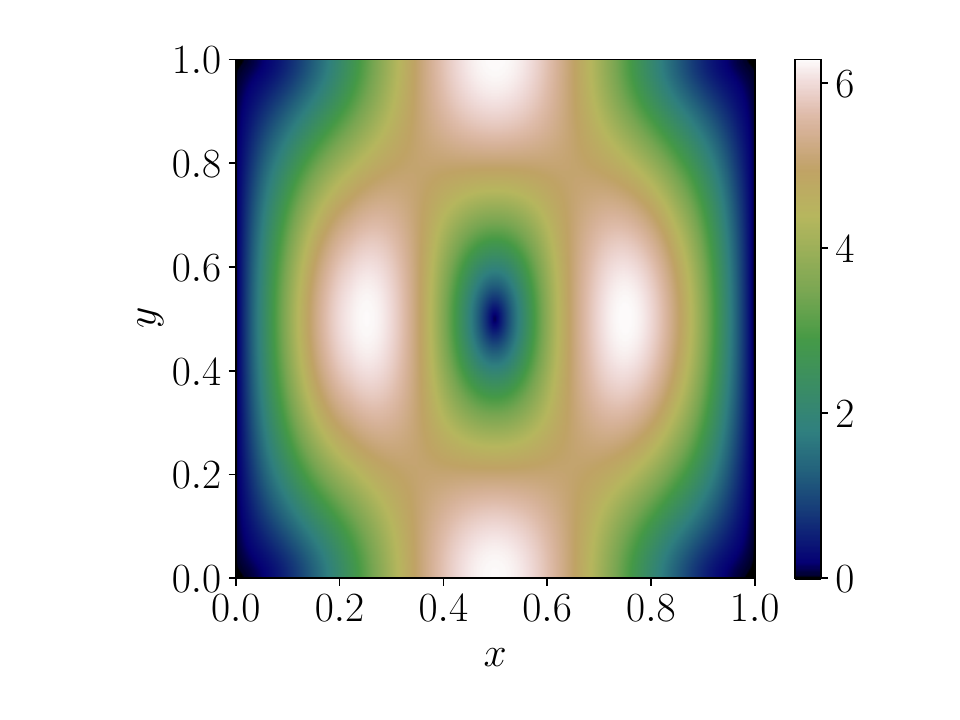}
         \caption{$|\nabla u_{\text{NN}}|$}
    \end{subfigure}
    \begin{subfigure}{0.49\linewidth}
         \includegraphics[width=\linewidth]{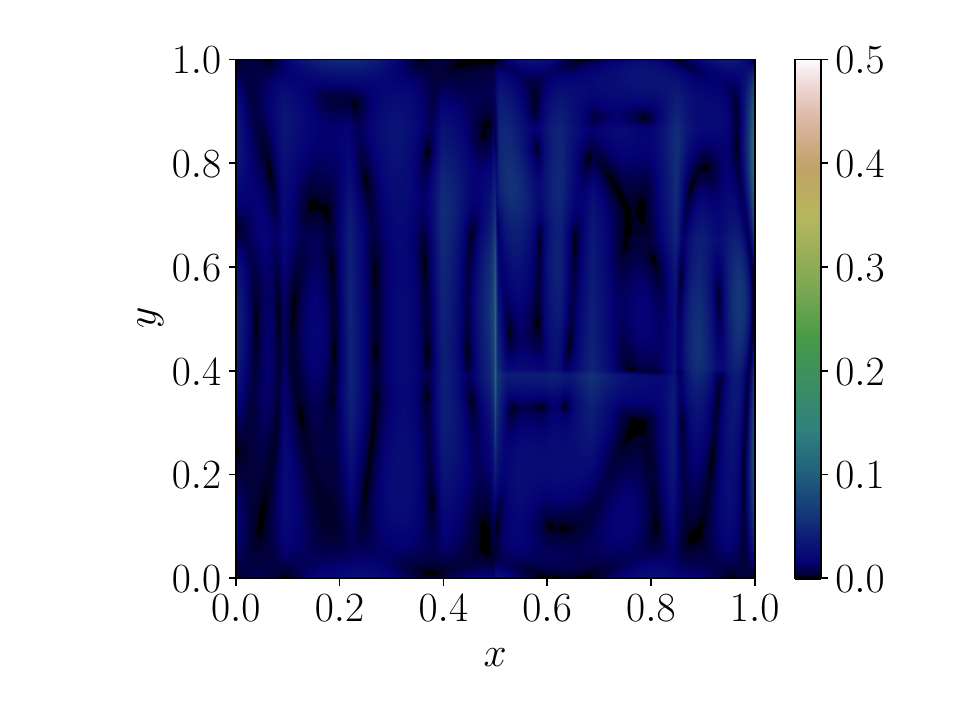}
          \caption{$|\kappa^{1/2}\nabla(u^*-u_{\text{NN}})|$}
    \end{subfigure}\\
    \begin{subfigure}{0.49\linewidth}
        \includegraphics[width=\linewidth]{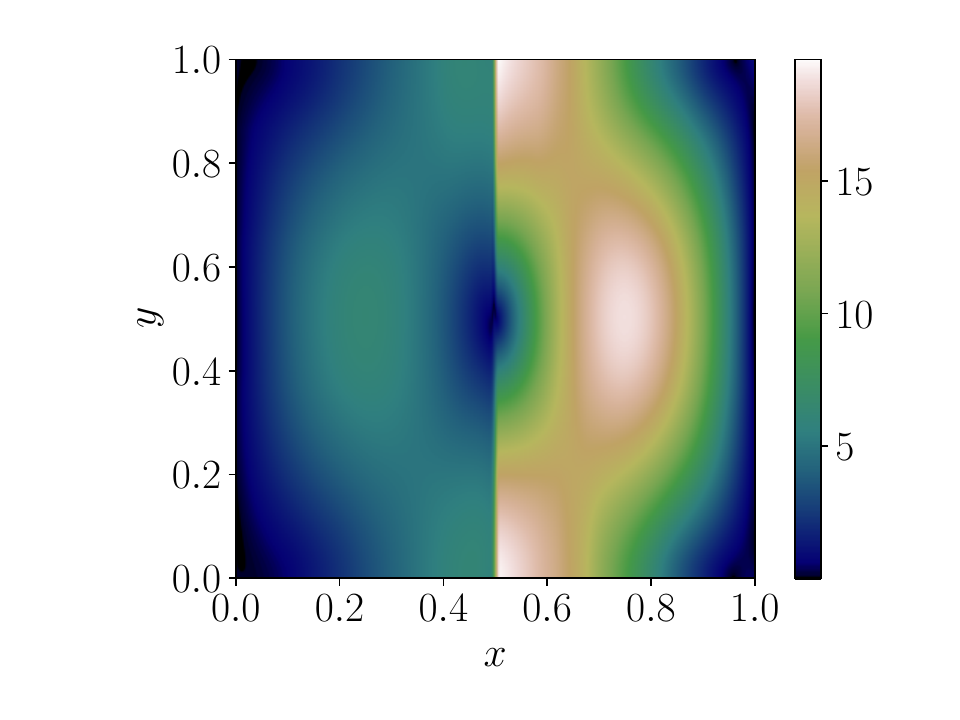}
        \caption{$|\bfq_{\text{NN}}|$}
    \end{subfigure}
     \begin{subfigure}{0.49\linewidth}
         \includegraphics[width=\linewidth]{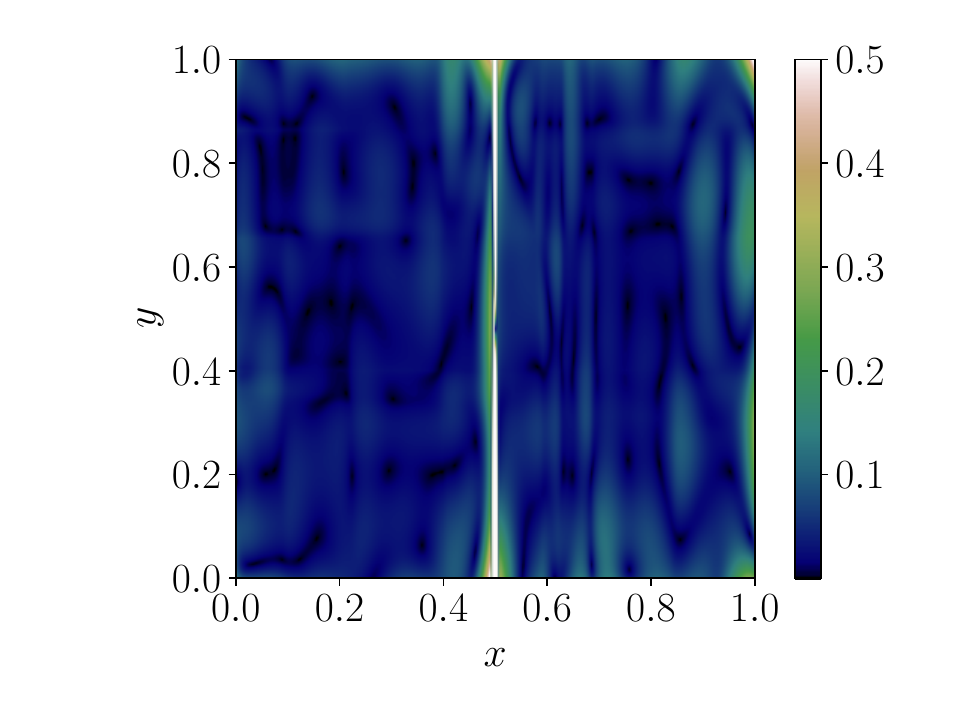}
          \caption{$|\kappa^{-1/2}(\bfq^*-\bfq_{\text{NN}})|$ \label{fig:flux_error}}
    \end{subfigure}
    \begin{subfigure}{0.49\linewidth}
        \includegraphics[width=\linewidth]{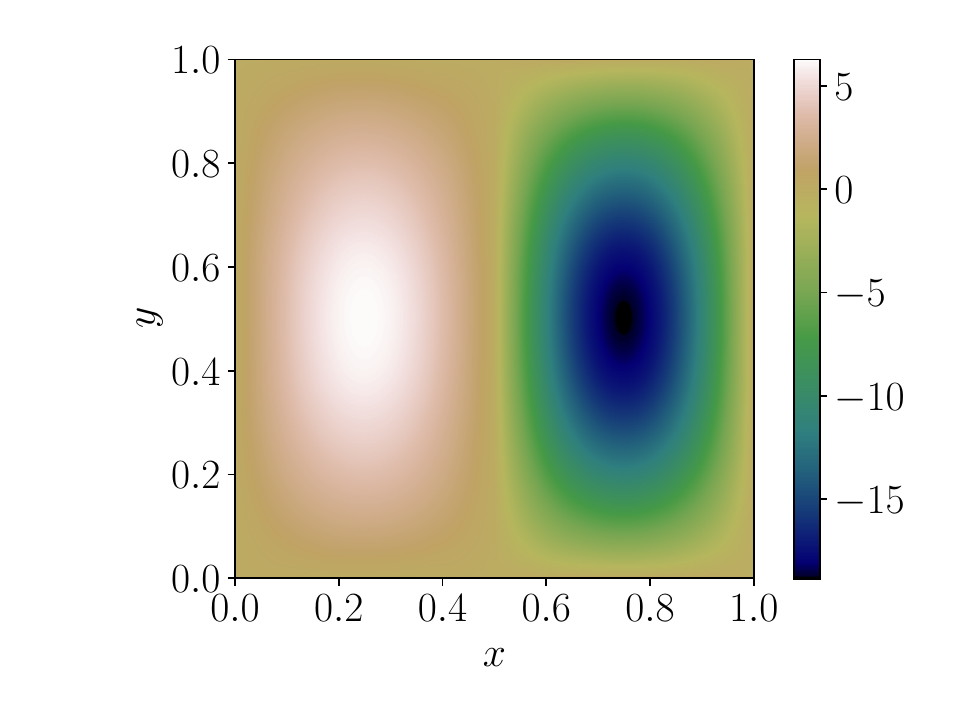}
        \caption{$\bfq_{\text{NN}}^{x}$}
    \end{subfigure}
     \begin{subfigure}{0.49\linewidth}
         \includegraphics[width=\linewidth]{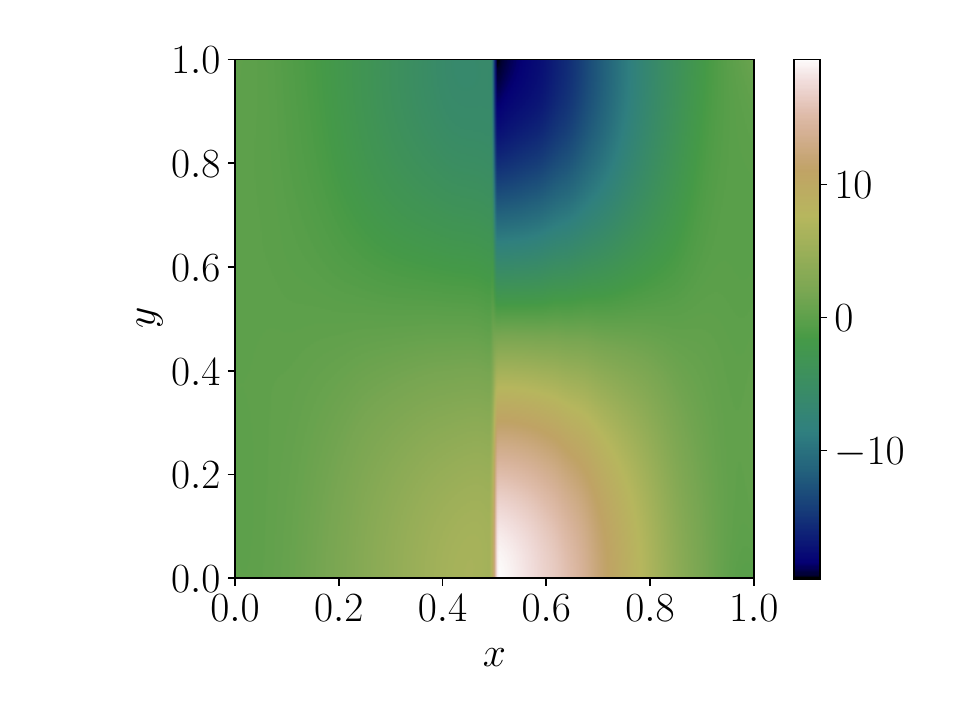}
          \caption{$\bfq_{\text{NN}}^{y}$ \label{fig:q_tangent}}
    \end{subfigure}
    \caption{Gradient and flux approximation for a problem with a discontinuity in the flux. Following the energy norms, the errors are weighted to have a comparable measure of the error. In panel (b), we capped the magnitude to 0.5 for proper visualization of the errors. The final relative error in $H_{0,\kappa}^1$ is $0.39\%$, and in $H(\Div, \kappa)$ is $2.02\%$.}
    \label{fig:plane_results}
\end{figure}

\begin{figure}[!ht]
    \centering
    \begin{subfigure}{0.49\linewidth}
         \includegraphics[width=\linewidth]{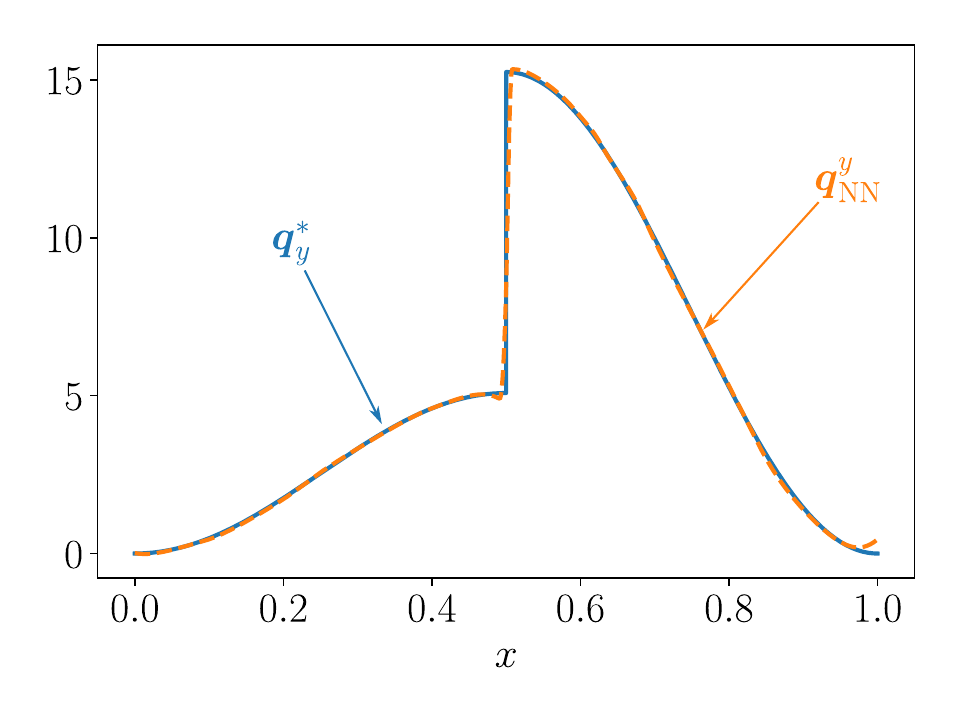}
         \caption{$\bfq^y$, at  $y=0.2$}
    \end{subfigure}
    \begin{subfigure}{0.49\linewidth}
        \includegraphics[width=\linewidth]{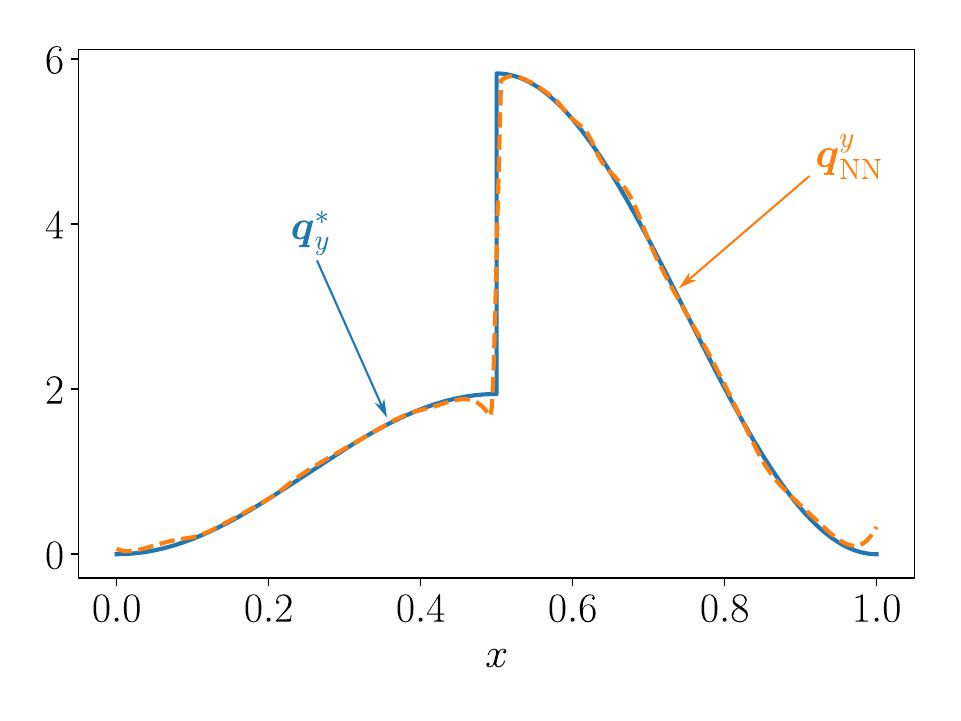}
           \caption{$\bfq^y$, at  $y=0.4$}
    \end{subfigure}\\
    \begin{subfigure}{0.49\linewidth}
       \includegraphics[width=\linewidth]{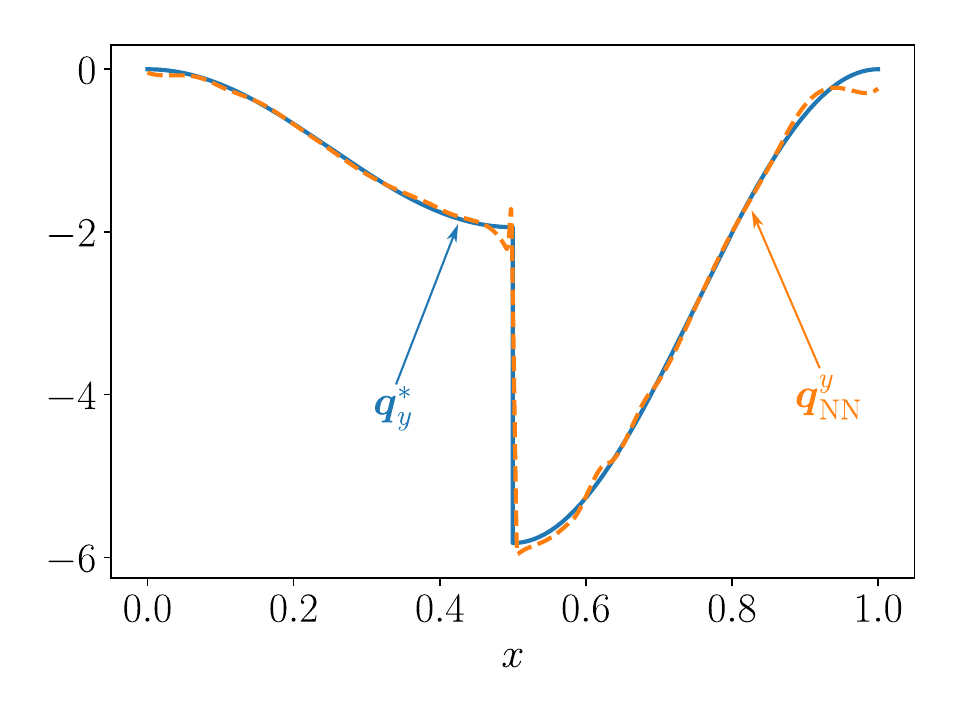}
         \caption{$\bfq^y$, at  $y=0.6$}
    \end{subfigure}
     \begin{subfigure}{0.49\linewidth}
        \includegraphics[width=\linewidth]{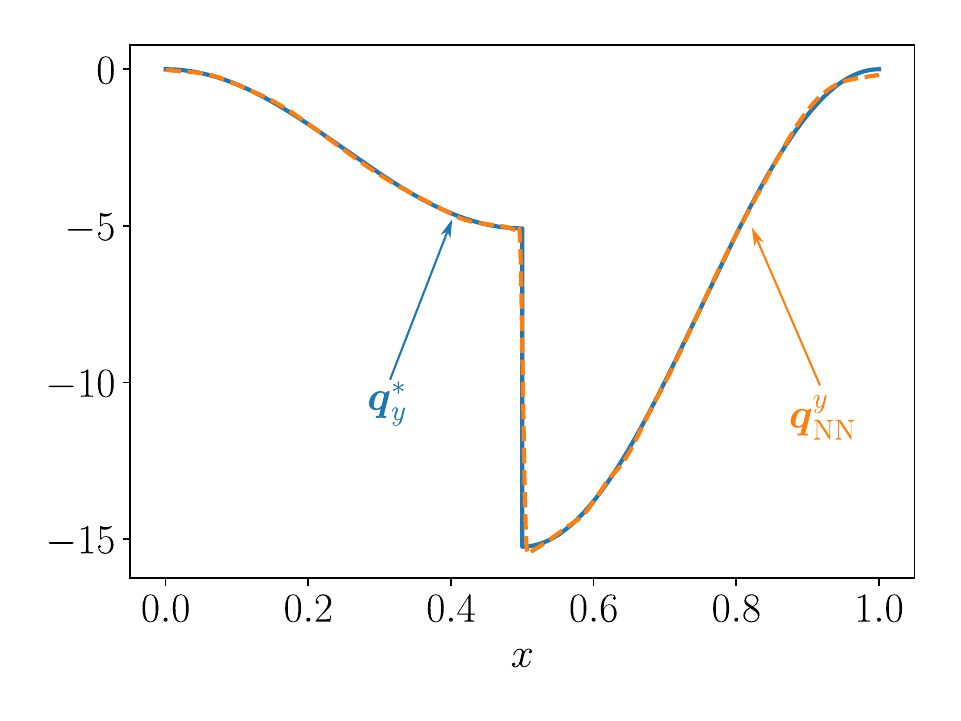}
           \caption{$\bfq^y$, at $y=0.8$}
    \end{subfigure}
    \begin{subfigure}{\linewidth}
        \centering
         \includegraphics[width=0.5\linewidth]{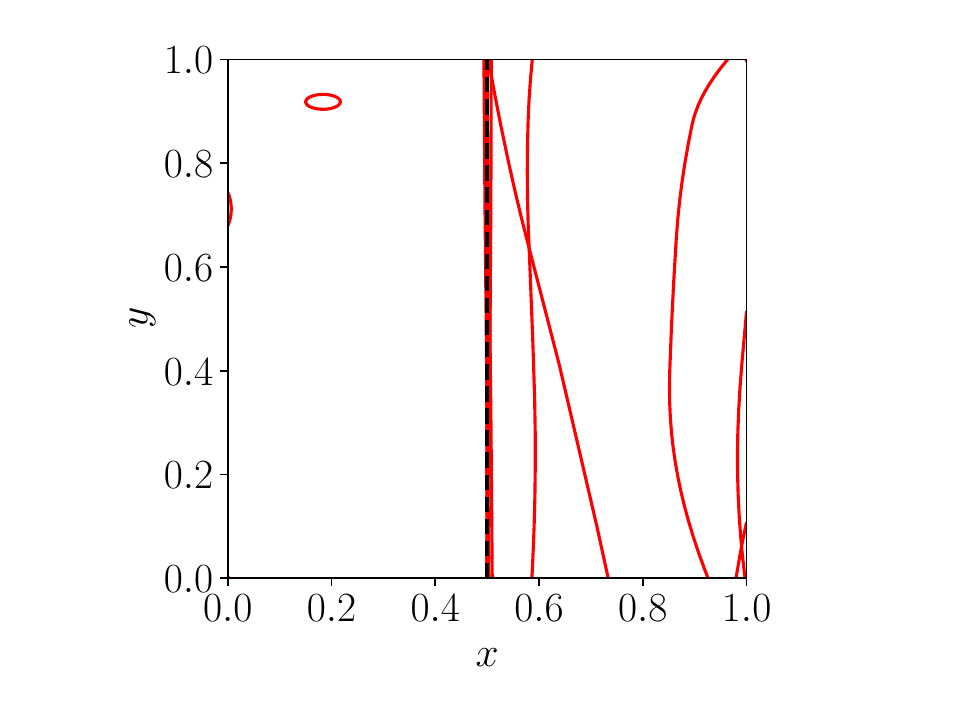}
        \caption{Breaking curves $\mathcal{S}^{2}$ \label{fig:breaking_curves_plane}}
    \end{subfigure}
    \caption{Approximation of the flux component tangent to the plane interface across different sections and breaking curves $\mathcal{S}^2$. Panel (e) shows in red the breaking curves of the last hidden layer, while the interface is represented by a dashed black line.}
    \label{fig:plane_flux}
\end{figure}

 These numerical examples demonstrate that discontinuities pose significant approximation challenges. In both cases, the discontinuous components exhibit larger relative errors than the smooth components of the solution.

\section{Conclusions}
In this work, we introduced a novel FOSLS functional and an associated robust energy norm for transmission problems. We also proposed a neural network training methodology that combines the optimality properties of least-squares solvers with the flexibility of gradient-based methods for searching over optimal trial spaces. Moreover, we showed that the gradient-based loss induced by the FOSLS functional yields a passive reduction in gradient variance, a property not satisfied by other standard weak formulations commonly used in the literature. We further observed that the neural approximation spaces accurately reproduce the associated Poincaré-type approximation constant, showing that the learned spanning functions capture the relevant structure of the solution space.

Our numerical experiments indicate that passive variance reduction plays an important role in stabilizing neural network training. We further illustrated the robustness of the proposed formulation for transmission problems, contrasting it with the standard FOSLS formulation. In addition, the use of the $\operatorname{ReQU}$ activation in neural networks was observed to reduce the quasi-Gibbs phenomenon compared  to smoother activation functions.  

The present study was restricted to problems without singularities, as these introduce additional analytical and numerical challenges. We leave as future work incorporating prior information about the singularity into the approximation space, for example, through tailored architectures following the ideas in \cite{taylor2025regularity}. In the experiments reported in this work, the numerical integration was performed on uniform  meshes. The use of adaptive grids for more accurate integration is another interesting direction for future investigation.

\section*{Acknowledgments}
Alejandro Duque-Salazar is supported by the University of the Basque Country (UPV/EHU) through a predoctoral contract for research training (Personal Investigador en Formación, PIF), under the 2024 call for the recruitment of research staff in training. Paulina Sep\'{u}lveda is supported by the grant N. FOVI240119 (Fomento a la Vinculación Internacional para instituciones de Investigación), ANID Chile. Carlos Uriarte is supported by the Postdoctoral Program for the Improvement of Doctoral Research Personnel of the Basque Government (Ref. No. POS-2024-1-0004) and by the Research Projects PID2023-146678OB-I00 and PID2023-146668OA-I00, both funded by MICIU/AEI /10.13039/501100011033. Jamie M. Taylor has received funding from the following Research Projects: PID2023-146678OB-I00 funded by MICIU/AEI /10.13039/501100011033. David Pardo has received funding from the following Research Projects/Grants: 
European Union’s Horizon Europe research and innovation programme under the Marie Sklodowska-Curie Action MSCA-DN-101119556 (IN-DEEP). 
PID2023-146678OB-I00 funded by MICIU/AEI /10.13039/501100011033 and by FEDER, EU;
BCAM Severo Ochoa accreditation of excellence CEX2021-001142-S funded by MICIU/AEI/10.13039/501100011033; 
Basque Government through the BERC 2022-2025 program;
RUL-ET(KK-2024/00086), funded by the Basque Government through ELKARTEK;
Consolidated Research Group MATHMODE (IT1866-26) of the UPV/EHU given by the Department of Education of the Basque Government; 
BCAM-IKUR-UPV/EHU, funded by the Basque Government IKUR Strategy and by the European Union NextGenerationEU/PRTR.

\clearpage

\bibliography{bib.bib}
\appendix

\section{Derivation of first eigenvalue constant for planar geometries}
\label{Appendix-poincare}

We consider the one-dimensional  problem with $\Omega = (0,1)$, $x_0\in \Omega$, and homogeneous Dirichlet boundary conditions, where
$$\kappa(x) =\left\{\begin{array}{ll}\kappa_1, & 0\leq x\leq x_0, \\
\kappa_2, &  x_0 \leq x\leq 1,
\end{array}\right.$$
with $\kappa_1,\kappa_2>0$. Since $\kappa$ is constant on each subinterval,  the eigenfunctions are given piecewise by
\[
u(x) =
\begin{cases}
A \sin\!\left(\sqrt{\frac{\lambda}{\kappa_1}}\, x\right), & 0 \le x \le x_0, \\
B \sin\!\left(\sqrt{\frac{\lambda}{\kappa_2}}\, (1-x)\right), & x_0 \le x \le 1.
\end{cases}
\]

Imposing continuity of $u$ and of the flux $-\kappa u'$ at $x=x_0$ yields
\[
A \sin\!\left(\sqrt{\frac{\lambda}{\kappa_1}}\, x_0\right)
=
B \sin\!\left(\sqrt{\frac{\lambda}{\kappa_2}}\, (1-x_0)\right),
\]
and
\[
\kappa_1 A \sqrt{\frac{\lambda}{\kappa_1}} \cos\!\left(\sqrt{\frac{\lambda}{\kappa_1}}\, x_0\right)
=
-\kappa_2 B \sqrt{\frac{\lambda}{\kappa_2}} \cos\!\left(\sqrt{\frac{\lambda}{\kappa_2}}\, (1-x_0)\right).
\]
Eliminating $A$ and $B$ leads to the transcendental equation:
$$\sqrt{\kappa_2}\tan\left(\sqrt{\frac{\lambda}{\kappa_1}} x_0\right)+
\sqrt{\kappa_1}\tan\left(\sqrt{\frac{\lambda}{\kappa_2}}(1 -x_0)\right)=0.
$$
This equation characterizes a discrete sequence of positive eigenvalues. In particular, the smallest positive eigenvalue $\lambda_1$ is uniquely defined. 

\end{document}